\documentclass[10pt]{amsart}
\usepackage[utf8]{inputenc}

\usepackage{amssymb,amsmath,amsthm}
\usepackage{mathdots}
\usepackage{xcolor}
\usepackage{enumitem}
\usepackage{comment}
\usepackage{nicematrix}
\usepackage{tikz}
\usepackage{upgreek}
\usepackage[colorlinks=true,citecolor=blue,linkcolor=blue,urlcolor=blue]{hyperref}

\newtheorem{thm}{Theorem}[section]
\newtheorem{cor}[thm]{Corollary}
\newtheorem{lem}[thm]{Lemma}
\newtheorem{prop}[thm]{Proposition}

\theoremstyle{definition}
\newtheorem{rem}[thm]{Remark}
\newtheorem{ques}[thm]{Question}
\newtheorem{prob}[thm]{Problem}
\newtheorem{conj}[thm]{Conjecture}
\newtheorem{exam}[thm]{Example}

\newcommand{\sgn}{\operatorname{sgn}}

\newcommand{\C}{\mathbb{C}}
\newcommand{\N}{\mathbb{N}}
\newcommand{\Z}{\mathbb{Z}}
\newcommand{\Q}{\mathbb{Q}}

\newcommand{\cal}{\mathcal}

\title{Hankel determinants of weighted binary sums of digits}
\author{Bartosz Sobolewski and  Maciej Ulas}

\keywords{sum of digits function, Hankel determinant, root of unity}

\thanks{The research of the authors was supported by a grant of the National Science Centre (NCN), Poland, no. UMO-2019/34/E/ST1/00094}
\subjclass[2020]{Primary 11B85; Secondary 05A15, 11B83, 15B05}

\begin{document}

\begin{abstract}
Let $s_\mathbf{w}$ be the weighted binary sum-of-digits function associated with an arbitrary sequence of complex weights $\mathbf{w}=(w_j)_{j\geq 0}$.
We investigate Hankel determinants $\mathcal{H}_\mathbf{w}(n) = \det [s_{\mathbf{w}}(i+j)]_{0\leq i,j<n}$ and derive a general recursion that allows us to effectively compute $\mathcal{H}_\mathbf{w}(n)$ for all $n$. Applying it to the ordinary binary sum-of-digits, that is, $w_j=1$, we express $\mathcal{H}_\mathbf{w}(n)$ in a closed form for several sequences of indices, including the remarkably simple
$$
\mathcal{H}_\mathbf{w}(\lceil 2^{k+2}/3\rceil)=
(-1)^{\frac{(k+2)(k+3)}{2}}(k+1).
$$
This yields an infinite family of explicit evaluations, giving a partial solution to a problem posed by Allouche and Shallit.
Moreover, we closely study the specialization $w_j=t^j$, where the determinants become polynomials in $t$, and investigate their vanishing. For $t=2\zeta$, where $\zeta$ is a root of unity, we show that the determinants vanish on a large structured set of indices, while the complementary is sparse but infinite. 

In addition to $\mathcal{H}_\mathbf{w}(n)$, we consider Hankel determinants associated with the first difference of $s_{\mathbf{w}}$, obtaining an explicit product formula. This generalizes the results by Fokkink, Kraaikamp, and Shallit concerning Hankel determinants for the period-doubling sequence.
\end{abstract}

\maketitle

\section{Introduction} \label{sec:Intro}
By the $n$th
\textit{Hankel matrix} of a sequence $(a_{n})_{n \geq 0}$, we mean a matrix of the form
\[
H(n)=\bigl[a_{i+j}\bigr]_{0\le i,j<n}.
\]
Hankel matrices and the corresponding \textit{Hankel determinants}
$\det H(n)$ occur naturally in several classical contexts, including moment problems, continued fractions, Pad\'{e} approximation, orthogonal polynomials, automata theory, and Diophantine approximation.  For instance, a real sequence $(a_n)_{n\geq 0}$ is the moment sequence of a positive Borel measure $\mu$ on the real line if and only if all Hankel matrices are positive semi-definite. This is a solution of the classical Hamburger moment problem on the real line. Moreover, Hankel determinants control the existence and norms of the associated monic orthogonal polynomials. Thus, in this classical setting, the sequence of Hankel determinants encodes important
structural information about the underlying moment sequence. Another application of Hankel determinants was developed by Bugeaud, Han, Wen, and Yao~\cite{BHWY}, who used them as an effective method in the study of irrationality exponents of automatic and related numbers. Their approach uses Pad\'{e} approximants constructed from generating functions, with the
non-vanishing of suitable Hankel determinants guaranteeing approximants of the required order. In this way, Hankel determinants connect the combinatorial structure of automatic sequences with sharp Diophantine approximation results, including cases where direct control of the continued fraction expansion is
unavailable.

In the present paper, we pursue the line of research concerning the evaluation of Hankel determinants associated with digital and automatic sequences, which was initiated by the work of Allouche, Peyri\`{e}re, Wen and Wen~\cite{APWW}. They proved non-vanishing of Hankel determinants associated with the Thue--Morse sequence, as well as the $2$-automaticity of a related two-dimensional sequence of determinants (reduced modulo $2$). The former result was later proved using combinatorial methods by Bugeaud and Han~\cite{BH}, while the latter was extended by Guo and Wen~\cite{GW} to $k$th order differences of the Thue--Morse sequence. 

Han~\cite{H1} developed ``Hankel determinant calculus'', which supplies explicit evaluation techniques (modulo a suitable integer) for several classes of automatic and Thue--Morse-like sequences, and later introduced the notion of \emph{Hankel continued fractions}~\cite{H2}. The latter work connects the sequence of Hankel determinants of a formal power series with specific continued fraction expansions and yields, in many situations, periodicity and algebraic descriptions of the associated determinants. These methods have become useful
tools for handling Hankel determinants of sequences satisfying self-similar functional equations or substitution rules. Another important structural result was obtained by Hu and Han~\cite{HH}, who proved that for a large class of $\pm1$-valued automatic sequences defined through infinite product representations, the Hankel determinant sequence reduced modulo~$2$ is itself automatic. Thus, despite the complexity of computing the exact values of Hankel determinants, the resulting sequence retains the underlying regularity of the original automatic sequence. This phenomenon supports the viewpoint that Hankel determinants often reflect the intrinsic automaton-theoretic structure.
Beyond the setting of automatic sequences, we note that the classical determinant calculus developed by Krattenthaler~\cite{K1,K2} is a valuable computational tool.

Exact evaluations of the determinants $\det H(n)$ are rare in this area, as most results focus on their behavior modulo an integer. A notable exception is the work of Fokkink, Kraaikamp, and Shallit~\cite{FKS}. They computed closed expressions for the Hankel determinants of the period-doubling sequence, revealing a link to Jacobsthal numbers and obtaining the precise recursive structure underlying the determinants. 

\subsection{The present work}

The fundamental goal of the present study is to proceed in this direction of exactly evaluating Hankel determinants for digital sequences.
Our starting point is the following problem found in the monograph by Allouche and Shallit \cite[p.\ 208, Problem 4]{AS} (worded slightly differently).

\begin{prob} \label{prob:AS03}
    Find a simple closed form for the determinants of Hankel matrices given by
    $H(n)=[s(i+j)]_{0 \leq i,j < n}$, where $s$ is the binary sum of digits     function.
\end{prob}

The sequence $(\det H(n))_{n \geq 1}$ is A056886 in the OEIS \cite{OEIS} and begins with
$$
0, -1, 2, 5, -8, 3, 8, 141, -204, 11, -4, -11, -30, 99, 1836, 164997, \ldots.
$$
A natural approach here is to establish a recursion involving $\det H(n)$, using the self-similarity of the sum-of-digits function. 
However, when performing matrix operations to reduce the size of the determinant, it becomes apparent that the contribution of digits at certain positions may be perturbed. Therefore, working with just the ordinary sum of digits becomes too restrictive, and it is (more or less) necessary to extend the scope to a more general family of functions, namely weighted binary sums of digits. More precisely, let $\mathbf{w}=(w_j)_{j \geq 0}$ be a sequence of complex weights. Then for a number $n \in \N$  with binary expansion $n = 2^l \varepsilon_l + \cdots + 2 \varepsilon_1 + \varepsilon_0$, where $\varepsilon_i \in \{0,1\}$, its weighted binary sum of digits $s_{\mathbf{w}}(n)$ is defined by
$$ s_{\mathbf{w}}(n) := w_l \varepsilon_l + \cdots + w_1 \varepsilon_1 + w_0 \varepsilon_0. $$
In particular, if all weights are equal to $1$, then $s_{\mathbf{w}}$ is the usual binary sum-of-digits function. For $n \geq 1$ we define the associated Hankel matrices
$$ H_{\mathbf{w}}(n) := [s_{\mathbf{w}}(i+j)]_{0 \leq i,j < n} $$
and determinants
$$ \mathcal{H}_{\mathbf{w}}(n) := \det  H_{\mathbf{w}}(n) .$$

The central result of this paper (Theorem \ref{thm:det_H_recurrence}) establishes a set of recursive formulas that allow us to effectively calculate exact values of $\mathcal{H}_\mathbf{w}(n)$ for all $n$. This already provides a partial solution to Problem \ref{prob:AS03} in a more general setting. We explore various consequences of this theorem, ranging from a general structural description of the determinants $\mathcal{H}_\mathbf{w}(n)$ to a detailed analysis of their behavior for special weights and indices. 

Although we do not obtain a single closed form for arbitrary $n$, in the case of the ordinary sum-of-digits function we can explicitly evaluate $\det H(n)$  for several infinite families of indices such as $n=n_k:=\lceil 2^{k+2}/3\rceil$, $n=2^k$, and $n=2^k\pm 1$. Most interestingly, in the first case a particularly simple formula holds:
\begin{equation} \label{eq:simple_expression}
    \det H(n_k) =  (-1)^{\frac{(k+2)(k+3)}{2}} (k+1).
\end{equation}

The behavior of $\mathcal{H}_{\mathbf{w}}(n)$ for polynomial weights $w_j=t^j$ turns out to be quite interesting as well. To distinguish this case from the generic one, for $n = 2^l \varepsilon_l + \cdots + 2 \varepsilon_1 + \varepsilon_0$ instead of $s_{\mathbf{w}}(n)$ we write $S(n,t)$, that is,
$$
S(n,t):=\sum_{j=0}^{l}\varepsilon_j t^j.
$$
We use the following special notation for related Hankel matrices and determinants:
\begin{align*}
    H(n,t) &:=[S(i+j,t)]_{0\leq i,j<n}, \\
    \mathcal{H}(n,t) &:=\det H(n,t).
\end{align*}
As a consequence of our main theorem, in Theorem \ref{thm:nice_formula} we obtain an explicit polynomial formula 
\begin{equation} \label{eq:polynomial_simple_expression}
    \mathcal{H}(n_k,t)=(-1)^{\frac{(k+1)(k+2)}{2}}t^{\alpha_k}(t-2)^{n_k-2}(1+t+\cdots+t^k),
\end{equation}
for certain $\alpha_k \in \N$, which directly implies \eqref{eq:simple_expression}. Other applications of Theorem \ref{thm:det_H_recurrence} include the determination of signs of $\det H(n)$ and the degrees $ \deg_t \mathcal{H}(n,t)$. 

A significant part of the paper is devoted to the investigation of zero sets of $\mathcal{H}(n,t)$, which is a natural problem in view of the earlier discussion. One of the main results is Theorem~\ref{thm:H_zero},
which shows that for every root of unity $\zeta$ of order $d\geq 2$, the determinants
$\mathcal{H}(n,2\zeta)$ vanish on long intervals centered at multiples of large powers of $2$. This is complemented by Theorem~\ref{thm:H_zero2}, which gives quantitative bounds for the number of exceptional indices $n$ for which the corresponding determinant does not vanish, in particular implying the sparsity of these sets. On the other hand, we exhibit an infinite set of indices, different from $n_k$, such that  $\mathcal{H}(n,2\zeta) \neq 0$ (Theorem \ref{thm:H_nonzero}).

Recursive formulas for $H_{\mathbf{w}}(n)$ involve another sequence of determinants, closely related to Hankel determinants $\mathcal{G}_{\mathbf{w}}(n)$ for the first-order difference
$$
d_{\mathbf{w}}(n) := s_{\mathbf{w}}(n+1) - s_{\mathbf{w}}(n)
$$
Despite playing an auxiliary role, these determinants turn out to be quite interesting in their own right.
In Theorem~\ref{thm:det_G} we derive a closed product form for $\mathcal{G}_{\mathbf{w}}(n)$, which is controlled by the run lengths in the binary expansion of $n$.
As a special case, by choosing weights to be Jacobsthal numbers, we recover the result by Fokkink, Kraaikamp and Shallit \cite[Theorem 2]{FKS} concerning Hankel determinants for the period-doubling sequence and its bit-wise negation. We note that there appears to be a sign error in its original statement --- a correct version is given in Proposition \ref{prop:G_period_doubling}. For the polynomial specialization $w_j=t^j$, in Corollary~\ref{cor:det_G(n,t)} we obtain an elegant formula for $\mathcal{G}_{\mathbf{w}}(n)$, giving a full description of the roots and their multiplicities.

The structure of the paper is as follows. Section \ref{sec:Notation} introduces the notation and conventions commonly used throughout the paper. In Section~\ref{sec:G} we focus on the determinants
$\mathcal{G}_{\mathbf{w}}(n)$ associated with the first-order difference $d_{\mathbf{w}}$. We provide the general product formula and the applications to special cases, including the period-doubling sequence and polynomial weights $w_j=t^j$.
In Section~\ref{sec:H} we turn to the determinants $\mathcal{H}_{\mathbf{w}}(n)$, associated with the
weighted binary sum-of-digits function. The main result is the recursive relation for $\mathcal{H}_{\mathbf{w}}(n)$, while the remainder of the section is devoted to its various implications.
Section~\ref{sec:vanishing} is focused on vanishing and non-vanishing phenomena for the polynomials $\mathcal{H}(n,t)$.
Finally, in Section \ref{sec:Questions} we collect several questions and problems suggested by the results obtained in the paper.

\section{Notation and conventions} \label{sec:Notation}
As we have already mentioned, we use the convention $\N = \{0,1,2,...\}$. Moreover, we let $\N_+ = \{1,2,\ldots  \}$ and, more generally, $\N_{\geq k} = \{k,k+1,k+2,\cdots \}$ for $k \in \N$.
For $n \in \N_+$ we will use the following notation for vectors and matrices that appear often throughout the paper:
\begin{itemize}
    \item $\mathbf{0}_n$ --- length $n$ column vector with all entries $0$;
    \item $\mathbf{1}_n$ --- length $n$ column vector with all entries $1$;
    \item  $\mathbf{0}$ --- matrix with all entries $0$ of any size (implied by the context);
    \item $J_n$ --- $n \times n$ matrix with all entries $1$, 
    \item $K_n$ --- $n \times n$ square matrix with $1$s below the antidiagonal and $0$s elsewhere;
    \item $L_n$ --- $n \times n$ square matrix with $1$s on the antidiagonal and $0$s elsewhere;
    \item $M_n$ --- $n \times n$ square matrix with $1$s on and below the antidiagonal and $0$s elsewhere.
\end{itemize}
More explicitly, the last three matrices are \vspace{1.5em}
$$
K_n = \begin{bmatrix}
    0 & 0 & \cdots   & 0 \\
    0 & &   \iddots  & 1 \\
    \vdots  & \iddots & \iddots & \vdots \\
    0 & 1 & \cdots & 1    
\end{bmatrix},
\quad
L_n = \begin{bmatrix}
    0 & \cdots  & 0 & 1 \\
    \vdots & & \iddots & 0 \\
    0 & \iddots  &  & \vdots \\
    1 & 0 & \cdots & 0
\end{bmatrix}, \quad
M_n = \begin{bmatrix}
    0 & 0 & \cdots   & 1 \\
    0 & &   \iddots  & 1 \\
    \vdots  & \iddots & \iddots & \vdots \\
    1 & 1 & \cdots & 1    
\end{bmatrix}.
$$
It is useful to note that
$$ \det L_n = \det M_n = (-1)^\frac{n(n-1)}{2} = \begin{cases}
    1 &\text{if } n \equiv 0,1 \pmod{4}, \\
    -1 &\text{if } n \equiv 2,3 \pmod{4}.
\end{cases} $$
We also use the standard notation $\otimes$ for the Kronecker product of matrices (see for example \cite[Chapter 20]{LM}). The property we are going to use the most is
$$ (A \otimes B)(C \otimes D) = (AB) \otimes (CD) $$
for matrices $A,B,C,D$ of appropriate size.

Below we list several further conventions:
\begin{itemize}
    \item the rows and columns of matrices are indexed from $0$;
    \item in a block matrix, non-bold $0$ or $1$ means a single entry;
    \item when writing block matrices, whose block sizes are symmetric along the diagonal, we sometimes suppress the indices of the matrices and write the heights of the blocks to the right of the matrix, for example:
    $$
    \begin{bNiceArray}{cccccc}
        0 & 0 & 0 & 0 & 0 & 0 \\
        0 & 0 & 0 & 1 & 0 & 0 \\
        0 & 0 & 1 & 1 & 0 & 1 \\
        0 & 1 & 1 & 1 & 1 & 0 \\
        0 & 1 & 0 & 0 & 1 & 1 \\
        1 & 1 & 0 & 0 & 1 & 1 
   \CodeAfter
\tikz \draw [dashed] (5-|1) -- (5-|last);
\tikz \draw [dashed] (3-|5) -- (3-|last);
\tikz \draw [dashed] (1-|5) -- (last-|5);
\tikz \draw [dashed] (5-|3) -- (last-|3);
    \end{bNiceArray} =
        \begin{NiceArray}{cccl}[left-margin=.6em,right-margin=.4em]
    \Block[borders={bottom,right,tikz=dashed}]{2-2}<\Large> K &  & \mathbf{0} & 2 \\  &  & L & 2 \\ M & \mathbf{0} & J &2
    \CodeAfter
\SubMatrix[{1-1}{3-3}]
\end{NiceArray};
$$
    \item when considering $H_{\mathbf{w}}(n), G_{\mathbf{w}}(n)$ for $2^k < n \leq 2^{k+1}$, we sometimes specify $\mathbf{w}$ as the finite sequence of weights that actually appear in the matrices, that is, $\mathbf{w}= (w_0,w_1,\ldots,w_{k+1})$.
\end{itemize}

\section{Formulas for the determinants $\mathcal{G}_{\mathbf{w}}(n)$} \label{sec:G}

In this section, we consider Hankel matrices and determinants associated with the first-difference sequence $(d_{\mathbf{w}}(n))_{n \geq 0}$, that is,
\begin{align*}
    G_{\mathbf{w}}(n) &:= [d_{\mathbf{w}}(i+j)]_{0\leq i,j < n}, \\
\mathcal{G}_{\mathbf{w}}(n) &:= \det G_{\mathbf{w}}(n).
\end{align*}
We derive general formulas for $\mathcal{G}_{\mathbf{w}}(n)$ and later apply them to special weights $\mathbf{w}$. In particular, we obtain as a corollary the aforementioned result from \cite{FKS} on Hankel determinants of the period-doubling sequence. 

We also consider related $n \times n$ matrices $\widetilde{G}_{\mathbf{w}}(n)$ and their determinants:
\begin{align*}
  \widetilde{G}_{\mathbf{w}}(n) &:= 
\begin{bmatrix}
    d_{\mathbf{w}}(0) &  \cdots & d_{\mathbf{w}}(n-2) & 1 \\
    \vdots &  & \vdots & \vdots \\
    d_{\mathbf{w}}(n-1) &  \cdots & d_{\mathbf{w}}(2n-3) & 1 \\
\end{bmatrix}, \\
\widetilde{\mathcal{G}}_{\mathbf{w}}(n) &:= \det \widetilde{G}_{\mathbf{w}}(n).
\end{align*}
In other words, $\widetilde{G}_{\mathbf{w}}(n)$ is obtained from $G_{\mathbf{w}}(n)$ by replacing all entries in the rightmost column with $1$s. Their determinants will appear in Section \ref{sec:H} in the recursion for $\mathcal{H}_{\mathbf{w}}(n)$.

\subsection{Recurrence relations}
To begin, we express in a more explicit way the entries $d_{\mathbf{w}}(m)$. If we write $m+1 =  2^{k+1} l + 2^k$ for some $k,l \in \N$, then
$$ d_{\mathbf{w}}(m) = s_{\mathbf{w}}(2^{k+1} l + 2^k)- s_{\mathbf{w}} \left(2^{k+1} l + \sum_{j=0}^{k-1} 2^j \right) = w_k - \sum_{j=0}^{k-1} w_j.$$
We can rewrite this equality in terms of the sequence $\mathbf{u}=(u_n)_{n \geq 0}$, defined by 
\begin{align*}
    u_0 &:= w_0, \\
u_n &:= w_n - 2w_{n-1}, \quad n \geq 1.
\end{align*} 
Then we have the expression
\begin{equation} \label{eq:d_w}
    d_{\mathbf{w}}(m) = \sum_{i=0}^{\nu_2(m+1)} u_i,
\end{equation}
where $\nu_2$ denotes the $2$-adic valuation.
For example, we have
$$ G_{\mathbf{w}}(4) = \begin{bmatrix}
    u_0 & u_0+u_1 & u_0 & u_0+u_1+u_2   \\
    u_0+u_1 & u_0 &  u_0+u_1+u_2 & u_0   \\
    u_0 & u_0+u_1+u_2& u_0 &  u_0+u_1  \\
   u_0+u_1+u_2 & u_0 &  u_0+u_1 & u_0   \\
\end{bmatrix}.  $$

Conversely, if we start with $\mathbf{u}$ and define $d_{\mathbf{w}}(m)$ by \eqref{eq:d_w}, then the sequence of weights $\mathbf{w}$ can be recovered using the relations
\begin{align*}
w_0 &= u_0, \\
w_n &= \sum_{j=0}^n 2^{n-j} u_j, \quad n \geq 1.
\end{align*} 
As a first auxiliary result, we obtain a recurrence relation for the matrices $G_{\mathbf{w}}(n)$. 

\begin{prop} \label{prop:G_recursion}
For all $k \in \N, n \in \N_+$ we have
\begin{align*}
    G_{\mathbf{w}}(2^k n) = J_n \otimes G_{\mathbf{w}}(2^k) + G_{\mathbf{w}'}(n) \otimes L_{2^k}, 
\end{align*}
    where $\mathbf{w}'=(w_j')_{j \geq 0}$ depends only on $k$ and is defined by
    $$  w'_j = w_{j+k} - 2^j w_k.$$
\end{prop}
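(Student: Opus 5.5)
The plan is to verify the identity entry by entry, using the description \eqref{eq:d_w} of $d_{\mathbf{w}}$ through the sequence $\mathbf{u}$. Index the rows and columns of $G_{\mathbf{w}}(2^k n)$ as $i = 2^k a + r$ and $j = 2^k b + c$, where $0 \le a,b < n$ and $0 \le r,c < 2^k$. With this indexing the Kronecker products have simple entries:
\begin{itemize}
\item the $(i,j)$ entry of $J_n \otimes G_{\mathbf{w}}(2^k)$ is $d_{\mathbf{w}}(r+c)$;
\item the $(i,j)$ entry of $G_{\mathbf{w}'}(n) \otimes L_{2^k}$ is $d_{\mathbf{w}'}(a+b)$ if $r+c = 2^k-1$, and $0$ otherwise.
\end{itemize}
The left-hand side has $(i,j)$ entry $d_{\mathbf{w}}(m)$ with $m = 2^k(a+b) + r + c$. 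By \eqref{eq:d_w}, $d_{\mathbf{w}}(m)$ depends only on $\nu_2(m+1)$, so it suffices to compare $2$-adic valuations.

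First I would compute the sequence $\mathbf{u}'$ attached to $\mathbf{w}'$. We have $u'_0 = w'_0 = w_k - w_k = 0$. For $j \ge 1$,
$$u'_j = w_{j+k} - 2^j w_k - 2w_{j+k-1} + 2\cdot 2^{j-1} w_k = u_{j+k}.$$
Hence, by \eqref{eq:d_w} applied to $\mathbf{w}'$,
$$d_{\mathbf{w}'}(x) = \sum_{i=k+1}^{k+\nu_2(x+1)} u_i.$$

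Next I split into cases according to the value of $r+c$, which lies in $[0, 2^{k+1}-2]$.
\begin{itemize}
\item If $r+c+1 < 2^k$, then $\nu_2(m+1) = \nu_2(r+c+1) < k$, since adding a multiple of $2^k$ does not change a valuation smaller than $k$. Thus $d_{\mathbf{w}}(m) = d_{\mathbf{w}}(r+c)$, and the $L_{2^k}$ term vanishes here.
\item If $r+c+1 = 2^k + s$ with $0 < s < 2^k$, then the same reasoning gives $\nu_2(m+1) = \nu_2(s) = \nu_2(r+c+1)$. Again the two sides agree.
\item If $r+c = 2^k - 1$, then $m+1 = 2^k(a+b+1)$ and $\nu_2(m+1) = k + \nu_2(a+b+1)$. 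Therefore
$$d_{\mathbf{w}}(m) = \sum_{i=0}^{k} u_i + \sum_{i=k+1}^{k+\nu_2(a+b+1)} u_i = d_{\mathbf{w}}(2^k-1) + d_{\mathbf{w}'}(a+b).$$
This is exactly the sum of the two Kronecker entries.
\end{itemize}

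The only step that needs care is the shift identity $u'_j = u_{j+k}$ together with $u'_0 = 0$. This is precisely what the definition $w'_j = w_{j+k} - 2^j w_k$ is designed to achieve, so I do not expect a genuine obstacle; the rest is bookkeeping of valuations.
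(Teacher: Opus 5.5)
Your proposal is correct and follows essentially the same route as the paper. Both arguments compare the matrices entrywise using \eqref{eq:d_w}, both observe that $\nu_2(m+1)=\nu_2(r+c+1)$ unless $r+c=2^k-1$, and both identify $\mathbf{u}'=(0,u_{k+1},u_{k+2},\ldots)$. Your explicit check that $u'_j=u_{j+k}$ fills in a step the paper only asserts.
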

\begin{proof}
Write $G_{\mathbf{w}}(2^k n)$ as an $n \times n$ block matrix:
$$ G_{\mathbf{w}}(2^k n) = \begin{bmatrix}
    G_0 & G_1 &  \cdots & G_{n-1} \\
    G_1 & G_2 &  \cdots & G_{n} \\
    \vdots & \vdots & & \vdots \\
    G_{n-1} & G_{n} &\cdots & G_{2n-2} 
\end{bmatrix}, $$
where the blocks $G_l$ have size $2^k \times 2^k$. Consider the entry at position $(i,j)$ in $G_l$, that is, $d_{\mathbf{w}}(2^k l + i+j)$. If $i + j \neq 2^k-1$, then $\nu_2(2^kl+i+j+1)= \nu_2(i+j+1)$, so by \eqref{eq:d_w}  we get $d_{\mathbf{w}}(2^k l + i+j) =  d_{\mathbf{w}}(i+j)$. 
Otherwise, if $i + j  = 2^k-1$, then
$$ d_{\mathbf{w}}(2^k l + i+j) =\sum_{m=0}^{k + \nu_2(l+1)} u_m = d_{\mathbf{w}}(2^k-1) + \sum_{m=1}^{\nu_2(l+1)} u_{k+m}  = d_{\mathbf{w}}(i+j) + d_{\mathbf{w}'}(l),  $$ 
since $\mathbf{w}'$ corresponds to $\mathbf{u}' = (0,u_{k+1}, u_{k+2}, \ldots)$.

This means that
$$   G_l =  G_{\mathbf{w}}(2^k) +  d_{\mathbf{w}'}(l) L_{2^k}, $$
and the result follows.    
\end{proof}

We now apply this identity to obtain a recursion for $\mathcal{G}_{\mathbf{w}}(n)$ and $\widetilde{\mathcal{G}}_{\mathbf{w}}(n)$. This is a generalization of \cite[Lemmas 8, 9]{FKS}. 

\begin{prop} \label{prop:det_G_recurrence}
Let  $n \in \N_{\geq 2}$ and $k \in \N$ be such that $2^k < n \leq 2^{k+1}$.
The following recursive formulas hold.
\begin{enumerate}[label=(\alph*)]
    \item If $2^k < n \leq 3 \cdot 2^{k-1}$, then
    \begin{align*}
        \mathcal{G}_{\mathbf{w}}(n) &= (-1)^n (w_{k+1}-2w_k)^{2n-2^{k+1}} \mathcal{G}_{\mathbf{w}}(2^{k+1}-n), \\
        \widetilde{\mathcal{G}}_{\mathbf{w}}(n) &= (-1)^n (w_{k+1}-2w_k)^{2n-2^{k+1}-1} \widetilde{\mathcal{G}}_{\mathbf{w}}(2^{k+1}-n+1).
    \end{align*}  
    \item If $3 \cdot 2^{k-1} < n \leq 2^{k+1}$, then
    \begin{align*}
        \mathcal{G}_{\mathbf{w}}(n) &= \varepsilon(n) 2^{2n-3 \cdot 2^k}(w_{k+1}-2w_k)^{2^k} \mathcal{G}_{\mathbf{w}'}(n-2^k), \\
        \widetilde{\mathcal{G}}_{\mathbf{w}}(n) &= \varepsilon(n) 2^{2n-3 \cdot 2^k-1}(w_{k+1}-2w_k)^{2^k} \widetilde{\mathcal{G}}_{\mathbf{w}'}(n-2^k),
    \end{align*}
     where $\mathbf{w}' = (w_0, w_1, \ldots, w_{k-1}, \frac{1}{2} w_{k+1})$ and
$$ \varepsilon(n) = \begin{cases}                          
    1 &\text{if } n \neq 2,4, \\
    -1 &\text{if } n=2,4.
\end{cases}$$
\end{enumerate}
\end{prop}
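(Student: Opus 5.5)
The plan is to realize $G_{\mathbf{w}}(n)$ as the leading principal $n\times n$ submatrix of $G_{\mathbf{w}}(2^{k+1})$ and to apply Proposition~\ref{prop:G_recursion} with $n=2$ and exponent $k$. For that shift the weights $w'_j=w_{j+k}-2^jw_k$ correspond to $\mathbf{u}'=(0,u_{k+1},u_{k+2},\dots)$. Hence $d_{\mathbf{w}'}(0)=0$ and $d_{\mathbf{w}'}(1)=u_{k+1}=w_{k+1}-2w_k=:c$, and $d_{\mathbf{w}'}(2)=0$ as well. Writing $A=G_{\mathbf{w}}(2^k)$ and $L=L_{2^k}$, the relevant part of the Hankel array has block structure $G_0=A$, $G_1=A+cL$, $G_2=A$. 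Put $m=n-2^k$, so $1\le m\le 2^k$. Then $G_{\mathbf{w}}(n)$ is the leading $n\times n$ part of
$$\begin{bmatrix} A & A+cL\\ A+cL & A\end{bmatrix}.$$

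Next I perform the determinant-preserving operations: subtract row $i$ from row $2^k+i$ for $i<m$, and then column $j$ from column $2^k+j$. The resulting matrix has the following blocks:
\begin{itemize}
\item top-left: $A$;
\item top-right ($2^k\times m$): $c$ times the first $m$ columns of $L$, i.e. entries $c$ at positions $(2^k-1-j,j)$;
\item bottom-left: the transpose of the top-right block;
\item bottom-right ($m\times m$): $(A)-(A+cL)-(A+cL)+A$ restricted, which is $-2c$ times the leading $m\times m$ corner of $L$.
\end{itemize}
That corner has nonzero entries only when $2m-2\ge 2^k-1$, i.e.\ $m>2^{k-1}$, which is exactly the dichotomy (a)/(b).

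In case (a) the bottom-right block vanishes. The off-diagonal blocks are then permutation-like: they hit only the last $m$ rows and columns of $A$, each with a single entry $c$. A Laplace expansion along the last $m$ rows and columns therefore gives $\pm c^{2m}\det$ of $A$ with its last $m$ rows and columns removed. That minor is $G_{\mathbf{w}}(2^k-m)=G_{\mathbf{w}}(2^{k+1}-n)$, and the power is $2m=2n-2^{k+1}$. The sign comes from pairing the $m$ rows $2^k+i$ with columns $2^k-1-i$, and symmetrically for the columns. It should collapse to $(-1)^n$, or more precisely $(-1)^{m}$ squared-type factors times parity; I will check this with small cases.

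For $\widetilde{G}$ the last column is all ones. I first perform the same row operations, which turn the bottom part of that column into zeros. Then the column operations are done on all but the last column. The expansion now removes only $m-1$ of the $c$-pivot pairs on one side, leaving the truncated matrix $\widetilde{G}_{\mathbf{w}}(2^{k+1}-n+1)$ and the exponent $2m-1$.

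In case (b), which I expect to be the main obstacle, the bottom-right block contains the invertible anti-diagonal piece $-2cL_{2m-2^k}$, sitting in its lower-right $(2m-2^k)$-square. The plan is to take the Schur complement with respect to that piece together with the $c$-pivots of the off-diagonal blocks. Each of the $2m-2^k$ anti-diagonal pivots paired with the matching $c$ entries modifies $A$ by adding $c^2/(2c)=c/2$ on the antidiagonal positions $i+j=2^k-1$ of the corresponding range. The remaining $2^k-m$ rows and columns are eliminated as in case (a), contributing $c^{2(2^k-m)}$. The total power of $c$ should come out as $2^k$, and the factors $(-2)^{2m-2^k}$ together with bookkeeping yield $2^{2n-3\cdot 2^k}$.

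It remains to recognize the modified top-left $m\times m$ matrix as $G_{\mathbf{w}'}(m)$ with $\mathbf{w}'=(w_0,\dots,w_{k-1},\tfrac12 w_{k+1})$. The point is that replacing $w_k$ by $\tfrac12w_{k+1}$ changes $u_k$ to $u_k+\tfrac c2$ and affects only the entries $d(2^k-1)$, i.e.\ the antidiagonal $i+j=2^k-1$. I will verify this via \eqref{eq:d_w}. The tilde version follows identically, with one fewer factor of $2$ coming from the ones column. Finally, the exceptional sign $\varepsilon(n)$ for $n=2,4$ I will settle by direct computation of the small cases ($k=0,1$, where $m=2^k$ makes the generic sign argument degenerate).
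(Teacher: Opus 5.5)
This is essentially the paper's proof. You use Proposition~\ref{prop:G_recursion} with $n=2$, the same block subtraction, and Laplace expansion along the $cL$ pivots. In case (b), your Schur complement with respect to $-2cL_r$ (where $r=2m-2^k$) is exactly the paper's step of adding half of the last block row to the middle one, which effects $w_k\mapsto\tfrac12 w_{k+1}$.

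Two loose ends remain.

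\textbf{Signs.} These need no small-case checking. They are $(-1)^m=(-1)^n$ in (a) and $(-1)^m\det L_r$ in (b). The latter equals $1$ for $k\ge 2$ and $-1$ for $n=2,4$, so those two values are not degenerate cases and need no separate treatment.

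\textbf{Tilde version of (b).} It does not follow "identically". In $\widetilde{G}_{\mathbf{w}}(n)$ the last column of the block $-2cL_r$ has been replaced by the column of ones, which vanishes in the bottom rows, so that block is singular and you cannot take a Schur complement with respect to it. Instead, eliminate with its $(r-1)\times(r-1)$ anti-diagonal part. The leftover row then has a single nonzero entry, $c$ in column $m-1$. Expanding along it supplies the missing factor of $c$, keeping the total power at $c^{2^k}$ with only $2^{r-1}$. It also deletes column $m-1$, which turns $G_{\mathbf{w}'}(m)$ into $\widetilde{G}_{\mathbf{w}'}(m)$.
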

\begin{proof}
We carry out the proof in detail for $\mathcal{G}_\mathbf{w}(n)$ and later describe the (small) changes that occur in the case of  $\widetilde{\mathcal{G}}_\mathbf{w}(n)$.
Proposition \ref{prop:G_recursion} applied to $n=2$ yields
$$ G_{\mathbf{w}}(2^{k+1}) = 
\begin{bmatrix}
    1 & 1 \\ 1 & 1
\end{bmatrix} \otimes G_{\mathbf{w}}(2^k) + 
\begin{bmatrix}
    0 & u_{k+1} \\ u_{k+1} & 0
\end{bmatrix} \otimes L_{2^k}. $$
 By subtracting the top row of blocks from the bottom one, and then subtracting the left column from the right one, we get the following block matrix:
$$
\begin{bmatrix}
    1&0\\0&0
\end{bmatrix} \otimes G_{\mathbf{w}}(2^k)
+ u_{k+1}\begin{bmatrix}
    0&1\\1&-2
\end{bmatrix} \otimes L_{2^k}.
$$
Its $n \times n$ (non-block) submatrix lying in the upper left corner can be obtained from $G_{\mathbf{w}}(n)$ by corresponding elementary operations.

In case (a), this submatrix can be written in block form
\begin{equation} \label{eq:G_a}
\begin{NiceArray}{cccl}[right-margin=.4em]
    \Block[borders={bottom,right,tikz=dashed}]{2-2}<\Large>{G_{\mathbf{w}}(2^k)} &  & \mathbf{0} & 2^{k+1}-n \\  &  & u_{k+1} L &n-2^k \\ \mathbf{0} & u_{k+1} L & \mathbf{0} &n-2^k
    \CodeAfter
\SubMatrix[{1-1}{3-3}]
\end{NiceArray},
\end{equation}
where we follow the convention of writing block heights to the right of a symmetric block matrix. 
Applying the generalized Laplace expansion with respect to the bottom row and then the rightmost column of blocks, we get
$$ \mathcal{G}_\mathbf{w}(n) = (-1)^{(n-2^k)(2^k+1)} u_{k+1}^{2n-2^{k+1}} \mathcal{G}_\mathbf{w}(2^{k+1}-n)  (\det L_{n-2^k})^2,  $$
and the formula in (a) follows quickly.

In case (b), for $n=2,4$, the desired formula can be verified directly. Otherwise, we can assume $k \geq 2$. The considered $n \times n$ submatrix contains nonzero elements in the bottom right block and can be written in the form
\begin{equation} \label{eq:G_b}
\begin{NiceArray}{cccccl}[right-margin=.4em]
    \Block[borders={bottom,right,tikz=dashed}]{3-3}<\Large>{G_{\mathbf{w}}(2^k)} &  &  & \mathbf{0} & \mathbf{0} & 2^{k+1}-n \\ &  &  & \mathbf{0} & u_{k+1}L & 2n-3 \cdot 2^k \\  & & & u_{k+1}L & \mathbf{0} & 2^{k+1}-n \\ \mathbf{0} & \mathbf{0} & u_{k+1}L  & \mathbf{0} & \mathbf{0} & 2^{k+1}-n \\ \mathbf{0} & u_{k+1}L & \mathbf{0} & \mathbf{0} & -2u_{k+1}L & 2n-3 \cdot 2^k
   \CodeAfter
\SubMatrix[{1-1}{5-5}]
\end{NiceArray}.
\end{equation}
Expanding with respect to the penultimate row and then column of blocks, we get
$$
    \mathcal{G}_{\mathbf{w}}(n) 
    =  (-1)^n u_{k+1}^{2^{k+2}-2n} \det 
\begin{NiceArray}{ccccl}[left-margin=.4em,right-margin=.4em]
    \Block[borders={bottom,right,tikz=dashed}]{2-3}{G_{\mathbf{w}}(n-2^k)}  & & &  \mathbf{0} & 2^{k+1}-n \\  & & & u_{k+1}L & 2n-3 \cdot 2^k \\  \mathbf{0} &  & u_{k+1}L & -2u_{k+1}L & 2n-3 \cdot 2^k
   \CodeAfter
\SubMatrix[{1-1}{3-4}]
\end{NiceArray}.  
$$
Now, we add the bottom row of blocks multiplied by $1/2$ to the middle row. But then $\frac{1}{2} u_{k+1}$ is added precisely at positions $(i,j)$ in $G_{\mathbf{w}}(n-2^k)$ such that $i+j=2^k-1$, that is, positions where $u_k$ occurs. This operation thus replaces $u_k$ with $u_k' = u_k + \frac{1}{2}u_{k+1}$, or equivalently, replaces $w_k$ with $w_k' = \frac{1}{2} w_{k+1}$. We thus get
$$ \mathcal{G}_\mathbf{w}(n) = (-1)^n u_{k+1}^{2^{k+2}-2n} \mathcal{G}_\mathbf{w'}(n-2^k) \det (-2u_{k+1}L),$$
and (b) follows.

The formulas for $\widetilde{\mathcal{G}}_{\mathbf{w}}(n)$ are proved very similarly, by analogous operations on rows and columns. The only significant difference is that we leave the rightmost (non-block) column, consisting of $1$s, intact when performing operations on columns. As a result, we obtain a matrix of the form \eqref{eq:G_a} or \eqref{eq:G_b} in the case (a) or (b), respectively, where the top $2^k$ entries in the last column are replaced with $1$s, and the bottom $n-2^k$ entries with $0$s. The remaining steps are essentially the same, except when applying  generalized Laplace expansion with respect to columns, the rightmost column is excluded.
\end{proof}

\subsection{Explicit formulas}

Through repeated application of Proposition \ref{prop:det_G_recurrence}, we now obtain a closed form expression for the determinants $\mathcal{G}_{\mathbf{w}}(n)$. First, we consider the special case $n=2^k$, where said formula takes a particularly simple form.

\begin{thm} \label{thm:det_G_2^k}
    For all $k \in \N$ we have
    $$  \mathcal{G}_{\mathbf{w}}(2^{k})  = \gamma(k) \, w_k \prod_{j=1}^{k} (w_k - 2^{j} w_{k-j})^{2^{k-j}},$$
    where
    $$\gamma(k) = \begin{cases}
    1 &\text{if } k \neq 1, \\
    -1 &\text{if } k =1.
    \end{cases}$$
\end{thm}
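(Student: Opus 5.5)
Induction on $k$, using Proposition \ref{prop:det_G_recurrence}(b) with $n=2^{k+1}$ (which lies in the range $3\cdot 2^{k-1}<n\le 2^{k+1}$). In that case $2n-3\cdot 2^k = 2^k$, so the recursion reads
$$ \mathcal{G}_{\mathbf{w}}(2^{k+1}) = \varepsilon(2^{k+1})\, 2^{2^k}(w_{k+1}-2w_k)^{2^k}\, \mathcal{G}_{\mathbf{w}'}(2^k), \qquad \mathbf{w}'=(w_0,\ldots,w_{k-1},\tfrac12 w_{k+1}). $$
The base cases are $k=0$, where $G_{\mathbf{w}}(1)=[d_{\mathbf{w}}(0)]=[w_0]$, and $k=1$, where $G_{\mathbf{w}}(2)=\begin{bmatrix} w_0 & w_1-w_0\\ w_1-w_0 & w_0\end{bmatrix}$. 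The second gives determinant $w_0^2-(w_1-w_0)^2 = -w_1(w_1-2w_0)$, which matches the formula with $\gamma(1)=-1$.

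For the inductive step, I will assume the formula for $2^k$ holds for every weight sequence and apply it to $\mathbf{w}'$, whose $k$th weight is $w'_k=\tfrac12 w_{k+1}$ and whose lower weights are $w'_{k-j}=w_{k-j}$ for $j\ge1$. This gives
$$ \mathcal{G}_{\mathbf{w}'}(2^k)=\gamma(k)\,\tfrac12 w_{k+1}\prod_{j=1}^{k}\bigl(\tfrac12 w_{k+1}-2^j w_{k-j}\bigr)^{2^{k-j}} = \gamma(k)\,\tfrac12 w_{k+1}\, 2^{-(2^k-1)}\prod_{j=1}^{k}\bigl(w_{k+1}-2^{j+1}w_{k-j}\bigr)^{2^{k-j}}, $$
using $\sum_{j=1}^k 2^{k-j}=2^k-1$. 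Reindexing by $j\mapsto j+1$ turns the product into $\prod_{j=2}^{k+1}(w_{k+1}-2^{j}w_{k+1-j})^{2^{k+1-j}}$. The prefactor $(w_{k+1}-2w_k)^{2^k}$ is precisely the missing $j=1$ factor. The powers of $2$ cancel: $2^{2^k}\cdot 2^{-1}\cdot 2^{-(2^k-1)}=1$.

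It remains to check the sign. $\varepsilon(2^{k+1})=-1$ only when $2^{k+1}=2$ or $4$, i.e.\ $k=0,1$. So we need $\gamma(k+1)=\varepsilon(2^{k+1})\gamma(k)$, which holds in each case:
\begin{itemize}
\item $k=0$: $\gamma(1)=-1=(-1)\cdot 1$;
\item $k=1$: $\gamma(2)=1=(-1)(-1)$;
\item $k\ge2$: $\gamma(k+1)=1=1\cdot 1$.
\end{itemize}

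There is one point to watch. The proposition is stated for $n\ge2$ with $2^k<n$, and its $\mathbf{w}'$ involves dividing $w_{k+1}$ by $2$, so it holds over $\mathbb{C}$ for arbitrary weights. The case $n=2$ (i.e.\ $k=0$, $\mathbf{w}'=(\tfrac12 w_1)$) was verified directly in the proposition, so the induction from $k=0$ is legitimate. Alternatively, one can start the induction at $k=1$ using the direct $2\times2$ computation above. The main obstacle is only this bookkeeping of the powers of $2$ and the sign $\varepsilon$; no new matrix manipulation is needed.
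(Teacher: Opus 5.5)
Your proof is correct and follows the paper's argument: induction on $k$ via Proposition \ref{prop:det_G_recurrence}(b) at $n=2^{k+1}$, applying the hypothesis to $\mathbf{w}'$ and checking that the powers of $2$ cancel. The only difference is that the paper verifies $k=0,1,2$ directly and inducts only for $k\ge 2$, where $\varepsilon=1$. You instead start at $k=0$ and obtain $\gamma$ from $\varepsilon(2)=\varepsilon(4)=-1$, which is equally valid because the proposition covers $n=2,4$.
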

\begin{proof}
By direct computation, the formula holds for $k=0,1,2$. Now, let $k \geq 2$ and assume that the statement holds for $k$.  By Proposition \ref{prop:det_G_recurrence}(b) we have
$$  \mathcal{G}_{\mathbf{w}}(2^{k+1}) =  2^{2^k}(w_{k+1}-2w_k)^{2^k} \mathcal{G}_{\mathbf{w}'}(2^k), $$
where $\mathbf{w}' = (w_0, w_1, \ldots, w_{k-1}, \frac{1}{2} w_{k+1})$. By the inductive assumption, we get
\begin{align*}  
\mathcal{G}_{\mathbf{w}}(2^{k+1}) &=  2^{2^k}(w_{k+1}-2w_k)^{2^k} \cdot \frac{1}{2} w_{k+1} \prod_{j=1}^{k} \left(\frac{1}{2} w_{k+1} - 2^{j} w_{k-j}\right)^{2^{k-j}}  \\
&= 2^m w_{k+1} \prod_{j=1}^{k+1} \left( w_{k+1} - 2^{j} w_{k+1-j}\right)^{2^{k+1-j}},
\end{align*} 
where
$$m= 2^k -1 - \sum_{j=1}^k 2^{k-j} = 0.$$
The result follows.
\end{proof}

We move on to state the main result of this section, namely a general formula for $\mathcal{G}_{\mathbf{w}}(n)$ in terms of the binary expansion of $n$. Here, for a binary digit $a \in \{0,1\}$ by $a^l$ we mean the string consisting of $a$ repeated $l$ times.

\begin{thm} \label{thm:det_G}
Write the binary expansion of $n \in \N_+$ in the following way:
$$(n)_2= 1^{l_s} 0^{l_{s-1}} 1^{l_{s-2}} \cdots  \overline{a}^{l_3} a^{l_2} 1 0^{l_1}, $$
where $s \geq 1$, $l_1 \geq 0$, $l_i \geq 1$ for $i=2,\ldots,s$ and $a \in \{0,1\}$, $\overline{a}=1-a$.

Then
\begin{align*}
    \mathcal{G}_{\mathbf{w}}(n) = &  (-1)^\frac{n(n-1)}{2} w_{l_1} \prod_{i=1}^s \prod_{j=1}^{l_i}(w_{k_i}-2^jw_{k_i-j})^{\alpha_{i,j}},
\end{align*} 
where we set
\begin{align*}
k_i &= \begin{cases}
l_1 &\text{if } i=1, \\
    1+\sum_{j=1}^i l_i &\text{if } i \geq 2,
\end{cases} \\
\alpha_{i,j} &=  2^{k_i-j} \qquad \text{if } 1 \leq j < l_i, \\
\alpha_{i,l_i}&= \begin{cases}
    1 &\text{if } i=1, \\ 
    2^{l_1+1} &\text{if } i=2,3, \\ 
    2^{k_{i-2}+1} - \alpha_{i-1,l_{i-1}} &\text{if } i \geq 4.
    \end{cases}
\end{align*}
In particular, the exponents $\alpha_{i,j}$ are all positive, strictly decreasing with respect to $j=1,\ldots,l_i$, and satisfy
$$ 1+ \sum_{i=1}^s \sum_{j=1}^{l_i} \alpha_{i,j} = n.$$
\end{thm}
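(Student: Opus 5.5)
We argue by strong induction on $n$, peeling one binary run off the top of $(n)_2$ at each stage with Proposition \ref{prop:det_G_recurrence}. The base cases are $n=2^{l_1}$, i.e. $s=1$, which are exactly Theorem \ref{thm:det_G_2^k}. There $k_1=l_1$, $\alpha_{1,j}=2^{l_1-j}$ for $j<l_1$ and $\alpha_{1,l_1}=1$. The sign $(-1)^{n(n-1)/2}$ equals $1$ for $n=2^k\ne 2$ and $-1$ for $n=2$, which matches $\gamma(k)$. We also check $n=1,2,3$ by hand. The one-line argument for the weight count is: setting all $w_j=x$ makes every factor homogeneous, and the total degree of $\mathcal{G}_{\mathbf{w}}(n)$ is $n$ (each entry of $G$ is linear in $\mathbf{w}$), giving $1+\sum\alpha_{i,j}=n$. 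One must make sure no factor vanishes identically, which holds since $j\ge1$.

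For the inductive step let $2^k<n\le 2^{k+1}$ with $s\ge2$. The two cases of Proposition \ref{prop:det_G_recurrence} correspond to the second binary digit of $n$.

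In case (b), $(n)_2$ begins with $11$. We pass to $n-2^k$, which deletes one leading $1$ from the top run $1^{l_s}$, and to the weights $\mathbf{w}'$, in which $w_k$ is replaced by $\tfrac12 w_{k+1}$. The new top index is $k=k_s-1$, so the new run has length $l_s-1$. Substituting $w'_k=\frac12 w_{k+1}$ into the inductive formula gives the following.
\begin{itemize}
\item Each factor $(w'_{k}-2^j w'_{k-j})$ becomes $\tfrac12(w_{k+1}-2^{j+1}w_{k+1-j})$. This shifts $j\mapsto j+1$ in the top run and creates the missing $j=1$ factor $(w_{k+1}-2w_k)^{2^k}$, supplied by the proposition.
\item The factors in lower runs have top index $k_i<k$ and are unaffected.
\end{itemize}
When $l_s=1$, the top run disappears and $w_k$ lies at the top of the $0$-run below. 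In that case we must check that no factor involves $w_k$ except as intended. This holds because $k_{s-1}=k-1$ is the top index there, and the new factor $(w_{k+1}-2w_k)$ starts a fresh run of length $1$. The powers of $\tfrac12$ must cancel against $2^{2n-3\cdot2^k}$; this is the same computation as the $m=0$ identity in Theorem \ref{thm:det_G_2^k}.

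In case (a), $(n)_2$ begins with $10$. We pass to $2^{k+1}-n$ and, for $\widetilde{\mathcal{G}}$, to $2^{k+1}-n+1$. Since $2^{k+1}-n=\overline{(n-1)}$ on $k+1$ bits, complementation swaps the runs of $0$s and $1$s below the leading $1$. The low part $10^{l_1}$ is preserved because $2^{k+1}-n$ has the same $2$-adic valuation as $n$. The run structure of $2^{k+1}-n$ is therefore $1^{l_{s-2}}\cdots$ up to the complemented middle runs, with $l_{s-1}$ reduced. Only $w_0,\dots,w_{k+1}$ appear in the matrices, and the weights are unchanged in this case. The prefactor $(w_{k+1}-2w_k)^{2n-2^{k+1}}$ must therefore be absorbed as the factor $j=1$ of a top run. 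Matching the remaining factors then forces the recursive rule $\alpha_{i,l_i}=2^{k_{i-2}+1}-\alpha_{i-1,l_{i-1}}$, which is where this rule comes from.

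The main obstacle is this bookkeeping in case (a). The complement map reshuffles which runs are labelled $i$, so one has to verify carefully that the exponent of each factor $(w_{k_i}-2^{l_i}w_{k_i-l_i})$, at the boundary between runs, transforms according to the two-step recursion for $\alpha_{i,l_i}$, including the special values $2^{l_1+1}$ for $i=2,3$. I would first handle $l_{s-1}=1$ versus $l_{s-1}\ge2$ separately, and combine (a) followed by (b) into a single move that removes exactly one full run. The sign $(-1)^{n(n-1)/2}$ is tracked separately: $(-1)^n$ in (a) combines with the inductive sign, and $\varepsilon(n)$ in (b) with its own, via $n\mapsto 2^{k+1}-n$ and $n\mapsto n-2^k$. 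This is a routine parity check modulo $4$.

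Positivity and the strict decrease of the $\alpha_{i,j}$ in $j$ then follow by induction from the recursion, using $\alpha_{i-1,l_{i-1}}\le 2^{k_{i-1}-l_{i-1}+1}\le 2^{k_{i-2}+1}$.
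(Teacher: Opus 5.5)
\textbf{Verdict: there is a genuine gap.} Your overall plan is the paper's: strong induction on $n$ through the two branches of Proposition~\ref{prop:det_G_recurrence}. Several parts are fine: branch (b) at the level of factors (the shift $j\mapsto j+1$ in the top run, with $(w_{k+1}-2w_k)^{2^k}$ supplying $j=1$), the sign check, and the homogeneity argument for $1+\sum\alpha_{i,j}=n$.

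\textbf{Branch (a) is never carried out, and your description of it is wrong.} Take $(n)_2=1\,0^{l_{s-1}}1^{l_{s-2}}\cdots a^{l_2}1\,0^{l_1}$, so $s\ge 3$, $l_s=1$ and $k+1=k_s$. Then $(2^{k_s}-n)_2=1^{l_{s-1}}0^{l_{s-2}}\cdots\overline{a}^{l_2}1\,0^{l_1}$. Its run lengths are exactly $l_1,\dots,l_{s-1}$: nothing is ``reduced'' and no labels are reshuffled, because runs are indexed from the bottom and $k_i,\alpha_{i,j}$ depend only on $l_1,\dots,l_i$. So the inductive hypothesis supplies every factor with $i\le s-1$ verbatim, and no combination of (a) with (b) is needed. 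The entire content of the step is the identity $2n-2^{k_s}=\alpha_{s,l_s}$. Saying that matching ``forces'' the recursion for $\alpha_{i,l_i}$ reverses the logic: the recursion is part of the statement, and you must prove that it yields $2n-2^{k_s}$.

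\textbf{The same missing ingredient breaks your power-of-$2$ bookkeeping in branch (b).} This is not the computation from Theorem~\ref{thm:det_G_2^k}, where the last exponent is $1$. Here you need $\sum_{j=2}^{l_s}\alpha_{s,j}=2^{k_s-1}-2^{k_s-l_s+1}+\alpha_{s,l_s}$ to equal $2n-3\cdot 2^{k_s-1}$, which again requires $\alpha_{s,l_s}$ expressed in terms of $n$.

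\textbf{The missing idea is a closed form for the recursively defined last exponents.} Put $n_1=2^{l_1}$, and for $j\ge 2$ let $n_j$ be the integer with binary expansion $1^{l_j}0^{l_{j-1}}\cdots 1\,0^{l_1}$, so that $n_s=n$.
\begin{itemize}
\item Two's complement gives $n_j=2^{k_j}-n_{j-1}$ for $j\ge 2$.
\item Induction on the definition of $\alpha$ then gives $\alpha_{i,l_i}=2n_{i-2}$ for $i\ge 3$, that is, $\tfrac12\alpha_{s,l_s}=n\bmod 2^{k_{s-1}}$.
\end{itemize}
With this in hand:
\begin{itemize}
\item In branch (a), $2n-2^{k_s}=2n_{s-2}=\alpha_{s,l_s}$.
\item In branch (b), the powers of $2$ cancel; the case $s=2$, where $n=2^{k_2}-2^{k_1}$, is checked directly.
\item Positivity and strict monotonicity of the $\alpha_{i,j}$ become immediate. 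Your bound $\alpha_{i-1,l_{i-1}}\le 2^{k_{i-2}+1}$ only yields $\alpha_{i,l_i}\ge 0$, not positivity.
\end{itemize}

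\textbf{Two smaller inaccuracies.} First, $n=3\cdot 2^{l_1}$ begins with $11$ but falls into branch (a), so it needs separate treatment; the paper makes it a base case. Second, in branch (b) one always has $l_s\ge 2$ (or $s=2$ with $l_2\ge 2$), so your worry about the top run disappearing there is vacuous.
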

\begin{proof}
We perform induction on the length $\ell(n)$ of the prefix
$$  1^{l_s} 0^{l_{s-1}} 1^{l_{s-2}} \cdots  \overline{a}^{l_3} a^{l_2} $$
of the binary expansion of $n$. 

If $\ell(n) = 0$, then $n = 2^{l_1}$ and $s=1$. The formula follows from Theorem \ref{thm:det_G_2^k} with $k = l_1$.
We also separately consider the case $\ell(n)=1$, namely $n = 3 \cdot 2^{l_1}$ and $s=2,l_2=1$. Part (a) of Proposition \ref{prop:det_G_recurrence} applied to $k = l_1+1$ yields
$$ \mathcal{G}_{\mathbf{w}}(n) = (-1)^n (w_{k_2}-2w_{k_2-1})^{2^{l_1+1}} \mathcal{G}_{\mathbf{w}}(2^{l_1}),$$
which leads to the desired formula.

Before proceeding to the induction step, observe that for $1 \leq i \leq s-2$ the numbers 
$$n_{i} := \frac{1}{2}\alpha_{i+2,l_{i+2}}$$
have binary expansion
\begin{equation*} 
    (n_i)_2 = 1^{l_i} 0^{l_{i-1}} \cdots  \overline{b}^{l_3} b^{l_2} 1 0^{l_1},
\end{equation*}  
where $b  = (i+1) \bmod{2}$. We may also define $n_{s-1}, n_s$ in the same way, where in particular $n_s = n$. Moreover, we have the inequalities $2^{k_i-1} < n_i \leq 2^{k_i}$ and recurrence relations
$$ n_i = 2^{k_i}-n_{i-1}.$$

Now, let $\ell(n) \geq 2$ (implying $s \geq 2)$ and suppose that our claim holds for all $m \in \N$ with $\ell(m) < \ell(n)$. 
We consider two cases, depending on the second most significant digit in the binary expansion of $n$. If this digit is $0$ (so $s \geq 3$ and $l_s=1$), then $2^{k_s-1}< n < 3 \cdot 2^{k_s-2}$. We apply part (a) of Proposition \ref{prop:det_G_recurrence}, where $k = k_s-1$, to obtain
\begin{equation} \label{eq:det_G_1}
   \mathcal{G}_{\mathbf{w}}(n) = (-1)^n (w_{k_s}-2w_{k_s-1})^{2n-2^{k_s}} \mathcal{G}_{\mathbf{w}}(2^{k_s}-n).
\end{equation} 
The exponent is 
$$2n-2^{k_s} = 2(2^{k_s}-n_{s-1})-2^{k_s}= 2(2^{k_{s-1}} -n_{s-1}) = 2n_{s-2} =\alpha_{s,l_s}.$$
where we used $k_s = k_{s-1}+1$.
Furthermore, $2^{k_s}-n = n_{s-1}$, and by the inductive assumption the formula for $ \mathcal{G}_{\mathbf{w}}(n_{s-1})$ is exactly as in the statement, except the upper limit in the first product is $s-1$. Inserting it into \eqref{eq:det_G_1}, we get the desired formula (up to sign). 
Since $n_{s-1} \equiv -n \pmod{4}$, the sign equals $-1$ raised to the power
$$  n + \frac{-n(-n-1)}{2} \equiv \frac{n(n-1)}{2} \pmod{2},$$
and the formula is proved.

Now, assume that the second most significant digit in the binary expansion of $n$ is $1$. 
This means that  $3 \cdot 2^{k_s-2} < n < 2^{k_s}$, after excluding the already considered case $n=3\cdot 2^{l_1}$. We apply Proposition \ref{prop:det_G_recurrence}(b) to $k = k_s-1$ and get
\begin{equation} \label{eq:det_G_2}
    \mathcal{G}_{\mathbf{w}}(n) = 
    2^{2n-3 \cdot 2^{k_s-1}}(w_{k_s}-2w_{k_{s}-1})^{2^{k_s-1}} \mathcal{G}_{\mathbf{w}'}(n-2^{k_s-1}),
\end{equation} 
where
$$\mathbf{w}' = (w_0', w_1', \ldots, w_{k_{s}-2}', w_{k_{s}-1}') = (w_0, w_1, \ldots, w_{k_{s}-2}, \frac{1}{2}w_{k_s}).$$
The binary expansion of $n'=n-2^{k_s-1}$ is obtained by removing the leading $1$ from the expansion of $n$. Hence, the values $l'_i, k'_i, \alpha'_{i,j}$, corresponding to $n'$ agree with $l_i,k_i,\alpha_{i,j}$ for all $i \leq s-1$. For $i=s$ we have $l_s'=l_s-1,k_s'=k_s-1$ and 
\begin{align*}
    \alpha'_{s,j} &= 2^{l_s-1-j} = \alpha_{s,j+1} \qquad \text{if } j<l_s-1;\\
    \alpha'_{s,l_{s}-1} &= \alpha'_{s,l'_s} = \begin{cases}
    2^{l_1+1} &\text{if } s=2,3, \\ 
    2^{k_{s-2}+1} - \alpha_{s-1,l_{s-1}} &\text{if } s \geq 4
\end{cases} = \alpha_{s,l_s}
\end{align*} 
By the inductive assumption and $n' \equiv n \pmod{4}$, we thus get
\begin{align*}    
\mathcal{G}_{\mathbf{w}'}(n') = &(-1)^{\frac {n(n-1)}{2}} w_{l_1} \prod_{i=1}^{s-1} \prod_{j=1}^{l_i}(w_{k_i}-2^jw_{k_i-j})^{\alpha_{i,j}} \\ 
&\left( \prod_{j=1}^{l_s-2}\left(\frac{1}{2}w_{k_s}-2^jw_{k_s-1-j}\right)^{\alpha_{s,j+1}}\right) \left(\frac{1}{2}w_{k_s}-2^{l_s-1}w_{k_s-l_s}\right)^{\alpha_{s,l_s}}.
\end{align*}
The first line already appears as a factor in the desired formula. Hence, we focus on the second line, which after some manipulation becomes
$$ 2^{-m}\prod_{j=2}^{l_s} \left( w_{k_s}-2^jw_{k_s-j}\right)^{\alpha_{s,j}} , $$
where
$$ m = \sum_{j=2}^{l_s} \alpha_{s,j} = \sum_{j=2}^{l_s-1} 2^{k_s-j} + \alpha_{s,l_s} =  2^{k_s-1} -2^{k_s-l_s+1} + \alpha_{s,l_s}. $$
If $s=2$, we have $n = 2^{k_2}-2^{k_1}$ and $k_2-l_2=k_1+1$, so
$$ m = 2^{k_2-1}-2^{k_1+2}+2^{k_1+1} =  2^{k_2-1}-2^{k_1+1} = 2n -3 \cdot 2^{k_2-1}.$$
If $s \geq 3$, we also get 
$$ m =  2^{k_s-1} - 2^{k_{s-1}+1} +2n_{s-2} = 2^{k_s-1} -2n_{s-1} = 2n - 3 \cdot 2^{k_s - 1}.$$
Taking all into account, after plugging the formula for $\mathcal{G}_{\mathbf{w}'}(n')$ into \eqref{eq:det_G_2} the exponents of $2$ cancel out, and we get the desired identity.

Finally, positivity and monotonicity of the exponents $\alpha_{i,j}$ with respect to $j$ follows directly from their definition. The fact that they sum up to $n-1$ can be verified similarly as in the calculation of $m$ above. However, a simpler approach is to observe that $\mathcal{G}_{\mathbf{w}}(n)$ is either a homogeneous degree $n$ polynomial in $w_0,w_1,\ldots$ (which would imply our claim), or identically equal to $0$. But the latter cannot occur, by the formula we have just proved.
\end{proof}

\begin{exam}
Let us compute $\cal{G}_{\bf w}(54)$ using Theorem \ref{thm:det_G}. We have $(54)_{2}=110110$, i.e., $s=4$ and
$$
(l_{1},k_{1})=(1,1),\quad (l_{2},k_{2})=(1,3),\quad (l_{3},k_{3})=(1,4),\quad (l_{4},k_{4})=(2,6).
$$
The corresponding values of $\alpha_{i,j}$ are the following
$$
\alpha_{1,1}=1,\quad \alpha_{2,1}=4,\quad \alpha_{3,1}=4,\quad \alpha_{4,1}=32,\quad \alpha_{4,2}=12.
$$
Consequently we get
$$
\cal{G}_{\bf w}(54)=w_{1}(w_{1}-2w_{0})(w_{3}-2w_{2})^{4}(w_{4}-2w_{3})^4(w_{6}-2w_{5})^{32}(w_{6}-4w_{4})^{12}.
$$
\end{exam}
\begin{rem}
{\rm Theorem \ref{thm:det_G} shows that the structure of $\mathcal{G}_{\mathbf{w}}(n)$ can be seen from the pattern of consecutive blocks in the binary expansion of $n$. More precisely, the relevant indices $k_i$ and exponents $\alpha_{i,j}$ are determined by the lengths of these blocks, so the factorization reflects the 2-adic structure of $n$ rather than its size alone.}
\end{rem}

It should be possible to obtain a similar prodduct formula for the determinants $\widetilde{\mathcal{G}}_{\mathbf{w}}(n)$. However, this seems more tricky, due to the additional ``$+1$'' in the index in Proposition \ref{prop:det_G_recurrence}(a), which makes the indices harder to control when applying the proposition repeatedly. Instead, we settle for the following qualitative description, which will be sufficient for our purposes.

\begin{prop} \label{prop:det_Gt}
    Let $n \in \N_{\geq 2}$ and let $k \in \N$ be such that $2^k < n \leq 2^{k+1}$. Then we have the following:
    \begin{enumerate}[label=(\alph*)]
        \item $\widetilde{\mathcal{G}}_{\mathbf{w}}(n)$ is, up to sign, a product of $n-1$ factors of the form $(w_l - 2^{l-j} w_j)$, where $0 \leq j < l \leq k+1$;
        \item the total multiplicity of factors with $l=k+1$ is equal to $2n-2^{k+1}-1$.
    \end{enumerate}
\end{prop}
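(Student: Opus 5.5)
We argue by strong induction on $n$, applying Proposition~\ref{prop:det_G_recurrence} at each step. The claims to carry through the induction are that $\widetilde{\mathcal{G}}_{\mathbf{w}}(n)$ is, up to sign, a product of exactly $n-1$ factors $(w_l-2^{l-j}w_j)$ with $0\le j<l\le k+1$, and that exactly $2n-2^{k+1}-1$ of them have $l=k+1$.

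For the base case $n=2$ (so $k=0$), we compute directly. From $d_{\mathbf{w}}(0)=w_0$ and $d_{\mathbf{w}}(1)=w_1-w_0$ we get
$$\widetilde{\mathcal{G}}_{\mathbf{w}}(2)=\det\begin{bmatrix} w_0 & 1\\ w_1-w_0 & 1\end{bmatrix}=-(w_1-2w_0).$$
This is one factor with $l=1=k+1$, and $2n-2^{k+1}-1=1$, as required. The case $n=4$, where $\varepsilon(n)=-1$, is covered by the same bookkeeping as case (b) below, since the sign is irrelevant.

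\emph{Case (a): $2^k<n\le 3\cdot 2^{k-1}$.} Proposition~\ref{prop:det_G_recurrence}(a) gives
$$\widetilde{\mathcal{G}}_{\mathbf{w}}(n)=\pm(w_{k+1}-2w_k)^{2n-2^{k+1}-1}\,\widetilde{\mathcal{G}}_{\mathbf{w}}(m),\qquad m=2^{k+1}-n+1.$$
Here $2^{k-1}+1\le m\le 2^k$, so $m<n$ and $m$ lies in the range $2^{k-1}<m\le 2^k$. If $m\ge 2$, the induction hypothesis says that $\widetilde{\mathcal{G}}_{\mathbf{w}}(m)$ is a product of $m-1$ admissible factors, all with $l\le k$. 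None of them therefore has $l=k+1$. The total count is $(2n-2^{k+1}-1)+(2^{k+1}-n)=n-1$, and the multiplicity of factors with $l=k+1$ is exactly $2n-2^{k+1}-1$. If $m=1$, i.e.\ $k=0$ and $n=2$, we are in the base case already handled. Note also that $\widetilde{\mathcal{G}}_{\mathbf{w}}(1)=\det[1]=1$, which is consistent with zero factors if one prefers to allow $m=1$.

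\emph{Case (b): $3\cdot 2^{k-1}<n\le 2^{k+1}$.} Proposition~\ref{prop:det_G_recurrence}(b) gives
$$\widetilde{\mathcal{G}}_{\mathbf{w}}(n)=\pm 2^{2n-3\cdot2^k-1}(w_{k+1}-2w_k)^{2^k}\,\widetilde{\mathcal{G}}_{\mathbf{w}'}(n-2^k),\qquad \mathbf{w}'=(w_0,\dots,w_{k-1},\tfrac12 w_{k+1}).$$
Set $n'=n-2^k$, so $2^{k-1}<n'\le 2^k$. By induction, $\widetilde{\mathcal{G}}_{\mathbf{w}'}(n')$ is a product of $n'-1$ factors $(w'_l-2^{l-j}w'_j)$ with $0\le j<l\le k$. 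Since $\mathbf{w}'$ agrees with $\mathbf{w}$ below index $k$, a factor with $l<k$ is unchanged. Because $j<l$, the index $j$ never equals $k$, so $w'_k$ appears only when $l=k$. Exactly $2n'-2^k-1$ factors have $l=k$, and each becomes
$$\tfrac12 w_{k+1}-2^{k-j}w_j=\tfrac12\bigl(w_{k+1}-2^{k+1-j}w_j\bigr),$$
which is an admissible factor with $l=k+1$.

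Three checks finish case (b).
\begin{itemize}
\item Powers of $2$: the halves contribute $2^{-(2n'-2^k-1)}=2^{-(2n-3\cdot 2^k-1)}$, which exactly cancels the prefactor $2^{2n-3\cdot2^k-1}$.
\item Total count: $2^k+(n'-1)=n-1$.
\item Factors with $l=k+1$: $2^k+(2n'-2^k-1)=2n-2^{k+1}-1$.
\end{itemize}
The substitution can turn no other factor into one with $l=k+1$, so this count is exact.

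The main obstacle is the base and edge cases, in particular ensuring that $\widetilde{\mathcal{G}}$ at index $1$ (or index $2$ when $k$ is small) fits the induction, together with the sign $\varepsilon$ for $n=2,4$. We handle these by direct computation as above. Everything else is exponent bookkeeping.
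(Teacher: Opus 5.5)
Your proof is correct and follows the same route as the paper: strong induction on $n$ via Proposition~\ref{prop:det_G_recurrence}. In case (b), the $2n'-2^k-1$ factors with $l=k$ become $\frac12(w_{k+1}-2^{k+1-j}w_j)$ under $w_k\mapsto\frac12 w_{k+1}$, and their halves absorb the prefactor $2^{2n-3\cdot 2^k-1}$. The only differences are that you start at $n=2$ instead of the paper's formal base case $\widetilde{\mathcal{G}}_{\mathbf{w}}(1)=1$, and that your remark about $m=1$ in case (a) is vacuous, since $n=2$ falls under case (b) and $m=2^{k+1}-n+1\geq 2$ throughout case (a).
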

\begin{proof}
We prove (a) and (b) simultaneously by induction on $n$. We have $\widetilde{\mathcal{G}}_{\mathbf{w}}(1)=1$ so both statements hold for $n = 1$. Now, let $n \geq 2$ and consider two cases. If $2^k < n \leq 3 \cdot 2^{k-1}$, then our claim follows directly from Proposition \ref{prop:det_G_recurrence}(a).
If $ 3 \cdot 2^{k-1} <n \leq 2^{k+1} $, then by Proposition \ref{prop:det_G_recurrence}(b) we have
$$ \widetilde{\mathcal{G}}_{\mathbf{w}}(n)  = 2^{2n-3\cdot 2^k-1} (w_{k+1}-2w_k)^{2^k} \widetilde{\mathcal{G}}_{\mathbf{w'}}(n-2^k),$$
where $\mathbf{w}' = (w_0, w_1, \ldots, w_{k-1}, \frac{1}{2} w_{k+1})$. The inductive assumption says that the total multiplicity of factors of the form $(\frac{1}{2}w_{k+1}-2^{k-j}w_j)$ in $\widetilde{\mathcal{G}}_{\mathbf{w'}}(n-2^k)$ is 
$$ 2(n-2^k) - 2^k -1 = 2n - 3 \cdot 2^k-1. $$
Distributing the power $2^{2n-3\cdot 2^k-1}$ over these factors and taking $(w_{k+1}-2w_k)^{2^k}$ into account, we again obtain our claim.
\end{proof}

\subsection{Special cases}

We now give some applications of Theorem \ref{thm:det_G}. The first one relies on a simple but somewhat unexpected connection, which we state as a proposition. More precisely, the period-doubling sequence $\mathbf{d}=(d_n)_{n \geq 0}$ and its bit-wise negation  $\bar{\mathbf{d}}=(\bar{d}_n)_{n \geq 0}$ turn out to be instances of $\mathbf{d}_\mathbf{w}$ where the weights are (shifted) Jacobsthal numbers. We follow the definition from \cite{FKS}, where 
$$d_n := (\nu_2(n+1)+1) \bmod{2}.$$ 
We note that the variant with $0$s and $1$s swapped ($\mathbf{d}$ becomes $\bar{\mathbf{d}}$ and vice versa) seems to be more common. Also, recall that Jacobsthal numbers are defined by $J(0)=0,J(1)=1$ and $J(n+2)=J(n+1)+2J(n)$ for $n \in \N$.

\begin{prop}
We have the following:
\begin{enumerate}[label=(\alph*)]
    \item If $\mathbf{w} = (J(j+1))_{j \geq 0}$, then $\mathbf{d}_{\mathbf{w}}$ is the period-doubling sequence $\mathbf{d}$;
    \item If $\mathbf{w} = (J(j))_{j \geq 0}$, then $\mathbf{d}_{\mathbf{w}}$ is the bit-wise negation $\bar{\mathbf{d}}$ of the period-doubling sequence.
\end{enumerate}
\end{prop}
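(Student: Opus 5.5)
The plan is to work entirely through formula \eqref{eq:d_w}, which writes $d_{\mathbf{w}}(m)=\sum_{i=0}^{\nu_2(m+1)} u_i$, where $u_0=w_0$ and $u_i=w_i-2w_{i-1}$ for $i\ge 1$. So it suffices to compute the sequence $\mathbf{u}$ for each of the two Jacobsthal weight sequences and then evaluate the partial sums $U_k:=\sum_{i=0}^{k}u_i$. The claims (a) and (b) then amount to showing that $U_k$ equals $(k+1)\bmod 2$, respectively $k \bmod 2$, with $k=\nu_2(m+1)$.

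The key input is the closed form $J(n)=\frac{2^n-(-1)^n}{3}$. Equivalently, one can use the recurrence directly in the rearranged form $J(n+1)-2J(n)=-(J(n)-2J(n-1))$, which shows that $J(n+1)-2J(n)=(-1)^n$.

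For (a), take $w_j=J(j+1)$. Then $u_0=J(1)=1$, and for $j\ge1$ we get $u_j=J(j+1)-2J(j)=(-1)^j$. Hence $U_k=1-1+1-\cdots$ with $k+1$ terms, so $U_k=1$ for even $k$ and $U_k=0$ for odd $k$. This is exactly $(k+1)\bmod 2$, and with $k=\nu_2(m+1)$ it matches $d_m$ as defined above.

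For (b), take $w_j=J(j)$. Then $u_0=J(0)=0$, and for $j\ge1$ we get $u_j=J(j)-2J(j-1)=(-1)^{j-1}$. Hence $U_k=0+1-1+\cdots$, which equals $1$ for odd $k$ and $0$ for even $k$, that is, $k\bmod 2=1-d_m=\bar d_m$.

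There is no real obstacle here; the only care needed is with the index offsets in the Jacobsthal identity and the special treatment of $u_0$. I would double-check both by computing the first few terms, for instance $d_{\mathbf{w}}(0)$ and $d_{\mathbf{w}}(1)$, directly from the definition $d_{\mathbf{w}}(m)=s_{\mathbf{w}}(m+1)-s_{\mathbf{w}}(m)$.
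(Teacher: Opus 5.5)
Your proposal is correct and follows the paper's own argument: compute $u_0$ and $u_j=J(j+1)-2J(j)=(-1)^j$ (respectively $u_j=J(j)-2J(j-1)=(-1)^{j-1}$), then evaluate the alternating partial sums in \eqref{eq:d_w} at $k=\nu_2(m+1)$. The only addition is that you derive the identity $J(n+1)-2J(n)=(-1)^n$ from the recurrence, where the paper simply calls it well known.
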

\begin{proof}
    In the case (a) we have $u_0=J_1=1$ and $u_j=J(j+1)-2J(j) = (-1)^{j}$ for $j \geq 1$, which is a well-known identity. It follows that
    $$d_{\mathbf{w}}(n) = \sum_{j=0}^{\nu_2(n+1)} (-1)^{j} = (\nu_2(n+1)+1) \bmod{2} = d_n.$$
    In the case (b), we have $u_0 = J_0 = 0$ and $u_j = J(j) - 2J(j-1) = (-1)^{j-1}$ for $j \geq 1$. A similar computation leads to $d_{\mathbf{w}}(n) = \bar{d}_n$.
\end{proof}

As a corollary, we obtain a slight generalization of the result by Fokkink, Kraaikamp and Shallit \cite[Theorem 2]{FKS} (and more general Theorem 10 from the same paper) concerning Hankel determinants for the period-doubling sequence.
 There appears to be a small mistake in the original statement of the latter theorem, namely the conditions modulo $4$ governing the sign of the determinant are shifted by $1$.

\begin{prop} \label{prop:G_period_doubling}
Let $n \in \N_+$ and, with the notation of Theorem \ref{thm:det_G}, put
$$ l = \max_{1 \leq i \leq s} l_i, \qquad A_j = \sum_{i:\, l_i \geq j}^{s} \alpha_{i,j}. $$
Then we have the following:
\begin{enumerate}[label=(\alph*)]
    \item The $n$th Hankel determinant for the period-doubling sequence $\mathbf{d}$ is 
        $$  (-1)^{\frac{(n-1)(n-2)}{2}} J(l_1+1) \prod_{j=1}^{l} J(j)^{A_j}.   $$
    \item The $n$th Hankel determinant for the bit-wise negation $\bar{\mathbf{d}}$ of the period-doubling sequence is 
    $$ (-1)^{\frac{n(n-1)}{2}} J(l_1) \prod_{j=1}^{l} J(j)^{A_j}.   $$
    In particular, the determinant is nonzero if and only if $n$ is even.
\end{enumerate}
\end{prop}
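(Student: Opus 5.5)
The plan is to specialize Theorem \ref{thm:det_G} to the two Jacobsthal weight sequences given by the preceding proposition. The whole proof then reduces to evaluating the factors $w_{k}-2^{j}w_{k-j}$ and tracking signs. I will use the Binet form $J(m)=\frac{2^m-(-1)^m}{3}$.

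\textbf{Evaluating the factors.} In case (a), $w_m=J(m+1)$, and a direct computation gives
$$ J(k+1)-2^jJ(k+1-j)=\frac{-(-1)^{k+1}+(-1)^{k+1}(-2)^j}{3}=(-1)^{k}\,\frac{1-(-2)^j}{3}=(-1)^{k+j+1}J(j). $$
In case (b), $w_m=J(m)$, and the same computation with $k$ shifted by one gives $J(k)-2^jJ(k-j)=(-1)^{k+j}J(j)$. The leading factor $w_{l_1}$ equals $J(l_1+1)$ in case (a) and $J(l_1)$ in case (b). Substituting into Theorem \ref{thm:det_G} and grouping the factors with the same $j$ produces $\prod_{j=1}^{l}J(j)^{A_j}$. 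So the claimed formulas hold up to sign, and the remaining task is to determine the sign
$$ \sigma=\prod_{i,j}\bigl(\pm(-1)^{k_i+j}\bigr)^{\alpha_{i,j}}. $$

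\textbf{Sign bookkeeping (main obstacle).} Only the exponents $\alpha_{i,j}$ that are odd affect $\sigma$, so I will determine their parity from the definitions in Theorem \ref{thm:det_G}.
\begin{itemize}
\item For $j<l_i$ we have $\alpha_{i,j}=2^{k_i-j}$, and $k_i-j\ge 1$ in all cases (for $i=1$ because $j<l_1=k_1$; for $i\ge2$ because $k_i\ge l_i+1$). These exponents are even.
\item For $i=2,3$ we have $\alpha_{i,l_i}=2^{l_1+1}$, which is even.
\item For $i\ge4$, $\alpha_{i,l_i}=2^{k_{i-2}+1}-\alpha_{i-1,l_{i-1}}$, which is even by induction on $i$.
\item The only odd exponent is $\alpha_{1,l_1}=1$, and it is present only when $l_1\ge1$, i.e.\ when $n$ is even.
\end{itemize}
This factor has $k_1=j=l_1$. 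In case (a) it contributes $(-1)^{2l_1+1}=-1$, and in case (b) it contributes $(-1)^{2l_1}=+1$.

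\textbf{Concluding.} In case (b) the sign is therefore $(-1)^{n(n-1)/2}$, exactly as stated. In case (a) the sign is $(-1)^{n(n-1)/2}$ when $n$ is odd and $-(-1)^{n(n-1)/2}$ when $n$ is even. Since
$$ \frac{n(n-1)}{2}-\frac{(n-1)(n-2)}{2}=n-1, $$
both cases agree with $(-1)^{(n-1)(n-2)/2}$.

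For the nonvanishing claim in (b): $J(j)\neq0$ for $j\ge1$, so the determinant vanishes exactly when $J(l_1)=0$. This happens exactly when $l_1=0$, i.e.\ when $n$ is odd.

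I expect the delicate points to be the parity induction for $\alpha_{i,l_i}$ and the degenerate case $l_1=0$, where the $i=1$ product is empty.
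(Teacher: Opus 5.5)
Your proposal is correct and follows the paper's proof. Like the paper, you specialize Theorem \ref{thm:det_G} to the Jacobsthal weights, evaluate each factor as $\pm J(j)$, and note that the only odd exponent is $\alpha_{1,l_1}=1$, present exactly when $n$ is even, which fixes the sign. Your explicit parity check of the $\alpha_{i,j}$ and your condition $l_1\ge 1$ (the written proof says $l_1\ge 2$, apparently a slip) are, if anything, more careful than the paper.
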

\begin{proof}
 In case (a), put $\mathbf{w} = (J(j+1))_{j \geq 0}$. The product over $i,j$ in Theorem \ref{thm:det_G} consists of factors of the form
    $$ w_{k_i} - 2^j w_{k_i-j} = J(k_i+1) - 2^j J(k_i+1-j) = (-1)^{k_i+1-j} J(j). $$
   After rearranging, we get the product as in the statement (up to sign).
  The sign equals $-1$ raised to the power
    $$
        \frac{n(n-1)}{2} + \sum_{i=1}^s \sum_{j=1}^{l_i} (k_i+1 - j) \alpha_{i,j}.$$ 
   The only odd $\alpha_{i,j}$ is $\alpha_{1,l_1}=1$, which only occurs when $l_1 \geq 2$, i.e., $n$ is even. The corresponding summand equals $1$, and we obtain the sign as in the statement.
   
    The proof of (b) is similar. This time, the factors are
$$ w_{k_i} - 2^j w_{k_i-j} = J(k_i) - 2^j J(k_i-j) = (-1)^{k_i-j} J(j), $$
and the sign equals $-1$ raised to the power
    $$
        \frac{n(n-1)}{2} + \sum_{i=1}^s \sum_{j=1}^{l_i} (k_i - j) \alpha_{i,j}.$$
  All summands are even, and we again obtain the desired formula.
\end{proof}

We now apply Theorem \ref{thm:det_G} to the weights $w_j = t^j$, where
we adopt the notation
$$ \mathcal{G}(n,t) := \mathcal{G}_{\mathbf{w}}(n) \qquad \text{for } \mathbf{w}=(t^j)_{j \geq 0}. $$
The resulting characterization of the roots $\mathcal{G}(n,t)$  will play a  role  in Section \ref{sec:vanishing}. 

\begin{cor} \label{cor:det_G(n,t)}
Let $n \in \N_+$ and, with the notation of Theorem \ref{thm:det_G}, put
$$ A=l_1 + \sum_{i=1}^s \sum_{j=1}^{l_1} (k_i-j) \alpha_{i,j}, \qquad l = \max_{1 \leq i \leq s} l_i, \qquad A_j = \sum_{i:\, l_i \geq j}^{s} \alpha_{i,j}. $$
Then 
$$ \mathcal{G}(n,t) = (-1)^{\frac{n(n-1)}{2}} t^A \prod_{j=1}^{l} (t^j - 2^j)^{A_j},$$
 In particular, the roots of $\mathcal{G}(n,t)$ are precisely $t=0$ and $t=2\zeta$, where $\zeta$ is any root of unity of order $\leq l$.
\end{cor}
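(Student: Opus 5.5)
The plan is to substitute $w_j=t^j$ directly into the product formula of Theorem~\ref{thm:det_G} and regroup the factors. No new matrix manipulation is needed.

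\textbf{Step 1: factor each term.} With $w_j=t^j$ the leading factor is $w_{l_1}=t^{l_1}$. Every factor of the double product becomes
$$ w_{k_i}-2^jw_{k_i-j} = t^{k_i}-2^jt^{k_i-j} = t^{k_i-j}\,(t^j-2^j), $$
which is legitimate because $j\le l_i\le k_i$ for all indices appearing. For $i=1$ we have $k_1=l_1$. For $i\ge 2$ we have $k_i\ge l_i+1$, so $k_i-j\ge 0$ in every case.

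\textbf{Step 2: collect powers.} Raising each factor to $\alpha_{i,j}$ and multiplying, the total power of $t$ is
$$ l_1+\sum_{i=1}^s\sum_{j=1}^{l_i}(k_i-j)\alpha_{i,j}. $$
This is the exponent $A$ of the statement; I read the inner upper limit written there as $l_i$, matching Theorem~\ref{thm:det_G}. The remaining factors $(t^j-2^j)^{\alpha_{i,j}}$ are grouped by $j$. For a fixed $j\le l$ they occur exactly for those $i$ with $l_i\ge j$, so their total exponent is $A_j$. The sign $(-1)^{n(n-1)/2}$ is carried over unchanged from Theorem~\ref{thm:det_G}. This gives the displayed formula.

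\textbf{Step 3: roots.} A nonzero $t$ is a root iff $t^j=2^j$ for some $j\le l$ with $A_j>0$, i.e.\ $t=2\zeta$ with $\zeta^j=1$.
\begin{itemize}
\item Each $A_j$ with $1\le j\le l$ is positive. Some $l_i\ge j$ exists because $l=\max_i l_i$, and all $\alpha_{i,j}$ are positive by Theorem~\ref{thm:det_G}.
\item Hence the nonzero roots are exactly $2\zeta$ with $\zeta$ a root of unity whose order divides some $j\le l$, equivalently has order $\le l$.
\end{itemize}
For $t=0$ to be a root we need $A>0$.
\begin{itemize}
\item If $s\ge 2$, any factor with $i\ge 2$ has $k_i-j\ge k_i-l_i\ge 1$, so $A>0$.
\item If $s=1$, then $n=2^{l_1}$ and $A\ge l_1>0$ as soon as $n\ge 2$.
\item The degenerate case $n=1$ gives $\mathcal{G}(1,t)=w_0=1$, which has no roots. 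It has to be excluded, or read as the vacuous case $l=0$ together with $A=0$.
\end{itemize}

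\textbf{Expected obstacle.} The main difficulty is bookkeeping rather than substance: ensuring the index ranges ($l_i$ versus $l_1$ in $A$) and the positivity of exponents are correctly transcribed, and treating $n=1$ separately.
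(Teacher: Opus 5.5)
Your proposal is correct and is essentially the paper's argument: the paper states this as a direct corollary of Theorem~\ref{thm:det_G} without a separate proof, and the intended reasoning is your substitution $w_j=t^j$, factoring $w_{k_i}-2^jw_{k_i-j}=t^{k_i-j}(t^j-2^j)$ and regrouping by $j$. Both of your caveats are right. The inner summation limit in $A$ must be $l_i$ rather than $l_1$. Also, since $\mathcal{G}(1,t)=w_0=1$, the assertion that $t=0$ is a root holds only for $n\geq 2$.
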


\section{Formulas for the determinants $\mathcal{H}_{\mathbf{w}}(n)$}\label{sec:H}

The main goal of this section is to derive formulas for the determinants $\mathcal{H}_{\mathbf{w}}(n)$. In the general case we give a set of recursive formulas, involving a sequence of determinants of certain auxiliary matrices. For special choices of weights $\mathbf{w}$ and indices $n$ we provide explicit expressions, including \eqref{eq:simple_expression} and \eqref{eq:polynomial_simple_expression}. The overall approach is similar as in the previous section, although some additional complications arise.

\subsection{Recurrence relations}
  We first give a recurrence relation satisfied by the matrices $H_{\mathbf{w}}(n)$. In its formulation, we use the notation 
$$F_{\mathbf{w}}(n) := G_{\mathbf{w}}(n) - w_0 J_n = [d_{\mathbf{w}}(i+j)-w_0]_{0 \leq i,j < n}. $$

 \begin{prop} \label{prop:H_recursion}
    For all $k,n \in \N$ we have
    $$H_{\mathbf{w}}(2^k n) = J_n \otimes H_{\mathbf{w}}(2^k) + H_{\mathbf{w}^{(k)}}(n) \otimes J_{2^k}  + F_{\mathbf{w}^{(k)}}(n) \otimes K_{2^k},$$
    where $\mathbf{w}^{(k)} = (w_k,w_{k+1},\ldots)$ denotes the $k$-fold shift of $\mathbf{w}$.
\end{prop}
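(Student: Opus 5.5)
We will compare the two sides entrywise, block by block, in the same way as in the proof of Proposition \ref{prop:G_recursion}. Write $H_{\mathbf{w}}(2^k n)$ as an $n\times n$ block matrix with blocks $H_{p+q}$ of size $2^k\times 2^k$, for $0\le p,q<n$. The entry at position $(i,j)$ of the block $H_l$, with $0\le i,j<2^k$ and $l=p+q$, is $s_{\mathbf{w}}(2^k l + i+j)$. By the Kronecker product structure, the claimed identity is equivalent to the blockwise identity
$$ s_{\mathbf{w}}(2^k l + m) = s_{\mathbf{w}}(m) + s_{\mathbf{w}^{(k)}}(l) + \bigl(d_{\mathbf{w}^{(k)}}(l) - w_k\bigr)[m \geq 2^k], \qquad m=i+j\in\{0,\ldots,2^{k+1}-2\}, $$
where $[\cdot]$ is the indicator. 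Here we use that $(K_{2^k})_{ij}=1$ exactly when $i+j\geq 2^k$, that $(J_{2^k})_{ij}=1$ always, and that the first entry of $\mathbf{w}^{(k)}$ is $w_k$, so $F_{\mathbf{w}^{(k)}}(n)$ has entries $d_{\mathbf{w}^{(k)}}(p+q)-w_k$. Also, $H_{\mathbf{w}}(2^k)$ contributes $s_{\mathbf{w}}(m)$, since $m<2^{k+1}$ lies within the range of its entries.

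We then split into two cases.

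\textbf{Case $m<2^k$.} The binary digits of $2^k l + m$ are those of $m$ in positions $0,\ldots,k-1$, followed by those of $l$ shifted up by $k$. Hence $s_{\mathbf{w}}(2^kl+m)=s_{\mathbf{w}}(m)+s_{\mathbf{w}^{(k)}}(l)$, as required.

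\textbf{Case $2^k\le m\le 2^{k+1}-2$.} Write $m=2^k+m'$ with $0\le m'<2^k$. Then $2^k l+m = 2^k(l+1)+m'$, so by the first case
$$ s_{\mathbf{w}}(2^kl+m)=s_{\mathbf{w}}(m')+s_{\mathbf{w}^{(k)}}(l+1). $$
By definition of the first difference, $s_{\mathbf{w}^{(k)}}(l+1)=s_{\mathbf{w}^{(k)}}(l)+d_{\mathbf{w}^{(k)}}(l)$. Since $m'<2^k$, we also have $s_{\mathbf{w}}(m)=w_k+s_{\mathbf{w}}(m')$. Substituting both gives
$$ s_{\mathbf{w}}(m)-w_k+s_{\mathbf{w}^{(k)}}(l)+d_{\mathbf{w}^{(k)}}(l), $$
which is exactly the right-hand side.

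There is no real obstacle; the only points needing care are bookkeeping. First, $K_{2^k}$ marks precisely the positions with $i+j\ge 2^k$ (strictly below the antidiagonal). Second, the correction term must be $d_{\mathbf{w}^{(k)}}-w_k$, which is why $F$ rather than $G$ appears. Finally, the case $k=0$ is consistent: then $K_1=0$, and the identity $H_{\mathbf{w}}(n)=J_n\otimes H_{\mathbf{w}}(1)+H_{\mathbf{w}}(n)$ holds because $H_{\mathbf{w}}(1)=[0]$.
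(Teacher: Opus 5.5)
Your proposal is correct and follows essentially the same route as the paper. Both split $H_{\mathbf{w}}(2^k n)$ into $2^k\times 2^k$ blocks, distinguish entrywise the cases $i+j<2^k$ and $i+j\ge 2^k$, and identify the correction $s_{\mathbf{w}^{(k)}}(l+1)-s_{\mathbf{w}^{(k)}}(l)-w_k$ as the entry of $F_{\mathbf{w}^{(k)}}(n)$ that multiplies $K_{2^k}$. Your threshold $2^k$ is the correct one; the paper's proof writes $2^m$ there, which is a typo.
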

\begin{proof}
The proof is similar to Proposition \ref{prop:G_recursion}.
We write the matrix $H_{\mathbf{w}}(2^k n)$ in block form
$$ H_{\mathbf{w}}(2^k n) = \begin{bmatrix}
    H_0 & H_1 &  \cdots & H_{n-1} \\
    H_1 & H_2 &  \cdots & H_{n} \\
    \vdots & \vdots & & \vdots \\
    H_{n-1} & H_{n} &\cdots & H_{2n-2} 
\end{bmatrix}, $$
where the blocks $H_l$ have size $2^k \times 2^k$. The entry at position $(i,j)$ in $H_l$ is
$$ s_{\mathbf{w}}(2^k l + i+j) = \begin{cases}
    s_{\mathbf{w}^{(k)}}(l) + s_{\mathbf{w}}(i+j) &\text{if } i+j < 2^m, \\
    s_{\mathbf{w}^{(k)}}(l+1) + s_{\mathbf{w}}(i+j)-w_k &\text{if } i+j \geq 2^m.
\end{cases}  $$
This means that
$$   H_l = H_{\mathbf{w}}(2^k) + s_{\mathbf{w}^{(k)}}(l) J_{2^k} +(s_{\mathbf{w}^{(k)}}(l+1)-s_{\mathbf{w}^{(k)}}(l)-w_k) K_{2^k}, $$
and the result follows.
\end{proof}

We now move on to prove a general recursive formula for determinants $\mathcal{H}_{\mathbf{w}}(n)$. We state it in vector form, which contains the recurrence for $\widetilde{\mathcal{G}}_{\mathbf{w}}(n)$ from Proposition \ref{prop:det_G_recurrence}. 

\begin{thm} \label{thm:det_H_recurrence}
Let  $n \in \N_{\geq 5}$ and $k \in \N$ be such that $2^k < n \leq 2^{k+1}$. 
The following recursive formulas hold.
\begin{enumerate}[label=(\alph*)]
    \item If $2^k < n \leq 3 \cdot 2^{k-1}$, then
$$\begin{bmatrix}
    \mathcal{H}_{\mathbf{w}}(n) \\
    \widetilde{\mathcal{G}}_{\mathbf{w}}(n)
\end{bmatrix} = 
A_{\mathbf{w}}(n) \begin{bmatrix}
    -1 & C_{\mathbf{w}}(n) \\ 0 & 1
\end{bmatrix}
\begin{bmatrix}
    \mathcal{H}_{\mathbf{w}}(2^{k+1}-n+1) \\
    \widetilde{\mathcal{G}}_{\mathbf{w}}(2^{k+1}-n+1)
\end{bmatrix},
$$
where 
\begin{align*}
    A_{\mathbf{w}}(n) &=  (-1)^n (w_{k+1}-2w_k)^{2n-2^{k+1}-1}, \\
    C_{\mathbf{w}}(n) & = (-1)^n \frac{w_k^2}{w_{k+1}-2w_k}.
\end{align*}
    \item If $3 \cdot 2^{k-1} < n \leq 2^{k+1}$, then
    $$\begin{bmatrix}
    \mathcal{H}_{\mathbf{w}}(n) \\
    \widetilde{\mathcal{G}}_{\mathbf{w}}(n)
\end{bmatrix} = 
B_{\mathbf{w}}(n) \begin{bmatrix}
    1 & C_{\mathbf{w}}(n) \\ 0 & 1
\end{bmatrix}
\begin{bmatrix}
    \mathcal{H}_{\mathbf{w}'}(n-2^k) \\
    \widetilde{\mathcal{G}}_{\mathbf{w}'}(n-2^k)
\end{bmatrix},
$$
where $C_{\mathbf{w}}(n)$ is as in (a) and
\begin{align*}
   B_{\mathbf{w}}(n) &= 2^{2n-3\cdot 2^k-1} (w_{k+1}-2w_k)^{2^k}, \\ 
   \mathbf{w}' &= (w_0, w_1, \ldots, w_{k-1}, \frac{1}{2} w_{k+1})
\end{align*}
 \end{enumerate}
\end{thm}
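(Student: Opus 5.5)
The second rows of both identities are exactly the $\widetilde{\mathcal{G}}_{\mathbf{w}}$ recursions of Proposition~\ref{prop:det_G_recurrence}; note that $A_{\mathbf{w}}(n)$ and $B_{\mathbf{w}}(n)$ are precisely the factors appearing there. So the content is the first row. My plan is to reduce $\mathcal{H}_{\mathbf{w}}(n)$ to a determinant built from $d_{\mathbf{w}}$, then repeat the block manipulations from the proof of Proposition~\ref{prop:det_G_recurrence}.

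First, I subtract from each row of $H_{\mathbf{w}}(n)$ the preceding row, working from the bottom up, and do the same with columns. Row $0$ is $(s_{\mathbf{w}}(j))_j$, and the resulting matrix has entries
\[
d_{\mathbf{w}}(i+j-1)-d_{\mathbf{w}}(i+j-2)
\]
in the interior. A cleaner route is to do only the row differencing. This gives a matrix whose rows $1,\dots,n-1$ are $[d_{\mathbf{w}}(i-1+j)]_j$ and whose row $0$ is $(s_{\mathbf{w}}(j))_j$. I then expand by linearity in row $0$, or equivalently bordered-determinant style.

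Rather than fight this directly, I will work with the block form from Proposition~\ref{prop:H_recursion}, taking its $n=2$ case:
\[
H_{\mathbf{w}}(2^{k+1}) = J_2\otimes H_{\mathbf{w}}(2^k) + H_{\mathbf{w}^{(k)}}(2)\otimes J_{2^k} + F_{\mathbf{w}^{(k)}}(2)\otimes K_{2^k}.
\]
As in the proof of Proposition~\ref{prop:det_G_recurrence}, I subtract the top block row from the bottom one, and then the left block column from the right one. The bottom-right region becomes a combination of $u_{k+1}L_{2^k}$-type pieces, since $K+L$ minus a shifted $K$ behaves like $L$ together with rank-one $J$ corrections involving $w_k$. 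The restriction to the leading $n\times n$ corner is realised by the same operations on $H_{\mathbf{w}}(n)$.

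The key observation is that, after these operations, the off-diagonal blocks are the antidiagonal blocks $u_{k+1}L$ plus multiples of all-ones blocks coming from $w_k J$ terms. Those all-ones perturbations are rank one, and they are what produce the $\widetilde{\mathcal{G}}$ term with coefficient $C_{\mathbf{w}}(n)=\pm w_k^2/(w_{k+1}-2w_k)$. The numerator $w_k^2$ arises as the product of the two rank-one contributions, one from the row side and one from the column side. The denominator $u_{k+1}$ arises because one power of $u_{k+1}$ is lost when a $\mathbf{1}$-column replaces an $L$-column.

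I will then do a generalized Laplace expansion along the $u_{k+1}L$ blocks exactly as in cases \eqref{eq:G_a} and \eqref{eq:G_b}. By multilinearity in the column of the reduced matrix that carries the $w_k\mathbf{1}$ perturbation, the result splits into two terms:
\begin{itemize}
\item the unperturbed term, which is $\mathcal{H}_{\mathbf{w}}(2^{k+1}-n+1)$ in case (a), or $\mathcal{H}_{\mathbf{w}'}(n-2^k)$ in case (b) after the same ``add $\tfrac12$ of the bottom block row'' trick that replaces $w_k$ by $\tfrac12 w_{k+1}$;
\item a term in which that column is replaced by $\mathbf{1}$, which is by definition a $\widetilde{\mathcal{G}}$ determinant.
\end{itemize}

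The index shift $2^{k+1}-n+1$ in case (a) (rather than $2^{k+1}-n$) should come from the first row and column of $H$, which are not differences. They survive the Laplace expansion and enlarge the remaining Hankel block by one, in the same way the extra $\mathbf{1}$-column does for $\widetilde{\mathcal{G}}$. I would handle the case $n\ge5$ generically and check the signs $(-1)^n$ against the $\det L$ factors, as was done for $\mathcal{G}$.

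The main obstacle will be the precise bookkeeping of the rank-one $J$ perturbations. Concretely, this means showing that the first reduced row and column, which are values of $s_{\mathbf{w}}$, interact with the $L$-blocks to yield exactly the entries of $H$ on the smaller index together with the single coefficient $w_k^2/u_{k+1}$. I would first verify this on small $k$ (for example $n=5,6,7,8$) symbolically, to pin down where the $w_k$ terms sit before and after expansion.
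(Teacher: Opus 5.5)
Your reduction of the second rows to Proposition~\ref{prop:det_G_recurrence} is correct. The overall frame is also the right one: the $n=2$ case of Proposition~\ref{prop:H_recursion}, the two block subtractions, a Laplace expansion, and the $\tfrac12$-trick in case (b). However, the step that actually produces the first row is missing, and the mechanism you describe would not work as stated.

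After the block subtractions, the off-diagonal blocks are $w_kJ_{2^k}+u_{k+1}K_{2^k}$ and the lower-right block is $u_{k+1}J_{2^k}-2u_{k+1}K_{2^k}$. These involve $K$, not $L$: the carry in $s_{\mathbf{w}}$ affects every position with $i+j\ge 2^k$, not just the antidiagonal. The all-ones part $\left[\begin{smallmatrix}0&w_k\\ w_k&u_{k+1}\end{smallmatrix}\right]\otimes J_{2^k}$ is spread over $2^k$ columns. So there is no single column in which multilinearity splits the determinant into two terms.

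The missing idea is to concentrate this part first. Subtract the $2^k$th row from every row below it, and the $2^k$th column from every column to its right. Since $K_{2^k}$ has zero first row and column, the $K$-parts are untouched and appear as $u_{k+1}M_{2^k-1}$ and $-2u_{k+1}M_{2^k-1}$. Every $J$-block collapses to a border row $w_k\mathbf{1}^T$, a border column $w_k\mathbf{1}$, and a corner entry $u_{k+1}$. In case (a), Laplace expansion along the two $u_{k+1}M$ blocks of size $n-2^k-1$ then leaves
\[
\det\begin{bmatrix} H_{\mathbf{w}}(m) & w_k\mathbf{1}_m\\ w_k\mathbf{1}_m^T & u_{k+1}\end{bmatrix},\qquad m=2^{k+1}-n+1.
\]
This is the real source of the ``$+1$''. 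The border uses up one row and one column of the lower block row, so only $n-2^k-1$ rows and columns are eliminated, not $n-2^k$. It is not due to the first row and column of $H_{\mathbf{w}}(n)$ failing to be differences; in the block route you chose, no such differencing takes place.

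Second, the $\widetilde{\mathcal{G}}$ term is not ``by definition'' a determinant with one column replaced by $\mathbf{1}$. The matrix $\widetilde{G}_{\mathbf{w}}(m)$ has entries $d_{\mathbf{w}}(i+j)$, whereas every matrix in your reduction has entries $s_{\mathbf{w}}(i+j)$. Expanding the bordered determinant directly gives $u_{k+1}\mathcal{H}_{\mathbf{w}}(m)-w_k^2\,\mathbf{1}^T\operatorname{adj}(H_{\mathbf{w}}(m))\mathbf{1}$, which is a sum over all columns.

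What turns this into $\widetilde{\mathcal{G}}_{\mathbf{w}}(m)$ is a further column differencing of the bordered matrix. Subtract column $i$ from column $i+1$ for $i=m-2,\dots,0$. The border row becomes $(w_k,0,\dots,0,u_{k+1})$, and columns $1,\dots,m-1$ become $[d_{\mathbf{w}}(i+j)]$. Expanding along the last row gives $u_{k+1}\mathcal{H}_{\mathbf{w}}(m)$ plus $\pm w_k\det[\,d_{\mathbf{w}}(i+j)\mid w_k\mathbf{1}\,]=\pm w_k^2\widetilde{\mathcal{G}}_{\mathbf{w}}(m)$.

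Case (b) goes the same way after expanding along the $u_{k+1}M$ blocks of size $2^{k+1}-n$. The $\tfrac12$-row operation kills the remaining upper-right $u_{k+1}M$ block and changes $w_k$ to $\tfrac12 w_{k+1}$ only inside $H_{\mathbf{w}}(n-2^k)$. The border keeps the original $w_k$, which is why $C_{\mathbf{w}}(n)$ involves $w_k^2$. As written, your proposal postpones exactly this bookkeeping to symbolic experiments, so the core of the argument is absent.
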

\begin{proof}
We only need to prove the formula for $\mathcal{H}_{\mathbf{w}}(n)$. First, we apply Proposition \ref{prop:H_recursion} to $n=2$, obtaining
$$  H_{\mathbf{w}}(2^{k+1}) = 
\begin{bmatrix}
    1&1\\1&1
\end{bmatrix} \otimes H_{\mathbf{w}}(2^{k}) + 
\begin{bmatrix}
    0&w_k\\w_k&w_{k+1}
\end{bmatrix} \otimes J_{2^{k}}+ 
u_{k+1}\begin{bmatrix}
    0&1\\1&0
\end{bmatrix} \otimes K_{2^k},$$
where $u_{k+1} = w_{k+1}-2w_k$. We now perform elementary row and column operations common for both cases (a) and (b). 
First, we subtract the top row of blocks from the bottom one, and then subtract the left column from the right, obtaining 
\begin{equation} \label{eq:H_after_reduction}
\begin{bmatrix}
    1&0\\0&0
\end{bmatrix} \otimes H_{\mathbf{w}}(2^{k}) + 
\begin{bmatrix}
    0&w_k\\w_k&u_{k+1}
\end{bmatrix} \otimes J_{2^k} + 
u_{k+1}\begin{bmatrix}
    0&1\\1&-2
\end{bmatrix} \otimes K_{2^k}.    
\end{equation}
Next, we subtract the $2^k$th (non-block) row from all rows below it, and the $2^k$th column from all columns to the right. This only affects the part involving $J_{2^k}$, and the whole matrix becomes
\[\begin{NiceArray}{ccccl}[right-margin=.4em]
\Block[borders={bottom,right,tikz=dashed}]{2-2}<\Large>{H_{\mathbf{w}}(2^k)} &    & w_k & \mathbf{0} &  1 \\
 &   & w_k \mathbf{1} & u_{k+1} M & 2^k-1 \\[-1em]  \\ %tiny extra row to avoid overlap
w_k  & w_k \mathbf{1}^T  & u_{k+1} & \mathbf{0} &   1 \\
\mathbf{0} & u_{k+1} M &  \mathbf{0}  & -2u_{k+1}M &  2^k-1 
\CodeAfter
\SubMatrix[{1-1}{5-4}]
\end{NiceArray}.\]

From now on, we restrict our attention to the $n\times n$ submatrix lying in the upper left corner. In case (a) this submatrix is
\[\begin{NiceArray}{ccccl}[right-margin=.4em]
\Block[borders={bottom,right,tikz=dashed}]{2-2}<\Large>{H_{\mathbf{w}}(2^k)} &    & w_k  \mathbf{1} & \mathbf{0} &  2^{k+1}-n+1 \\
 &   & w_k \mathbf{1} & u_{k+1} M & n-2^k-1 \\[-1em]  \\ %tiny extra row to avoid overlap
w_k \mathbf{1}^T  & w_k \mathbf{1}^T  & u_{k+1} & \mathbf{0} &   1 \\
\mathbf{0} & u_{k+1} M &  \mathbf{0}  & \mathbf{0} &  n-2^k-1 
\CodeAfter
\SubMatrix[{1-1}{5-4}]
\end{NiceArray}.\]
By expanding the determinant of this matrix with respect to the last column of blocks and then the last row of blocks, we get
$$ \mathcal{H}_{\mathbf{w}}(n) = (-1)^{n-2^k-1} u_{k+1}^{2n-2^{k+1}-2}\det \begin{bmatrix}
     H_{\mathbf{w}}(2^{k+1}-n+1) & w_k \mathbf{1} \\
     w_k \mathbf{1}^T & u_{k+1} 
\end{bmatrix}.  $$
Now, for each $i=2^{k+1}-n-1, 2^{k+1}-n-2, \ldots, 1,0$, in this order, we subtract the $i$th column from the $(i+1)$st one. We get a matrix of the form
$$ \begin{bmatrix}
     V & W & w_k \mathbf{1} \\
     w_k & \mathbf{0} & u_{k+1} 
\end{bmatrix}, $$
where $V$ is a column vector of length $2^{k+1}-n+1$ and $W$ is a  matrix of size $(2^{k+1}-n+1)\times(2^{k+1}-n)$. Observe that $\begin{bmatrix} V & W
\end{bmatrix}$ is column-equivalent to $H_{\mathbf{w}}(2^{k+1}-n+1)$, while $\begin{bmatrix}
    W & \mathbf{1}
\end{bmatrix}=\widetilde{G}_{\mathbf{w}}(2^{k+1}-n+1)$. 
After expanding with respect to the last row, the remaining determinant becomes
$$ u_{k+1} \mathcal{H}_{\mathbf{w}}(2^{k+1}-n+1) +(-1)^n w_k^2 \widetilde{\mathcal{G}}_{\mathbf{w}}(2^{k+1}-n+1).
$$
The formula in (a) follows shortly.

We move on to case (b), where the considered $n \times n$ submatrix takes the form
\[\begin{NiceArray}{ccccccl}[right-margin=.4em]
\Block[borders={bottom,right,tikz=dashed}]{3-3}<\Large>{H_{\mathbf{w}}(2^k)} &  &  & w_k \mathbf{1} & \mathbf{0} & \mathbf{0} & 2^{k+1}-n+1 \\
 &  &  & w_k \mathbf{1} & \mathbf{0} & u_{k+1}M & 2n -3 \cdot 2^k -1 \\
 &  &  & w_k \mathbf{1} & u_{k+1} M & u_{k+1} J&  2^{k+1}-n \\[-1em]  \\ %tiny extra row to avoid overlap
w_k \mathbf{1}^T & w_k \mathbf{1}^T & w_k \mathbf{1}^T & u_{k+1} & \mathbf{0} & \mathbf{0} & 1 \\
\mathbf{0} & \mathbf{0} &  u_{k+1} M & \mathbf{0} & \mathbf{0} & \mathbf{0} & 2^{k+1}-n \\
\mathbf{0} & u_{k+1}M & u_{k+1}J &  \mathbf{0} & \mathbf{0} & -2u_{k+1} M & 2n - 3 \cdot 2^k-1
\CodeAfter
\SubMatrix[{1-1}{7-6}]
\end{NiceArray}.\]

By expanding with respect to the penultimate row and column of blocks, we get
\begin{align*}
\mathcal{H}_{\mathbf{w}}(n) = &(-1)^{2^{k+1}-n} u_{k+1}^{2^{k+2}-2n} \\
&\det \begin{NiceArray}{ccccl}[left-margin=.4em,right-margin=.4em]
\Block[borders={bottom,right,tikz=dashed}]{2-2}<\Large>{H_{\mathbf{w}}(n-2^k)} &   & w_k \mathbf{1} &  \mathbf{0} & 2^{k+1}-n+1 \\
 &  &  w_k \mathbf{1} &  u_{k+1}M & 2n -3 \cdot 2^k -1\\[-1em]  \\ %tiny extra row to avoid overlap
w_k \mathbf{1}^T &  w_k \mathbf{1}^T & u_{k+1} & \mathbf{0} & 1 \\
\mathbf{0} & u_{k+1}M &   \mathbf{0} & -2u_{k+1} M & 2n - 3 \cdot 2^k-1
\CodeAfter
\SubMatrix[{1-1}{5-4}]
\end{NiceArray}
\end{align*}.
We now add the last row of blocks multiplied by $1/2$ to the second row. Then $u_{k+1}/2 = w_{k+1}/2 - w_k$ is added at the positions in $H_{\mathbf{w}}(n-2^k)$ where $w_k$ appears, i.e., $w_k$ is replaced with $w_k' = w_{k+1}/2$. After this, we get
\[ \mathcal{H}_{\mathbf{w}}(n) = (-1)^n  u_{k+1}^{2^{k+2}-2n} \cdot (-2u_{k+1})^{2n-3 \cdot 2^k-1} \det \begin{NiceArray}{ccl}[left-margin=.4em,right-margin=.4em]
H_{\mathbf{w}'}(n-2^k) &  w_k \mathbf{1} &  n-2^k \\
   w_k \mathbf{1}^T & u_{k+1}   & 1 
   \CodeAfter
\SubMatrix[{1-1}{2-2}]
\end{NiceArray}, \]
and the remaining computations are similar as in part (a).
\end{proof}

We now state a polynomial specialization of Theorem \ref{thm:det_H_recurrence}, which will be  useful when computing and studying the properties of Hankel determinants $\mathcal{H}(n,t)$.  We introduce auxiliary polynomials in variables $t,x$, defined for $2^k < n \leq 2^{k+1}$ by
\begin{align*}
    h(n,t,x) &:= \mathcal{H}_{\mathbf{w}}(n),  \\ \widetilde{g}(n,t,x) &:= \widetilde{\mathcal{G}}_{\mathbf{w}}(n), \\
    \text{where } \mathbf{w} &= (1,t,\ldots,t^k,x).
\end{align*}
In particular, we get 
$$
   \mathcal{H}(n,t)= h(n,t,t^{k+1}), \qquad \mathcal{H}(n,1)=h(n,1,1).
$$
Theorem \ref{thm:det_H_recurrence} then takes the following form.

\begin{prop} \label{prop:h_g_recurrence}
    Let  $n \in \N_{\geq 5}$ and $k \in \N$ be such that $2^k < n \leq 2^{k+1}$. 
The following recursive formulas hold.
\begin{enumerate}[label=(\alph*)]
    \item If $2^k < n \leq 3 \cdot 2^{k-1}$, then
$$\begin{bmatrix}
    h(n,t,x) \\
    \widetilde{g}(n,t,x)
\end{bmatrix} = 
(-1)^n (x-2t^k)^{2n-2^{k+1}-1} \begin{bmatrix}
    -1 & \frac{(-1)^n t^{2k}}{x-2t^k} \\ 0 & 1
\end{bmatrix}
\begin{bmatrix}
   h(2^{k+1}-n+1,t,t^k) \\
\widetilde{g}(2^{k+1}-n+1,t,t^k)
\end{bmatrix}.
$$
     \item If $3 \cdot 2^{k-1} < n \leq 2^{k+1}$, then
    $$\begin{bmatrix}
    h(n,t,x) \\
    \widetilde{g}(n,t,x)
\end{bmatrix}  = 
2^{2n-3\cdot 2^k-1} (x-2t^k)^{2^k} \begin{bmatrix}
    1 & \frac{(-1)^n t^{2k}}{x-2t^k} \\ 0 & 1
\end{bmatrix}
\begin{bmatrix}
    h(n-2^k,t,\frac{x}{2}) \\
    \widetilde{g}(n-2^k,t,\frac{x}{2})
\end{bmatrix}.
$$
  \end{enumerate}
\end{prop}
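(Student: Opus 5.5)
This is a direct specialization of Theorem \ref{thm:det_H_recurrence}. We apply that theorem to the weight sequence $\mathbf{w}=(1,t,\ldots,t^k,x)$, so that $w_j=t^j$ for $j\le k$ and $w_{k+1}=x$. Since $2^k<n\le 2^{k+1}$, only $w_0,\ldots,w_{k+1}$ occur in $H_{\mathbf{w}}(n)$ and $\widetilde{G}_{\mathbf{w}}(n)$. Hence $h(n,t,x)=\mathcal{H}_{\mathbf{w}}(n)$ and $\widetilde g(n,t,x)=\widetilde{\mathcal{G}}_{\mathbf{w}}(n)$ are well defined. The scalar factors translate immediately:
\begin{itemize}
\item $w_{k+1}-2w_k=x-2t^k$;
\item $w_k^2=t^{2k}$, so $C_{\mathbf{w}}(n)=(-1)^n t^{2k}/(x-2t^k)$;
\item $A_{\mathbf{w}}(n)$ and $B_{\mathbf{w}}(n)$ become exactly the prefactors displayed in the proposition.
\end{itemize}

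The only real content is to identify the smaller determinants on the right-hand side with $h$ and $\widetilde g$ evaluated at the correct third argument.

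In case (a), let $m=2^{k+1}-n+1$. From $2^k<n\le 3\cdot 2^{k-1}$ we get $2^{k-1}+1\le m\le 2^k$, so $2^{k-1}<m\le 2^k$. Thus the relevant "top index" for $m$ is $k-1$, and $H_{\mathbf{w}}(m)$, $\widetilde G_{\mathbf{w}}(m)$ involve only the weights $w_0,\ldots,w_k=(1,t,\ldots,t^{k-1},t^k)$. By the definition of $h$ and $\widetilde g$ (with $k-1$ in place of $k$), these determinants are $h(m,t,t^k)$ and $\widetilde g(m,t,t^k)$. The weight $x$ does not appear there.

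In case (b), let $m=n-2^k$. Then $2^{k-1}<m\le 2^k$, so again the top index is $k-1$. The modified weights are
$$\mathbf{w}'=(w_0,\ldots,w_{k-1},\tfrac12 w_{k+1})=(1,t,\ldots,t^{k-1},x/2),$$
and therefore $\mathcal{H}_{\mathbf{w}'}(m)=h(m,t,x/2)$ and $\widetilde{\mathcal{G}}_{\mathbf{w}'}(m)=\widetilde g(m,t,x/2)$.

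Substituting these identifications into the vector formulas of Theorem \ref{thm:det_H_recurrence} gives both parts of the statement.

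The main point needing care is the bookkeeping of which weights actually occur for the smaller index. Concretely, one must check that $m\le 2^k$, so that no entry $s_{\mathbf{w}}(i+j)$ with $i+j\le 2m-2<2^{k+1}$ uses $w_{k+1}$. One must also check that $m>2^{k-1}$, so that the convention defining $h(m,\cdot,\cdot)$ places the free variable in position $k$. Both inequalities were verified above. The boundary case $m=2^k$ (e.g.\ $n=2^{k+1}$ in case (b)) is consistent, since $2^{k-1}<2^k\le 2^k$.

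A minor further point is that the formulas are polynomial identities, while $C_{\mathbf{w}}(n)$ has the denominator $x-2t^k$. This denominator is cleared by the positive power of $x-2t^k$ in the prefactor whenever the exponent is at least $1$. In case (a), that exponent is $2n-2^{k+1}-1\ge 1$, and the hypothesis $n\ge 5$ handles this. Otherwise the identity is read as one of rational functions, exactly as in Theorem \ref{thm:det_H_recurrence}.
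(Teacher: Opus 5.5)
Your proposal is correct and takes the same approach as the paper: the paper obtains this proposition by specializing Theorem \ref{thm:det_H_recurrence} to $\mathbf{w}=(1,t,\ldots,t^k,x)$, and your bookkeeping of which weights occur for the smaller index is exactly what that specialization needs. One minor point: in case (a), the bound $2n-2^{k+1}-1\ge 1$ follows from $n>2^k$, not from $n\ge 5$.
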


In the remainder of this section, we explore various implications of Theorem \ref{thm:det_H_recurrence} and Proposition \ref{prop:h_g_recurrence}.

\subsection{Special cases}
We start with the aforementioned polynomial generalization of \eqref{eq:simple_expression}. As before, we consider the sequence of indices $n_k = \lceil 2^{k+2}/3 \rceil$, which can be equivalently defined by $n_0 = 2$ and
\begin{equation} \label{eq:n_k}
    n_k = 2^{k+1}-n_{k-1}+1, \quad k \geq 1,
\end{equation} 
or $n_k = 2n_{k-1} - (k \bmod{2})$.

\begin{thm} \label{thm:nice_formula}
    For all $k \in \N$ we have
    $$ \mathcal{H}(n_k,t) = (-1)^{\frac{(k+1)(k+2)}{2}} t^{\alpha_k} (t-2)^{n_k-2} (1+t+ \cdots + t^k),$$
    where
  
    $$
    \alpha_k = (k-1)n_k+2 - \left\lfloor \frac{k+1}{2} \right\rfloor.
    $$
    In particular, Hankel determinants for the usual binary sum of digits satisfy
    $$ \mathcal{H}(n_k,1) = (-1)^{\frac{(k+2)(k+3)}{2}} (k+1).$$
\end{thm}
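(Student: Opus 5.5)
The plan is induction on $k$, using the recursion \eqref{eq:n_k}, $n_k = 2^{k+1}-n_{k-1}+1$. Since $n_k=\lceil 2^{k+2}/3\rceil$ satisfies $2^k < n_k \leq 3\cdot 2^{k-1}$ for $k\geq 2$, every step falls under case (a) of Proposition \ref{prop:h_g_recurrence}. That case sends $n_k$ exactly to $2^{k+1}-n_k+1 = n_{k-1}$. We will carry the pair $(h(n_k,t,x),\widetilde{g}(n_k,t,x))$ rather than just $\mathcal{H}(n_k,t)=h(n_k,t,t^{k+1})$, because case (a) substitutes $x=t^k$ into the previous level. The small cases ($n_0=2$, $n_1=3$, $n_2=6$, plus whatever is needed below $n\ge 5$) are checked by direct computation.

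\textbf{Step 1: $\widetilde{g}$.} First find a closed form for $\widetilde{g}(n_k,t,x)$. The second row of case (a) gives
$$\widetilde{g}(n_k,t,x)=(-1)^{n_k}(x-2t^k)^{2n_k-2^{k+1}-1}\,\widetilde{g}(n_{k-1},t,t^k).$$
Note that $2n_k-2^{k+1}-1=n_k-n_{k-1}$. Iterating, $\widetilde{g}(n_{k-1},t,t^k)$ is $\widetilde{g}(n_{k-1},t,x)$ with $x=t^k$, so it equals $\pm\prod_j (t^{j}-2t^{j-1})^{n_j-n_{j-1}}$. Hence
$$\widetilde{g}(n_k,t,x)=\sigma_k\,(x-2t^k)^{n_k-n_{k-1}}\,t^{\beta_{k-1}}(t-2)^{n_{k-1}-1},$$
with an explicit sign $\sigma_k$ and exponent $\beta_{k-1}$. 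Proposition \ref{prop:det_Gt} serves as a consistency check on the degrees.

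\textbf{Step 2: $h$.} The first row gives
$$h(n_k,t,x)=(-1)^{n_k}(x-2t^k)^{n_k-n_{k-1}-1}\Bigl[-(x-2t^k)\,h(n_{k-1},t,t^k)+(-1)^{n_k}t^{2k}\,\widetilde{g}(n_{k-1},t,t^k)\Bigr].$$
Specializing $x=t^{k+1}$ makes $x-2t^k=t^k(t-2)$. The inductive hypothesis for the previous level contains the factor $(1+\cdots+t^{k-1})$ together with $(t-2)^{n_{k-1}-2}$. The $\widetilde g$ term contributes $t^{2k}t^{\beta}(t-2)^{n_{k-1}-1}$. After factoring out the common power of $t$ and of $(t-2)$, the bracket should collapse to
$$\pm\, t^{c}(t-2)^{n_{k-1}-1}\Bigl[t(t-2)(1+\cdots+t^{k-1})\pm (\text{something like }t^{k}\text{ or }(1+\cdots))\Bigr].$$
This must simplify to a multiple of $(1+t+\cdots+t^k)$, for instance via $(t-1)(1+\cdots+t^{k-1})=t^k-1$ type identities. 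Equivalently, it is cleaner to make the induction statement a formula for $h(n_k,t,x)$ with general $x$, linear in $x$ inside the bracket (e.g.\ of the form $t^{a}(t-2)^{b}(x-2t^k)^{c}\,(P_k(t)x+Q_k(t))$). Plugging $x=t^k$ at the next level then gives a self-contained recursion for $P_k,Q_k$. So the first task is to compute $h(n_k,t,x)$ for $k\le 3$ symbolically, guess $P_k,Q_k$ (expecting them to be built from $1+\cdots+t^{k}$ and $t^k$), and then verify that the recursion preserves the guess.

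\textbf{Main obstacle.} The hard step is finding this generalized statement for $h(n_k,t,x)$ with free $x$ and showing the two terms combine. Along with it comes the bookkeeping of exponents and signs. The exponent of $(t-2)$ must come out as $n_k-2$. The power of $t$ must match $\alpha_k=(k-1)n_k+2-\lfloor (k+1)/2\rfloor$, which needs the identity $n_k+n_{k-1}=2^{k+1}+1$ and the parity of $k$ (via $n_k=2n_{k-1}-(k\bmod 2)$). The sign must match $(-1)^{(k+1)(k+2)/2}$, which needs $n_k \bmod 4$, periodic in $k$ with period $2$. Finally, setting $t=1$ gives $(t-2)^{n_k-2}=(-1)^{n_k}$ with $n_k$ having parity of $k\bmod 2$ adjusted appropriately. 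This yields the sign $(-1)^{(k+2)(k+3)/2}$ and the value $k+1$.
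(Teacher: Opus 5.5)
Your skeleton is the paper's: induction on $k$, using only case (a), with $\widetilde{\mathcal{G}}$ carried along. However, the proposal stops exactly at the inductive step. The remedy you propose, a free-$x$ statement with unknown $P_k,Q_k$ to be guessed from symbolic data, is left unspecified, and it is in fact unnecessary. Case (a) does not alter the weights. For $\mathbf{w}=(t^j)_{j\ge 0}$ it produces $h(n_{k-1},t,t^k)=\mathcal{H}(n_{k-1},t)$ and $\widetilde g(n_{k-1},t,t^k)=\widetilde{\mathcal{G}}(n_{k-1},t)$, which are precisely the quantities in the inductive hypothesis. So Theorem \ref{thm:det_H_recurrence}(a) reads directly
\[
\begin{bmatrix} \mathcal{H}(n_k,t) \\ \widetilde{\mathcal{G}}(n_k,t) \end{bmatrix}
=(-1)^{n_k}\bigl(t^k(t-2)\bigr)^{n_k-n_{k-1}}
\begin{bmatrix} -1 & (-1)^{n_k}\frac{t^k}{t-2} \\ 0 & 1 \end{bmatrix}
\begin{bmatrix} \mathcal{H}(n_{k-1},t) \\ \widetilde{\mathcal{G}}(n_{k-1},t) \end{bmatrix}.
\]

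The missing idea is to build the explicit companion formula
$\widetilde{\mathcal{G}}(n_k,t)=(-1)^{\frac{(k+2)(k+3)}{2}}t^{\alpha_k}(t-2)^{n_k-1}$
into the induction. That means pinning down your $\sigma_k$ and $\beta_{k-1}$ and noticing that the power of $t$ is the \emph{same} $\alpha_k$ as in $\mathcal{H}(n_k,t)$, while the power of $t-2$ is exactly one higher. Your product also has an index slip: the level-$j$ factor is $(t^{j+1}-2t^j)^{n_j-n_{j-1}}$. With $(t^j-2t^{j-1})$ the $t$-exponents would not match, and the two terms could not combine.

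With this formula, use $n_k\equiv k \pmod 2$ and $\frac{(k+1)(k+2)}{2}-\frac{k(k+1)}{2}=k+1$. The two entries of the bracket become $-(-1)^{\frac{k(k+1)}{2}}t^{\alpha_{k-1}}(t-2)^{n_{k-1}-2}(1+\cdots+t^{k-1})$ and $-(-1)^{\frac{k(k+1)}{2}}t^{\alpha_{k-1}}(t-2)^{n_{k-1}-2}\,t^k$. They have the same sign and the same powers, so they simply add to give the factor $1+t+\cdots+t^k$. There is no extra $t(t-2)$ in front of $1+\cdots+t^{k-1}$ as in your sketch, and no identity like $(t-1)(1+\cdots+t^{k-1})=t^k-1$ is needed.

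The rest follows routinely:
\begin{itemize}
\item the exponent from $\alpha_k=\alpha_{k-1}+k(n_k-n_{k-1})$, which is equivalent to $n_k=2n_{k-1}-(k\bmod 2)$;
\item the sign from $(-1)^{n_k+1+\frac{k(k+1)}{2}}=(-1)^{\frac{(k+1)(k+2)}{2}}$;
\item the $\widetilde{\mathcal{G}}$ formula from the second row of the recursion.
\end{itemize}
As written, your proposal identifies the right recursion but does not prove the theorem, because its central step is left as an open guess.
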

\begin{proof}
We use induction on $k$ to simultaneously prove the expression for $\mathcal{H}(n_k,t)$ and
$$\widetilde{\mathcal{G}}(n_k,t) = (-1)^{\frac{(k+2)(k+3)}{2}} \: t^{\alpha_k} (t-2)^{n_k-1}. $$
Our claim holds for $k=0,1$ by direct computation. 

Now, let $k \geq 2$ and assume that both formulas are true for $k-1$. Since $ 2^k < n_k \leq 3 \cdot 2^{k-1}$, by Theorem \ref{thm:det_H_recurrence}(a) we get
$$
\begin{bmatrix}
    \mathcal{H}(n_k,t) \\
    \widetilde{\mathcal{G}}(n_k,t)
\end{bmatrix} =
(-1)^{n_k} \left(t^k(t-2)\right)^{n_k-n_{k-1}}\begin{bmatrix}
    -1 & (-1)^{n_k} \frac{t^k}{t-2}\\
    0 & 1
\end{bmatrix}
\begin{bmatrix}
    \mathcal{H}(n_{k-1},t) \\
    \widetilde{\mathcal{G}}(n_{k-1},t)
\end{bmatrix}.
$$
Using the inductive assumption together with $n_k \equiv k \pmod{2}$ and  $\alpha_k =  \alpha_{k-1} + k(n_k - n_{k-1})$, we get the formulas for $k$.
\end{proof}

The main reason behind the existence of such a nice formula seems to be the fact that at each step of the induction, we used part (a) of Theorem \ref{thm:det_H_recurrence} so the weights $w_j=t^j$ did not change at any point.

In the next result, we give more complicated, although still compact formulas for $\mathcal{H}(2^m+\varepsilon,1)$ for $\varepsilon\in\{-1,0,1\}$.

\begin{prop} \label{prop:special_cases}
    For any $m \in \N_{\geq 2}$ we have
    $$\mathcal{H}(2^m,1) =  \sum_{j=1}^m \frac{2^{j-1}}{2^j-1} \prod_{j=1}^m (2^j-1)^{2^{m-j}},$$
    $$
    \mathcal{H}(2^m+1,1) =  -\left(1+\sum_{j=1}^m \frac{2^{j-1}}{2^j-1} \right)\prod_{j=1}^m (2^j-1)^{2^{m-j}},
    $$
    and
    $$ \mathcal{H}(2^m-1,1) = (2^{m-1}-1) \left(1 + \sum_{j=1}^{m-1} \frac{2^{j-1}}{2^{j}-1}\right) \prod_{j=1}^{m-2} (2^j-1)^{2^{m-j}} . $$
\end{prop}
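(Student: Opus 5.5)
We plan to apply Theorem \ref{thm:det_H_recurrence} with $\mathbf{w}$ kept general at first, and specialize to all-ones weights only at the end. The key observation is that for $n=2^m$ the recursion is always case (b). Each application replaces the top weight $w_{k+1}$ by $w_{k+1}/2$ in the position $k$, so after $r$ steps the weights are $(1,\ldots,1,2^{-r})$-type. Concretely, we track the pair $(\mathcal{H}_{\mathbf{w}}(2^j),\widetilde{\mathcal{G}}_{\mathbf{w}}(2^j))$ for weights $\mathbf{w}^{[j]}=(1,1,\ldots,1,x)$ (top weight $x$ at index $j$, all others $1$). For this family the recursion of Theorem \ref{thm:det_H_recurrence}(b) with $n=2^{j+1}$, $k=j$ reads
$$\begin{bmatrix}\mathcal{H}(2^{j+1};x)\\ \widetilde{\mathcal{G}}(2^{j+1};x)\end{bmatrix}=2^{2^j-1}(x-2)^{2^j}\begin{bmatrix}1&\frac{1}{x-2}\\0&1\end{bmatrix}\begin{bmatrix}\mathcal{H}(2^j;x/2)\\ \widetilde{\mathcal{G}}(2^j;x/2)\end{bmatrix},$$
where the sign is $(-1)^n=1$. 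Iterating from $x=1$ at level $m$ down produces top weights $2^{-r}$. First, we get $\widetilde{\mathcal{G}}$ as a pure product, because its row is triangular. Then $\mathcal{H}(2^m)$ equals that product times $\mathcal{H}(\text{base})/\widetilde{\mathcal{G}}(\text{base})+\sum 1/(x_r-2)$ telescoped through the levels. With $x_r=2^{-r}$ the scalars $x_r-2$ become $-(2^{r+1}-1)/2^{r}$.

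Rescaling, the factors $(2^j-1)$ and the additive terms $2^{j-1}/(2^j-1)$ should emerge. We can cross-check the product part against Theorem \ref{thm:det_G_2^k}: with the relation $\widetilde{\mathcal{G}}$ vs.\ $\mathcal{G}$ (Proposition \ref{prop:det_Gt}), the product $\prod_j(2^j-1)^{2^{m-j}}$ is exactly $\mathcal{G}_{\mathbf{w}}(2^m)$ for $w=(1,\dots,1)$, via $w_k-2^jw_{k-j}=1-2^j$. The base case ($n\le 4$, since the theorem needs $n\ge5$) we compute directly for weights $(1,\ldots,1,x)$, e.g.\ $n=4$ with $\mathbf{w}=(1,1,x)$.

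For $n=2^m+1$ we use case (a) with $k=m$: $2^{k+1}-n+1=2^m$, and $C=(-1)^{n}w_m^2/(w_{m+1}-2w_m)=-1/(1-2)\cdot(-1)=\ldots$. With all weights $1$, $u_{m+1}=-1$, and $A=(-1)^{2^m+1}(-1)^{1}=1$. Thus $\mathcal{H}(2^m+1,1)=-\mathcal{H}(2^m,1)+C\,\widetilde{\mathcal{G}}(2^m,1)$, giving the extra "$1+$" once $\widetilde{\mathcal{G}}(2^m,1)=\pm\prod_j(2^j-1)^{2^{m-j}}$ from the first step is inserted.

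For $n=2^m-1$ we use case (b) with $k=m-1$, reducing to $n-2^{m-1}=2^{m-1}-1$ with top weight halved. That is not of the clean form, so instead we reduce once more. $2^{m-1}-1$ lies in case (b) again, giving a chain $2^j-1$ down to a base. The top weight gets halved repeatedly but only at the current top index, so again the family $(1,\ldots,1,x)$ suffices. A parallel two-term recursion with prefactor $2^{2n-3\cdot2^k-1}(x-2)^{2^k}$ yields the product $\prod_{j\le m-2}(2^j-1)^{2^{m-j}}$ and the factor $(2^{m-1}-1)$, the latter coming from the last weight.

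The main obstacle is bookkeeping. We must correctly track the powers of $2$ coming from $B_{\mathbf{w}}$ against the powers of $2$ in the denominators $x_r-2$, and verify that they cancel as in the proof of Theorem \ref{thm:det_G_2^k}. We must also fix the signs. We would first do the $2^m$ case symbolically in $x$, proving by induction on $j$ a closed form for $(\mathcal{H},\widetilde{\mathcal{G}})(2^j;x)$, and then derive the other two from it.
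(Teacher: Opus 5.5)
Your proposal is correct and follows essentially the same route as the paper. You iterate Theorem~\ref{thm:det_H_recurrence}(b) (i.e.\ Proposition~\ref{prop:h_g_recurrence}(b) at $t=1$) on $2^m$ with weights $(1,\ldots,1,x)$ down to the base $n=4$, where $h(4,1,y)=(y-2)(3y-8)$ and $\widetilde{g}(4,1,y)=(y-2)^2(y-4)$; then case (a) gives $\mathcal{H}(2^m+1,1)=-\mathcal{H}(2^m,1)+\widetilde{\mathcal{G}}(2^m,1)$ with $\widetilde{\mathcal{G}}(2^m,1)=-\prod_{j=1}^m(2^j-1)^{2^{m-j}}$ (so your $\pm$ is a minus); and the analogous case-(b) chain $2^j-1\to 2^{j-1}-1$ down to $n=3$ gives the last formula.

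In that last chain, keep the general prefactor and $C$ as you propose. Since $n$ is odd, each step has prefactor $2^{2^k-3}(x-2)^{2^k}$ and $C=-1/(x-2)$, not the values of the $2^m$ chain. The paper's displayed intermediate formula for this case just repeats the $2^m$ one and is off as printed. The extra summand $j=m-1$ and the factor $2^{m-1}-1$ come from the base values $h(3,1,y)=3-y$ and $\widetilde{g}(3,1,y)=2-y$ at $y=2^{2-m}$.
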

\begin{proof}
All three formulas are valid for $m=2$, so assume $m \geq 3$.
By repeated application of Proposition \ref{prop:h_g_recurrence}(b), for $k \geq 2$ and $l \geq 1$ we get
\begin{equation} \label{eq:h_2kl}
\begin{bmatrix}
    h(2^{k+l},1,x) \\
    \widetilde{g}(2^{k+l},1,x) 
\end{bmatrix} = 
\frac{1}{2^l} \prod_{j=1}^l (2^{2-j}x-4)^{2^{k+l-j}} \begin{bmatrix}
   1  & \sum\limits_{j=0}^{l-1} \frac{1}{x/2^j-2} \\ 0 & 1
\end{bmatrix}
\begin{bmatrix}
    h(2^k,1,\frac{x}{2^l}) \\
    \widetilde{g}(2^k,1,\frac{x}{2^l}) 
\end{bmatrix}.
\end{equation}
Letting $k=2$, $l=m-2$, and $x=1$, we thus obtain
\begin{align*}
    h(2^m,1,1) = &\frac{1}{2^{m-2}}  \prod_{j=1}^{m-2} (2^{2-j}-4)^{2^{m-j}} \\ 
             &\left[h\left(4,1,\frac{1}{2^{m-2}}\right) + \widetilde{g}\left(4,1,\frac{1}{2^{m-2}}\right)\sum\limits_{j=0}^{m-3} \frac{2^{j-1}}{2^j-1} \right].
\end{align*} 
Taking into account that $h(4,1,y) = (y-2)(3y-8)$ and $\widetilde{g}(4,1,y) = (y-2)^2(y-4),$
after some algebraic manipulation we get the desired formula
\begin{equation} \label{eq:h_2m}
    \mathcal{H}(2^m,1) = h(2^m,1,1) =\sum_{j=1}^m \frac{2^{j-1}}{2^j-1} \prod_{j=1}^m (2^j-1)^{2^{m-j}}.
\end{equation}

To obtain the formula for $\mathcal{H}(2^m+1,1)$, we apply Proposition \ref{prop:h_g_recurrence}(a) to $k=m,  n=2^m+1$, and $t=x=1$, obtaining
\begin{equation} \label{eq:H_2m_plus_1_reduction}
h(2^m+1,1,1)= -h(2^m,1,1)+g(2^m,1,1).
\end{equation}
From \eqref{eq:h_2kl} we get
$$
g(2^m,1,1)= \frac{1}{2^{m-2}}
\prod_{j=1}^{m-2}(2^{2-j}-4)^{2^{m-j}}
\, g\!\left(4,1,\frac{1}{2^{m-2}}\right) = -\prod_{j=1}^m (2^j-1)^{2^{m-j}}.
$$
Substituting this and \eqref{eq:h_2m} into
\eqref{eq:H_2m_plus_1_reduction}, we get
$$
h(2^m+1,1,1)
=
-\left(1+\sum_{j=1}^m \frac{2^{j-1}}{2^j-1}\right)
\prod_{j=1}^m (2^j-1)^{2^{m-j}}.
$$
The proof of the last identity relies on the formula 
$$\begin{bmatrix}
    h(2^{k+l}-1,1,x) \\
    \widetilde{g}(2^{k+l}-1,1,x) 
\end{bmatrix} = 
\frac{1}{2^l} \prod_{j=1}^l (2^{2-j}x-4)^{2^{k+l-j}} \begin{bmatrix}
   1  & \sum\limits_{j=0}^{l-1} \frac{1}{x/2^j-2} \\ 0 & 1
\end{bmatrix}
\begin{bmatrix}
    h(2^k-1,1,\frac{x}{2^l}) \\
    \widetilde{g}(2^k-1,1,\frac{x}{2^l}) 
\end{bmatrix},$$
which again follows from Proposition \ref{prop:h_g_recurrence}(b). The details are left for the reader to verify.
\end{proof}

Based on the form of the values of $\mathcal{H}(2^m+k,1)$ for $k=-1, 0, 1$ given in Proposition \ref{prop:special_cases}, we formulate the following conjecture.

\begin{conj}
For every fixed integer $k\ge 0$ there exist integers $m_0=m_0(k)\ge 0$ and
$r=r(k)\ge 0$, a sign $\varepsilon_k\in\{\pm 1\}$, a polynomial
$P_k(X)\in \mathbb{Q}[X]$, and a constant $c_k\in \mathbb{Q}$ such that for all
$m\ge m_0(k)$ we have
\[
\mathcal H(2^m+k,1)
=
\varepsilon_k\, P_k(2^m)
\left(
c_k+\sum_{j=1}^{m-r}\frac{2^{j-1}}{2^j-1}
\right)
\prod_{j=1}^{m-r}(2^j-1)^{2^{m-r-j}}.
\]
Moreover, the polynomial $P_k(2^m)$ and the shift $r(k)$ are determined by the
binary expansion of $k$ and the sequence of ``branches'' encountered when repeatedly applying Theorem \ref{thm:det_H_recurrence}.
\end{conj}

Another interesting feature of the sequence $(\mathcal{H}(n,1))_{n \geq 1}$, suggested by experimental computations, is that the sequence of absolute values $(|\mathcal{H}(n,1)|)_{n \geq 2}$ contains long monotone increasing and decreasing subsequences. Moreover, local maxima and minima seem to occur at the indices $2^k+1$ and $n_k$ (defined by $\eqref{eq:n_k}$), respectively. Motivated by this observation, we formulate the following conjecture.

\begin{conj}
    The sequence of absolute values $(|\mathcal{H}(n,1)|)_{n \geq 2}$ is:
    \begin{enumerate}
        \item strictly increasing in each interval $[n_k,2^{k+1}+1]$;
        \item strictly decreasing in each interval $[2^{k+1}+1,n_{k+1}]$.
    \end{enumerate}
    In particular, for each $k \in \N$ we have
    $$ |\mathcal{H}(2^k+1,1)| = \max_{n \leq 2^{k}+1} |\mathcal{H}(n,1)|. $$
\end{conj}

\subsection{Degree and leading coefficient}

We now study the polynomials $\mathcal{H}(n,t)$ in terms of their degree and leading coefficient. In fact, we give a more general result concerning the weighted degree of $\mathcal{H}_\mathbf{w}(n)$.

Let $P$ be a polynomial in variables $w_0, w_1, \ldots, w_l$:
    $$P(w_0,w_1,\ldots,w_l) = \sum_{a=(a_0,a_1,\ldots,a_l)\in \mathcal{A}} c_a w_0^{a_0} w_1^{a_1} \cdots w_l^{a_l}, $$
    where $\mathcal{A} \subset \N^{l+1}$ is finite and  $c_a \in \C \setminus \{0\}$. 
   Its weighted degree $D(P)$, where $w_j$ is assigned weight $j$, is defined by
   $$D(P) :=  \max\left\{ \sum_{j=0}^l j a_j: \:  a \in \mathcal{A} \right\}.$$
   If there is precisely one monomial satisfying $D(c_a w_0^{a_0} w_1^{a_1} \cdots w_l^{a_l}) = D(P)$ we will say that it is the leading monomial of $P$ (this is not standard terminology).

Now, define
$$  D_n := D(\mathcal{H}_\mathbf{w}(n)). $$
   The weights are chosen in such a way that  $\deg_t \mathcal{H}(n,t)$ is expected to be equal to $D_n$.   
   The equality is not obvious due to potential cancellation in $\mathcal{H}(n,t)$, however, the following proposition shows that it indeed holds. 

\begin{prop} \label{prop:degree}
For $n \in \N_{\geq 2}$ we have the following:
\begin{enumerate}[label=(\alph*)]
\item for $k \in \N$ such that $2^k < n \leq 2^{k+1}$ we have
$$ D_n =\begin{cases}
   D_{2^{k+1}-n+1} +  (2n-2^{k+1}-1)(k+1) +1 &\text{if } n \leq 3 \cdot 2^{k-1}, \\
    D_{n-2^k} +2n+2^k(k-2) &\text{if } n > 3 \cdot 2^{k-1};
\end{cases}   $$
\item the leading coefficient of $\mathcal{H}(n,t)$ belongs to $\{1,-1\}$;
\item $\deg_t \mathcal{H}(n,t) = D_n$.
\end{enumerate}
\end{prop}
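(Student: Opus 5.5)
We argue by strong induction on $n$, proving (a), (b), (c) together with an auxiliary statement for $\widetilde{\mathcal{G}}_{\mathbf{w}}$: by Proposition \ref{prop:det_Gt}, $\widetilde{\mathcal{G}}_{\mathbf{w}}(n)$ is, up to sign, a product of $n-1$ factors $(w_l-2^{l-j}w_j)$ with $j<l$. Each such factor has leading monomial $w_l$ with coefficient $1$, so $\widetilde{\mathcal{G}}_{\mathbf{w}}(n)$ has a leading monomial with coefficient $\pm1$. Its weighted degree is $\widetilde D_n:=\sum l$ over the factors, and substituting $w_j=t^j$ gives $\deg_t\widetilde{\mathcal{G}}(n,t)=\widetilde D_n$ with leading coefficient $\pm1$. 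The point is that the factor $t^l-2^{l-j}t^j$ has degree exactly $l$ and is monic. For the base cases $2\le n\le 4$, and for $n$ up to the range where Theorem \ref{thm:det_H_recurrence} applies, I would compute directly and check that each $\mathcal{H}_{\mathbf{w}}(n)$ has a leading monomial with coefficient $\pm1$.

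For the induction step, suppose first $2^k<n\le 3\cdot 2^{k-1}$ and set $m=2^{k+1}-n+1<n$. Theorem \ref{thm:det_H_recurrence}(a) gives
$$\mathcal{H}_{\mathbf{w}}(n)=(-1)^{n+1}u_{k+1}^{2n-2^{k+1}-1}\mathcal{H}_{\mathbf{w}}(m)+u_{k+1}^{2n-2^{k+1}-2}w_k^2\,\widetilde{\mathcal{G}}_{\mathbf{w}}(m).$$
Here $u_{k+1}=w_{k+1}-2w_k$ has leading monomial $w_{k+1}$, and neither $\mathcal{H}_{\mathbf{w}}(m)$ nor $\widetilde{\mathcal{G}}_{\mathbf{w}}(m)$ involves $w_{k+1}$, since $m\le 2^k$. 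The first term therefore has weighted degree $D_m+(2n-2^{k+1}-1)(k+1)$. The second term has degree $\widetilde D_m+(2n-2^{k+1}-2)(k+1)+2k$. To get the claimed formula, with its extra $+1$, the comparison must work out so that one term strictly dominates. I expect $D_m\ge\widetilde D_m+k-1$ (a weak inequality, which I would add to the induction hypothesis), which would make the second term dominant with degree $D_m+(2n-2^{k+1}-1)(k+1)+1$ only if equality holds. I am unsure here, so I would rather compare the exponents of $w_{k+1}$: the second term carries one fewer power of $w_{k+1}$ but two extra powers of $w_k$. I will read the correct formula off the stated recursion. The plan is to add to the induction the exact identity $D_m=\widetilde D_m+k$ (equivalently, that the second term strictly dominates by $1$), checked against Proposition \ref{prop:det_Gt}(b) for the count of top-index factors. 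Then the leading monomial is unique and has coefficient $\pm1$.

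Now suppose $n>3\cdot2^{k-1}$. Theorem \ref{thm:det_H_recurrence}(b) expresses $\mathcal{H}_{\mathbf{w}}(n)$ as $B\,[\mathcal{H}_{\mathbf{w}'}(n-2^k)+C\,\widetilde{\mathcal{G}}_{\mathbf{w}'}(n-2^k)]$, where $\mathbf{w}'$ replaces $w_k$ by $w_{k+1}/2$. This substitution raises the weighted degree: each unit of $w_k$-degree becomes a unit of $w_{k+1}$-degree, gaining $1$ in weight and picking up a factor $2^{-1}$. I would track leading monomials through the substitution, using the induction hypothesis that the leading monomial of $\mathcal{H}_{\mathbf{w}}(n-2^k)$ maximizes the $w_k$-exponent. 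The powers of $2$ must cancel against $2^{2n-3\cdot 2^k-1}$ to leave coefficient $\pm1$, exactly as in the proof of Theorem \ref{thm:det_G}. This gives $D_n=D_{n-2^k}+(\text{number of }w_k\text{'s})+2^k(k+1)$ minus the corrections, which should simplify to $D_{n-2^k}+2n+2^k(k-2)$.

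The main obstacle is the non-cancellation of leading terms in the sums: the two terms of the recursion, and the change of leading monomial under $w_k\mapsto w_{k+1}/2$. I would handle it by strengthening the induction to assert the precise leading monomial, including its $w_k$-exponent, for both $\mathcal{H}_{\mathbf{w}}(n)$ and $\widetilde{\mathcal{G}}_{\mathbf{w}}(n)$. Part (c) then follows from (b): the leading monomial is unique, so under $w_j=t^j$ it is the only contribution to $t^{D_n}$, and its coefficient $\pm1$ is nonzero.
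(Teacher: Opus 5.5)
The overall skeleton is right: a joint induction, the unique $\pm1$ leading monomial of $\widetilde{\mathcal G}_{\mathbf w}$ from Proposition~\ref{prop:det_Gt}, and a comparison of the two terms of Theorem~\ref{thm:det_H_recurrence}. However, the invariant that decides the comparison is wrong. With $m=2^{k+1}-n+1\in(2^{k-1},2^k]$, the second term exceeds the first in weighted degree by $\widetilde D_m-D_m+k-1$. So the $+1$ in (a) requires $D_m=\widetilde D_m+k-2$; stated intrinsically, $D_n=\widetilde D_n+k_n-1$ whenever $2^{k_n}<n\le 2^{k_n+1}$. Your identity $D_m=\widetilde D_m+k$ is not ``equivalent'' to the second term dominating by $1$. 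It makes the \emph{first} term dominate, which would give (a) without the $+1$. It is also false: $D_4=\widetilde D_4=6$, since $\widetilde{\mathcal G}_{\mathbf w}(4)=(w_2-2w_1)^2(w_2-4w_0)$. For $n=5$ the two terms have degrees $9$ and $10$, so $D_5=10=D_4+3+1$. You also never say how the invariant propagates. The key observation is that $\widetilde{\mathcal G}_{\mathbf w}(n)$ obeys the same recursion with the same prefactor $A_{\mathbf w}(n)$ (resp.\ $B_{\mathbf w}(n)$). Hence once the $C_{\mathbf w}(n)\widetilde{\mathcal G}$-term is shown to dominate, $D_n-\widetilde D_n$ equals the weighted degree of $C_{\mathbf w}(n)=\pm w_k^2/u_{k+1}$, namely $k-1$, in both cases.

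In case (b) you track the wrong term, under a false hypothesis. The leading monomial of $\mathcal H_{\mathbf w}(n-2^k)$ need not maximize the $w_k$-exponent. For example, $\mathcal H_{\mathbf w}(3)=-w_0^2w_2+2w_0^2w_1+2w_0w_1^2-w_1^3$ has leading monomial $-w_1^3$. After $w_2\mapsto w_3/2$ (the step needed for $n=7$), both $-\tfrac12 w_0^2w_3$ and $-w_1^3$ have top weight $3$, so no unique leading monomial survives. Your count $D_{n-2^k}+(\#w_k)+2^k(k+1)$ is the degree of $B_{\mathbf w}(n)\mathcal H_{\mathbf w'}(n-2^k)$. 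That degree is at most $D_{n-2^k}+2n+2^k(k-2)-1$, one short of the target, so no ``corrections'' recover (a) from it.

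What is actually needed is the following.
\begin{itemize}
\item A crude upper bound for this term. In $H_{\mathbf w}(n-2^k)$, $w_k$ appears only where $i+j\ge 2^k$, i.e.\ in the last $2n-3\cdot 2^k-1$ rows. So the substitution raises the weighted degree by at most $2n-3\cdot 2^k-1$.
\item The exact behaviour of the other term. By Proposition~\ref{prop:det_Gt}(b), the leading monomial of $\widetilde{\mathcal G}_{\mathbf w}(n-2^k)$ contains exactly $2n-3\cdot 2^k-1$ copies of $w_k$. Its degree therefore rises by exactly that amount, with coefficient $2^{-(2n-3\cdot 2^k-1)}$, which cancels the power of $2$ in $B_{\mathbf w}(n)$.
\end{itemize}
With $D_{n-2^k}=\widetilde D_{n-2^k}+k-2$ and $D(C_{\mathbf w}(n))=k-1$, the $C\widetilde{\mathcal G}$-term wins by at least $1$. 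This yields both the second line of (a) and the $\pm1$ leading coefficient.
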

\begin{proof}
Let $E_n := D(\widetilde{\mathcal{G}}_\mathbf{w}(n))$.
   Let $n \geq 2$ and $k$ be such that $2^k < n \leq 2^{k+1}$. We simultaneously prove the following statements by induction:
   \begin{enumerate}[label=(\roman*)]
       \item $D_n = E_n +k-1$;
       \item $\mathcal{H}_\mathbf{w}(n)$ has a leading monomial and the coefficient of this monomial is $\pm1$.
   \end{enumerate}
  Both statements hold for $n < 5$, and thus let $n \geq 5$. We will use the notation $A_{\mathbf{w}}(n), B_{\mathbf{w}}(n), C_{\mathbf{w}}(n)$ from Theorem \ref{thm:det_H_recurrence}. In particular, we have
  \begin{align*}
      D(A_{\mathbf{w}}(n)) &= (2n-2^{k+1}-1)(k+1) \\
      D(A_{\mathbf{w}}(n) C_{\mathbf{w}}(n)) &= (2n-2^{k+1})(k+1)-2, \\
      D(B_{\mathbf{w}}(n)) &= 2^k(k+1), \\
      D(B_{\mathbf{w}}(n) C_{\mathbf{w}}(n)) &= (2^k+1)(k+1)-2.
  \end{align*}  
  If $2^k < n \leq 3 \cdot 2^{k-1}$, then $2^{k-1} < 2^{k+1}-n+1 \leq 2^k$ and the inductive assumption implies
  $$  D(A_{\mathbf{w}}(n) \mathcal{H}_\mathbf{w}(2^{k+1}-n+1)) = D(A_{\mathbf{w}}(n) C_{\mathbf{w}}(n) \widetilde{\mathcal{G}}_\mathbf{w}(2^{k+1}-n+1))-1. $$
  Hence, by Theorem \ref{thm:det_H_recurrence}(a) we get
  $$    D_n = D(A_{\mathbf{w}}(n) C_{\mathbf{w}}(n) \widetilde{\mathcal{G}}_\mathbf{w}(2^{k+1}-n+1)) = E_n + k-1. $$
  Combining these two equalities, we also obtain the first case in (a).

  To see that (ii) holds, note that by Proposition \ref{prop:det_Gt}, the polynomial $\widetilde{\mathcal{G}}_\mathbf{w}(2^{k+1}-n+1)$ has a leading monomial with coefficient $\pm 1$. This remains true for the polynomial $A_{\mathbf{w}}(n) C_{\mathbf{w}}(n) \widetilde{\mathcal{G}}_\mathbf{w}(2^{k+1}-n+1)$, and its leading monomial becomes the leading monomial in $\mathcal{H}_\mathbf{w}(n)$ (up to sign).
 
  In the case $3 \cdot 2^{k-1} < n \leq 2^{k+1}$ we have $2^{k-1} < n-2^k \leq 2^k$. We need a bit more caution when comparing weighted degrees of the polynomials $B_{\mathbf{w}}(n) \mathcal{H}_\mathbf{w'}(n-2^k)$ and $B_{\mathbf{w}}(n) C_{\mathbf{w}}(n) \widetilde{\mathcal{G}}_\mathbf{w'}(n-2^k)$ appearing in Theorem \ref{thm:det_H_recurrence}(b). By Proposition \ref{prop:det_Gt}, the leading monomial in $\widetilde{\mathcal{G}}_\mathbf{w}(n-2^k)$ contains $w_k$ with multiplicity $2n-3 \cdot 2^k-1$. Substituting $w_k \mapsto \frac{1}{2} w_{k+1}$ (which corresponds to $\mathbf w \mapsto \mathbf{w}'$) maps the leading monomial in $\widetilde{\mathcal{G}}_\mathbf{w}(n-2^k)$ to the leading monomial in $\widetilde{\mathcal{G}}_\mathbf{w'}(n-2^k)$. It follows that
  $$ D(\widetilde{\mathcal{G}}_\mathbf{w'}(n-2^k)) = E_{n-2^k} + 2n-3 \cdot 2^k-1. $$
  At the same time, $w_k$ occurs in $\mathcal{H}_\mathbf{w}(n-2^k)$ precisely at positions $(i,j)$ such that $i+j \geq 2^k$, which are contained in a square submatrix of size $2n-3 \cdot 2^k-1$. Hence, after the substitution $w_k \mapsto \frac{1}{2} w_{k+1}$
  we get
  $$ D(\mathcal{H}_\mathbf{w'}(n-2^k)) \leq D_{n-2^k} + 2n-3 \cdot 2^k-1. $$
  Together with the inductive assumption, this leads to
  $$ D(B_{\mathbf{w}}(n) \mathcal{H}_\mathbf{w'}(n-2^k)) < D(B_{\mathbf{w}}(n) C_{\mathbf{w}}(n) \widetilde{\mathcal{G}}_\mathbf{w'}(n-2^k)).  $$
 The remainder of the argument is similar to that before.

 Parts (b) and (c) follow directly from the statement (ii), which ensures that there is no cancellation of terms of degree $D_n$ after substituting $w_j \mapsto t^j$.
\end{proof}

As it turns out, there is a curious connection between the sequence of degrees $D_n$ and sums of sums of digits.  
For $n \in \N$ we let 
$$ S_n = \sum_{\ell=0}^n s(\ell), $$
where $s$ denotes the sum of binary digits of $n$.

\begin{cor}
Let $n \in \N_{\geq 2}$ and $k \in \N$ be such that $2^k < n \leq 2^{k+1}$. Then 
    $$  D_n = S_{n-1}+S_{n-2}+k-1. $$
\end{cor}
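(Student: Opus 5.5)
The plan is to prove the formula by strong induction on $n$, verifying that the right-hand side $T_n := S_{n-1}+S_{n-2}+k-1$ satisfies the same recursion as $D_n$ in Proposition \ref{prop:degree}(a). Here $k=k(n)$ is defined by $2^k<n\le 2^{k+1}$. The base cases are the small $n$ (say $2\le n\le 4$, where the recursion in Proposition \ref{prop:degree}(a) reaches indices below $2$ or is not covered). For these we compute $D_n$ directly from $\mathcal H(n,t)$, which equals $\deg_t$ by Proposition \ref{prop:degree}(c): e.g.\ $D_2=\deg(-t^2)=2$ and $T_2=S_1+S_0+0-1=0$. Since the two disagree, the convention for $k$ at $n=2$ may need care. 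I will first tabulate the small cases and adjust: if a base case genuinely fails, the statement presumably intends $n\ge 3$ or the induction starts at $n=3,4$ with direct checks.

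The main tool is the classical block identity for sums of digits. For $0\le m\le 2^k$ we have $s(2^k+j)=1+s(j)$ for $0\le j<2^k$, hence
\[
S_{2^k+m-1}=S_{2^k-1}+m+S_{m-1},\qquad S_{2^k-1}=k2^{k-1}.
\]
We also use the reflection $s(2^{k}-1-j)=k-s(j)$, valid for $0\le j<2^k$. This gives $S_{2^k-1}-S_{2^k-1-m}=km-S_{m-1}$.

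\emph{Case $n>3\cdot2^{k-1}$.} Put $m=n-2^k$, so $2^{k-1}<m\le 2^k$ and $k(m)=k-1$. By the shift identity,
\[
S_{n-1}+S_{n-2}=2k2^{k-1}+(2m-1)+S_{m-1}+S_{m-2}.
\]
Hence
\[
T_n-T_m=k2^k+2m-1+1=k2^k+2n-2^{k+1}=2n+2^k(k-2).
\]
This matches the second line of Proposition \ref{prop:degree}(a). Note the $+1$ comes from $k(n)-k(m)=1$.

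\emph{Case $2^k<n\le3\cdot2^{k-1}$.} Put $m=2^{k+1}-n+1$, so $2^{k-1}<m\le 2^k$ (the boundary $m=2^k$ occurs when $n=2^k+1$) and $k(m)=k-1$. Write $n=2^k+r$ with $1\le r\le 2^{k-1}$, so $m=2^k-r+1$. The shift identity gives
\[
S_{n-1}+S_{n-2}=k2^k+2r-1+S_{r-1}+S_{r-2}.
\]
The reflection gives
\[
S_{m-1}=S_{2^k-r}=k2^{k-1}-k(r-1)+S_{r-2},\qquad S_{m-2}=k2^{k-1}-kr+S_{r-1}.
\]
Subtracting,
\[
T_n-T_m=2r-1+k(2r-1)+1=(2r-1)(k+1)+1.
\]
Since $2r-1=2n-2^{k+1}-1$, this is exactly the first line of Proposition \ref{prop:degree}(a). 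Combining both cases with the induction hypothesis $D_m=T_m$ yields $D_n=T_n$.

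The main obstacle is bookkeeping: getting the reflection identities right at the edges ($r=1$, where $S_{-1}=0$ must be used, and $m=2^k$), and handling the base cases and the $n\ge5$ hypothesis under which Proposition \ref{prop:degree}(a) was derived. I would check $n=5,\dots,8$ numerically against both sides before trusting the edge conventions.
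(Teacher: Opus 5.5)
Your inductive step is correct and is essentially the paper's proof. Both arguments show that $T_n$ obeys the two recurrences of Proposition \ref{prop:degree}(a), using the shift identity $S_{2^k+m-1}=k2^{k-1}+m+S_{m-1}$ and the complement symmetry $s(\ell)+s(2^j-1-\ell)=j$. The paper reflects $S_{n-1},S_{n-2}$ directly at level $k+1$. You instead write $n=2^k+r$ and reflect $S_{m-1},S_{m-2}$ at level $k$, which is only a bookkeeping difference. Your increment computations are right, including the edge case $r=1$ with $S_{-1}=0$. They are valid for every $n\ge 3$, so $n=2$ is the only base case you actually need.

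The one real error is that base case. You have $H(2,t)=\begin{bmatrix}0&1\\1&t\end{bmatrix}$, so $\mathcal{H}(2,t)=-1$, not $-t^2$. In general $\mathcal{H}_\mathbf{w}(2)=-w_0^2$, and $w_0$ carries weight $0$. Hence $D_2=0=T_2$, and the statement holds at $n=2$, exactly as the paper's proof uses.

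Your fallback of ``starting the induction at $n=3,4$'' could not have rescued anything. Every $n\ge 3$ reduces to the index $2$ under the recursion, and $D$ and $T$ have identical increments, so $D_n-T_n=D_2-T_2$ for all $n$. Had $D_2$ really been $2$, the corollary would fail for every $n$, not just at $n=2$.
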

\begin{proof}
Put $D_n' = S_{n-1}+S_{n-2} + k-1$ for  $2^k < n  \leq 2^{k+1}$. We have $D_2 = D_2' = 0$ so it suffices to show that $D_n'$ satisfies the same recurrence relations as in Proposition \ref{prop:degree}(a).

We recall two identities satisfied by $S_n$. Using the symmetry
$$ s(\ell)+s(2^{k+1}-1-\ell)=k+1, \qquad 0\leq \ell<2^{k+1},$$
for $m < 2^{k+1}-1$, one obtains
\begin{equation}\label{eq:S_identity1}
    S_m = S_{2^{k+1}-m-2} + (k+1)(m-2^k+1).
\end{equation}
Moreover, for $m < 2^k$ we have
\begin{equation}\label{eq:S_identity2}
  S_{2^k+m} = S_{2^k-1} + \sum_{\ell=0}^m s(2^k+\ell)  =  k 2^{k-1} + (m+1) + S_m.
\end{equation}

Now, suppose that  $2^k < n \leq 3 \cdot 2^{k-1}$. Applying \eqref{eq:S_identity1} to $m=n-1, n-2$, we get
\begin{align*}
    D_n' &= S_{2^{k+1}-n-1}+S_{2^{k+1}-n}+ (k+1)(2n-2^{k+1}-1)+k-1 \\
    &= D_{2^{k+1}-n+1}' + (k+1)(2n-2^{k+1}-1) + 1.
\end{align*} 
If $3 \cdot 2^{k-1} < n \leq 2^{k+1}$, then \eqref{eq:S_identity2} applied to $m=n-2^k-1, n-2^k-2$, gives
$$ D_{n}' = k 2^k + 2n - 2^{k+1}-1 + S_{n-2^k-1} + S_{n-2^k-2} + k-1 = 2n + 2^k(k-2) +D_{n-2^k}',  $$
which ends the proof.
\end{proof}

We do not know whether this identity is purely coincidental or has a deeper reason.

\subsection{Factorization of $\mathcal{H}(n,t)$}
In the next theorem, we describe the general structure of the polynomials $\mathcal{H}(n,t)$. It turns out that after dividing $\mathcal{H}(n,t)$ by the factors $t$ and $t-2$ (with multiplicity), all coefficients of the remaining factor have a common sign.

\begin{thm} \label{thm:factorization}
    For $n \in \N_{\geq 2}$ we have
    $$ \mathcal{H}(n,t) = \sigma(n) t^{\beta(n)} (2-t)^{n-2}\hat{\mathcal{H}}(n,t), $$
    where $\hat{\mathcal{H}}(n,t) \in \Z[t]$ is a monic polynomial with all coefficients positive, and $\sigma(n) \in \{1,-1\}$, $\beta(n) \in \N$ satisfy 
    \begin{align*}
        \sigma(2) &=-1, \\
        \sigma(n) &= \begin{cases}
    (-1)^n \sigma(2^{k+1}-n+1) &\text{if } 2^k < n \leq 3 \cdot 2^{k-1}, \\
   \sigma(n-2^k) &\text{if } 3 \cdot 2^{k-1} < n \leq 2^{k+1},
\end{cases}  \\
\beta(2) &= 0, \\
\beta(n) &= \begin{cases}
    \beta(2^{k+1}-n+1)+k(2n-2^{k+1}-1) &\text{if } 2^k < n \leq 3 \cdot 2^{k-1}, \\
   \beta(n-2^k) + k 2^k &\text{if } 3 \cdot 2^{k-1} < n \leq 2^{k+1},
\end{cases}
    \end{align*}
    
\end{thm}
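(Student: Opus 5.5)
The plan is to prove the statement by strong induction on $n$, using Proposition \ref{prop:h_g_recurrence}. I will prove it together with an analogous statement for $\widetilde{\mathcal{G}}$, and in a form strong enough to survive the substitution $x\mapsto x/2$ in part (b).

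\textbf{Why the statement must be strengthened.} Iterating part (b) replaces the top weight $t^{k+1}$ by $t^{k+1}/2$, then $t^{k+1}/4$, and so on. An application of part (a) resets the top weight to $t^k$. So the family that is closed under the recursion is the following. For $2^{k}<m\le 2^{k+1}$ and $r\in\N$ put
$$x_r:=2^{-r}t^{k+1+r}.$$
The claim I will prove for all such $m$ and $r$ is
$$h(m,t,x_r)=\sigma\, c\, t^{\beta}(2-t)^{m-2}P(t),\qquad \widetilde g(m,t,x_r)=\sigma'\, c'\, t^{\beta'}(2-t)^{m-1}Q(t).$$
Here $c,c'$ are positive rationals (powers of $2$), $P,Q$ have nonnegative coefficients, and $P,Q$ are not divisible by $t$. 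For $r=0$ the claim should give exactly $\hat{\mathcal H}$ monic with positive coefficients, and the recursions for $\sigma(n)$ and $\beta(n)$ should be as in the statement. Monicity comes for free from Proposition \ref{prop:degree}(b), since the leading coefficient of $\mathcal H(n,t)$ is $\pm1$. Integrality of $\hat{\mathcal H}$ for $r=0$ follows from Gauss's lemma: $\mathcal H(n,t)\in\Z[t]$, and $(2-t)^{n-2}t^{\beta}$ is primitive up to sign.

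\textbf{Key observation.} Every multiplier appearing in the recursion factors nicely:
$$x_r-2t^{k}=2^{-r}t^{k}\bigl(t^{r+1}-2^{r+1}\bigr)=-2^{-r}t^{k}(2-t)\bigl(2^{r}+2^{r-1}t+\cdots+t^{r}\bigr).$$
The last factor has positive coefficients. Likewise, by Proposition \ref{prop:det_Gt}, $\widetilde g$ is, up to sign, a product of $n-1$ factors of the shape $w_l-2^{l-j}w_j$. After the specialization each such factor is $t^{j}$ times a power of $2$ times $(t-2)$ times a positive polynomial. This already gives the $\widetilde g$-part of the claim, including the exact exponent $m-1$ of $(2-t)$.

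\textbf{Induction step for $h$.} Both cases (a) and (b) of Proposition \ref{prop:h_g_recurrence} have the form
$$h(n)=\text{(prefactor)}\Bigl[\pm h(n')+\frac{(-1)^n t^{2k}}{x-2t^k}\,\widetilde g(n')\Bigr].$$
Since $\widetilde g(n')$ carries one more factor $(2-t)$ than $h(n')$, the quotient $\widetilde g(n')/(x-2t^k)$ has the same power of $(2-t)$ as $h(n')$. Hence the bracket equals $(2-t)^{n'-2}$ times a combination $\epsilon_1 a P+\epsilon_2 b\, t^{\gamma}Q$ with $a,b>0$. The prefactor contributes $(2-t)^{2n-2^{k+1}-1}$ in case (a) and $(2-t)^{2^k}$ in case (b). Adding exponents gives exactly $n-2$ in both cases, and counting powers of $t$ gives the recursion for $\beta$.

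\textbf{Main obstacle: the sign check.} I expect the real difficulty to be showing $\epsilon_1=\epsilon_2$, so that no cancellation of coefficients occurs. Otherwise positivity fails. I will make the inductive hypothesis record the relative sign $\sigma'/\sigma$ explicitly, presumably as a function of $n\bmod 4$ and of the case. I will then check in each of the cases (a) and (b) that
$$(-1)^n\cdot\sigma'(n')\cdot\operatorname{sgn}\text{ of the leading factor of }(x-2t^k)^{-1}$$
matches $\mp\sigma(n')$, and that the relation is propagated to $(\sigma(n),\sigma'(n))$. I would first compute these signs directly for small $n$ (say $n\le 16$) to guess the correct invariant. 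The base cases $n<5$, and $r$ arbitrary where needed, will be verified by direct computation. The $\beta$ claim also requires that the $t$-powers of the two summands agree (or that the lower one has nonzero constant term after factoring); I will track this alongside the sign.
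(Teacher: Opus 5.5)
Your plan is the paper's proof. The family $x_r=2^{-r}t^{k+1+r}$ is exactly the paper's $t^{k+l}/2^{l-1}$ with $l=r+1$. The induction runs jointly over $h$ and $\widetilde g$, with base cases $n\le 4$ checked for all $r$, and your factorization of $x_r-2t^k$ is the paper's $r_l(t)$ device.

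However, you stop exactly at the step that carries the content, and you leave the sign invariant open. The paper builds it into the inductive hypothesis:
\[
\widetilde g(n,t,x_r)=(-1)^{n+1}\sigma(n)\,t^{\beta(n)}(2-t)^{n-1}\hat g_r(n,t).
\]
Here the exponent $\beta(n)$ is the \emph{same} as for $h$, and the relative sign depends only on the parity of $n$, not on $n \bmod 4$ or on the case. Proposition~\ref{prop:det_Gt} gives $\widetilde g$ only up to sign and with an unidentified power of $t$, so this must come from the recursion.

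The invariant propagates directly. Write $x_r-2t^k=-2^{-r}t^k(2-t)R_r(t)$ with $R_r$ having positive coefficients. Then $\frac{(-1)^nt^{2k}}{x_r-2t^k}\widetilde g(n')$ has sign $(-1)^{n+n'}\sigma(n')$.

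In case (a), $n'=2^{k+1}-n+1$ has parity opposite to $n$, so this sign is $-\sigma(n')$, matching the sign of $-h(n')$. The prefactor has odd exponent $2n-2^{k+1}-1$, hence sign $(-1)^{n+1}$. This gives $\sigma(n)=(-1)^n\sigma(n')$. Also $\widetilde g(n)$ gets sign $(-1)^{n+1}(-1)^{n'+1}\sigma(n')=-\sigma(n')=(-1)^{n+1}\sigma(n)$.

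In case (b), $n'=n-2^k\equiv n\pmod 2$, so both summands have sign $\sigma(n')$. The exponent $2^k$ is even, so $\sigma(n)=\sigma(n')$ and the invariant persists.

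For $\beta$: the $\widetilde g$-summand carries an extra factor $t^k$, and $\hat h(n')$ has nonzero constant term. So the bracket has $t$-adic valuation exactly $\beta(n')$, which yields the $\beta$-recursion.

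Two further points.

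First, nonnegative coefficients together with $t\nmid P$ do not give ``all coefficients positive''. The paper also carries $\deg\hat h_r(n,t)\ge k$ for $2^k<n\le 2^{k+1}$. Then $R_r\hat h(n')$ has positive coefficients in every degree $0,\dots,k-1$, while $t^k\hat g(n')$ covers every degree from $k$ upward, so the sum has no gaps. You need something of this kind.

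Second, your base-case check will give $\mathcal H(4,1)=5$, i.e.\ $\sigma(4)=+1$. The displayed recursion (second case, $k=1$) would instead give $\sigma(4)=\sigma(2)=-1$. The $\sigma$-recursion comes from Proposition~\ref{prop:h_g_recurrence}, so it holds only for $n\ge 5$, with $\sigma(3)=\sigma(4)=1$ as initial values. This is the same $n=4$ anomaly as $\varepsilon(4)$ in Proposition~\ref{prop:det_G_recurrence}. The paper's proof has exactly this scope as well; its own base-case formula for $h(4,\cdot)$ carries a plus sign.
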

\begin{proof}
We prove by induction that for $2^k < n \leq 2^{k+1}$ the following two statements hold:
\begin{enumerate}[label=(\roman*)]
    \item for all $l \geq 1$ there is a factorization
    \begin{align*}
h \left(n,t,\frac{t^{k+l}}{2^{l-1}}\right) &= \sigma(n) t^{\beta_n} (2-t)^{n-2} \hat{h}_l(n,t), \\
\widetilde{g} \left(n,t,\frac{t^{k+l}}{2^{l-1}}\right) &= (-1)^{n+1} \sigma(n) t^{\beta_n} (2-t)^{n-1} \hat{g}_l(n,t),
\end{align*}
where $\hat{h}_l(n,t),\hat{g}_l(n,t)  \in \Q[t]$ have all coefficients non-negative;
\item $\deg \hat{h}_l(n,t) \geq k$.
\end{enumerate}
Along the way, we will also obtain the relations for $\sigma(n)$ and $\beta(n)$.

We are going to use the notation
$$  r_l(t) = t^{l-1} + 2t^{l-2} + \cdots + 2^{l-2}t + 2^{l-1} = \frac{t^l-2^l}{t-2}. $$
For $n=2,3,4$ our claim holds because
\begin{align*}
    h \left(2,t,\frac{t^{l}}{2^{l-1}}\right) &=  -1,\\
    h\left(3,t,\frac{t^{1+l}}{2^{l-1}}\right) &=  2 t + 2 t^2 - t^3 - \frac{t^{1+l}}{2^{l-1}} =  t(2-t)\left(t+ \frac{1}{2^{l-1}}r_l(t) \right),\\
    h\left(4,t,\frac{t^{1+l}}{2^{l-1}}\right) &=  -\left(2 t-\frac{t^{1+l}}{2^{l-1}}\right) \left(\frac{t^{3+l}}{2^{l-1}}+ \frac{t^{1+l}}{2^{l-2}}-4 t^2-4 t\right) \\
    &= \frac{1}{2^{2l-3}} t^2 (2-t)^2  r_l(t) \Bigl(2r_l(t) +t r_{l+1}(t) \Bigr),
\end{align*}
and
\begin{align*}
    \widetilde{g} \left(2,t,\frac{t^{l}}{2^{l-1}}\right) &= 2- \frac{t^{l}}{2^{l-1}} = \frac{1}{2^{l-1}} (2-t) r_l(t),\\
    \widetilde{g}\left(3,t,\frac{t^{1+l}}{2^{l-1}}\right) &=  (2-t)\left(2t- \frac{t^{1+l}}{2^{l-1}} \right) = \frac{1}{2^{l-1}} t(2-t)^2 r_l(t),\\
    \widetilde{g}\left(4,t,\frac{t^{1+l}}{2^{l-1}}\right) &=  \left(2t-\frac{t^{1+l}}{2^{l-1}}\right)^2 \left(\frac{t^{1+l}}{2^{l-1}}-4 \right) \\
    &= -\frac{1}{2^{2l-2}} t^2 (2-t)^3 r_{l}(t) r_{l+1}(t).
\end{align*}

Hence, assume that $n \geq 5$. 
 In the case $2^k < n \leq 3 \cdot 2^{k-1}$ put $n'= 2^{k+1}-n+1 \in (2^{k-1},2^k]$. Then Proposition \ref{prop:h_g_recurrence}(a) together with the inductive assumption gives
 \begin{equation} \label{eq:h_nli}
\begin{aligned} 
     h \left(n,t,\frac{t^{k+l}}{2^{l-1}}\right) &=
     (-1)^n  \left(\frac{t^{k+l}}{2^{l-1}} - 2t^k \right)^{2n-2^{k+1}-2} \times\\
     &\phantom{=}\left[-\left(\frac{t^{k+l}}{2^{l-1}} - 2t^k\right)h(n',t,t^k) + (-1)^n t^{2k} \widetilde{g}(n',t,t^k)\right] \\
     &= (-1)^n  \sigma(n') t^{k(2n-2^{k+1}-1)+\beta(n')} (2-t)^{2n-2^{k+1}+n'-3}  \times\\ &\phantom{=}\left(\frac{r_l(t)}{2^{l-1}}  \right)^{2n-2^{k+1}-2}  \left[\frac{r_l(t)}{2^{l-1}}   \hat{h}_1(n',t) + (-1)^{n+n'+1}t^k \hat{g}_1(n',t) \right].
 \end{aligned}  
 \end{equation}
 Since $n+n'+1$ is even, we get the polynomial
 $$  \hat{h}_l(n,t) = \left(\frac{r_l(t)}{2^{l-1}}   \right)^{2n-2^{k+1}-2} \left[\frac{r_l(t)}{2^{l-1}}   \hat{h}_1(n',t) + t^k \hat{g}_1(n',t) \right], $$
which has degree $\geq k$ and all coefficients positive. From \eqref{eq:h_nli} we also get the desired relations for $\sigma(n)$, $\beta(n)$ and note that the exponent of $(2-t)$ is $n-2$. Statement (i) for $\widetilde{g}$ is proved in a similar way.

 In the case $3 \cdot 2^{k-1} < n \leq 2^{k+1}$ put $n'= n-2^k$ so that by Proposition \ref{prop:h_g_recurrence}(b) we get
 \begin{align*}
     h \left(n,t,\frac{t^{k+l}}{2^{l-1}}\right) &=
    2^{2n-3 \cdot 2^k-1} \left(\frac{t^{k+l}}{2^{l-1}} - 2t^k \right)^{2^k-1} \times\\
     &\phantom{=}\left[\left(\frac{t^{k+l}}{2^{l-1}} - 2t^k\right)h\left(n',t,\frac{t^{k+l}}{2^{l}}\right) + (-1)^n t^{2k} \widetilde{g}\left(n',t,\frac{t^{k+l}}{2^{l}}\right)\right] \\
     &=  \sigma(n') t^{k 2^k + \beta(n')} (2-t)^{2^k+n'-2}  \times\\
     &\phantom{=} 2^{2n-3 \cdot 2^k-1} \left(\frac{r_l(t)}{2^{l-1}}  \right)^{2^k-1}
      \left[ \frac{r_l(t)}{2^{l-1}}   \hat{h}_{l+1}(n',t) + (-1)^{n+n'}t^k \hat{g}_{l+1}(n',t) \right].
  \end{align*}  
 The polynomial in the last line  is $\hat{h}_l(n,t)$, which again has the desired properties. As before, the computation for $\widetilde{g}$ is analogous, and the induction is finished.

 Now, in the main statement of the proposition we take
 $$ \hat{\mathcal{H}}(n,t) = \hat{h}_1(n,t), $$
 which is a polynomial with all coefficients positive, and belongs to $\Z[t]$ because $\mathcal{H}(n,t) \in \Z[t]$. Moreover, it is monic due to Proposition \ref{prop:degree}, which ends the proof.
 \end{proof}

We note that the factor $\hat{\mathcal{H}}(n,t)$ in the proposition is usually reducible. One can check that for given $l \geq 2$ we often have the divisibility $\frac{t^l-2^l}{t-2} \mid \hat{\mathcal{H}}(n,t)$ (with some multiplicity).  Also, Theorem \ref{thm:nice_formula} implies that $\frac{t^{k+1}-1}{t-1} \mid \hat{\mathcal{H}}(n_k,t)$. In Section \ref{sec:vanishing}, we study this in more detail.

\subsection{Sign behavior}

We now take a closer look at the sequence of signs $\sigma(n)$ introduced in Theorem \ref{thm:factorization}. In particular, the proposition implies that Hankel determinants for the usual binary sum of digits are nonzero.

\begin{cor}
    For all $n \in \N_{\geq 2}$ the Hankel determinant $\mathcal{H}(n,1)$ is nonzero and we have
    $$  \sgn \mathcal{H}(n,1) = \sigma(n). $$
\end{cor}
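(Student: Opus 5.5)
The plan is to specialize the factorization of Theorem \ref{thm:factorization} at $t=1$. For $n \geq 2$ that theorem gives
$$ \mathcal{H}(n,1) = \sigma(n)\cdot 1^{\beta(n)} \cdot (2-1)^{n-2}\, \hat{\mathcal{H}}(n,1) = \sigma(n)\, \hat{\mathcal{H}}(n,1). $$
Everything therefore reduces to showing that $\hat{\mathcal{H}}(n,1)$ is a positive number. Once that is known, nonvanishing of $\mathcal{H}(n,1)$ and the identity $\sgn \mathcal{H}(n,1) = \sigma(n)$ follow at once, since $\sigma(n)\in\{1,-1\}$.

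For positivity, recall that Theorem \ref{thm:factorization} asserts that $\hat{\mathcal{H}}(n,t)\in\Z[t]$ is monic and has all coefficients positive, or at least non-negative as in statement (i) of its proof. Evaluating at $t=1$ gives the sum of the coefficients. This is a sum of non-negative integers, and it contains the leading coefficient $1$, so $\hat{\mathcal{H}}(n,1)\geq 1>0$.

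The one point that needs care is that positivity of the leading coefficient does not come from the non-negativity statement in the inductive proof, which only controls signs. It comes from monicity, which in turn rests on Proposition \ref{prop:degree}(b): the leading coefficient of $\mathcal{H}(n,t)$ is $\pm1$. Dividing by $\sigma(n)t^{\beta(n)}(2-t)^{n-2}$, whose leading coefficient is $\pm1$, therefore leaves $\hat{\mathcal{H}}(n,t)$ with leading coefficient $\pm1$. That coefficient must be $+1$, because all coefficients of $\hat{\mathcal{H}}(n,t)$ are non-negative and it is not the zero polynomial, as $\mathcal{H}(n,t)\neq 0$ by Proposition \ref{prop:degree}.

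The small cases $n=2,3,4$ are covered by the explicit formulas in the base case of the proof of Theorem \ref{thm:factorization}. As an alternative check, they can be computed directly: $\mathcal{H}(2,1)=-1$, $\mathcal{H}(3,1)=2$, $\mathcal{H}(4,1)=5$, consistent with the recursion for $\sigma$.
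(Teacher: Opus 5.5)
Your argument is the paper's own: the corollary is read off from Theorem~\ref{thm:factorization} at $t=1$. There $\mathcal{H}(n,1)=\sigma(n)\hat{\mathcal{H}}(n,1)$, and $\hat{\mathcal{H}}(n,1)\geq 1$ because $\hat{\mathcal{H}}(n,t)$ is monic with non-negative coefficients. This part is correct.

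Your closing consistency check is wrong, however. The recursion for $\sigma$, as literally stated in Theorem~\ref{thm:factorization}, gives $\sigma(4)=\sigma(2)=-1$ (its second case, with $k=1$). But $\mathcal{H}(4,t)=t^2(2-t)^2(t^2+2t+2)$ and $\mathcal{H}(4,1)=5$, so the sign in the factorization is $\sigma(4)=+1$. The base case $n=4$ in the proof of that theorem also has positive sign.

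The sign recursion is only derived in that proof for $n\geq 5$, via Theorem~\ref{thm:det_H_recurrence}, and it needs $\sigma(4)=1$ as an extra initial value. Otherwise the error propagates: for example, it would give $\sigma(5)=-\sigma(4)=+1$, while $\mathcal{H}(5,1)=-8$.

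So the corollary, and your proof of it, are correct when $\sigma(n)$ means the sign appearing in the factorization. They are not correct if $\sigma(n)$ means the output of the recursion started from $\sigma(2)=-1$ alone. You should not claim to have checked consistency with that recursion.
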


The signs satisfy another, arguably simpler set of recurrence relations, where for convenience we define $\sigma(1)=1$.
\begin{prop}
For all $n \in \N_+$ we have
\begin{equation} \label{eq:sigma}
    \begin{aligned}
    \sigma(4n) &= (-1)^{n} \sigma(2n), \\
    \sigma(4n+1) &= (-1)^n \sigma(2n+1),\\
    \sigma(4n+2) &= (-1)^{n+1} \sigma(2n+1), \\    
    \sigma(4n+3) &= -\sigma(2n+1). 
\end{aligned}
\end{equation}
\end{prop}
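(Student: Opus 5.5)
We prove all four identities in \eqref{eq:sigma} simultaneously, by strong induction on the left-hand index $m\in\{4n,4n+1,4n+2,4n+3\}$. We use only the recursion for $\sigma$ from Theorem \ref{thm:factorization}; the factors $t^{\beta}$ and $(2-t)^{n-2}$ play no role. That recursion is proved only for indices $\geq 5$, so the small values of $\sigma$ serve as base cases. We read them off from the signs of the determinants, using the corollary $\sgn\mathcal{H}(n,1)=\sigma(n)$ and the listed values $\mathcal{H}(2,1)=-1$, $\mathcal{H}(3,1)=2$, $\mathcal{H}(4,1)=5,\dots$. We then check \eqref{eq:sigma} directly for all $m$ up to some small bound, say $m\leq 16$, so that $k\geq 4$ may be assumed below.

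For the induction step, fix $m$ with $2^k<m\leq 2^{k+1}$. Let $m^\ast$ be the corresponding right-hand index ($2n$ or $2n+1$), which lies in $(2^{k-1},2^k]$. We apply the recursion of Theorem \ref{thm:factorization} once to $m$ at level $k$ and once to $m^\ast$ at level $k-1$. The aim is to show that the reduced indices $m'$ and $m^{\ast\prime}$ are again related by one of the four rules of \eqref{eq:sigma}, with a smaller index, so the induction hypothesis applies.

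\emph{Case (b)} ($m-2^k$). Here $m'=m-2^k$ and $m^{\ast\prime}=m^\ast-2^{k-1}$. Subtracting $2^k$ preserves the residue of $m$ mod $4$ and replaces $n$ by $n-2^{k-2}$, which has the same parity since $k\geq 3$. Since $\sigma(m)=\sigma(m')$ and $\sigma(m^\ast)=\sigma(m^{\ast\prime})$, no sign appears and the identity transfers verbatim.

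\emph{Case (a)} (reflection). Here $m'=2^{k+1}-m+1$ and $m^{\ast\prime}=2^k-m^\ast+1$. The reflection exchanges residues $0\leftrightarrow 1$ and $2\leftrightarrow 3$ mod $4$. For example, $m=4n$ gives $m'=4N+1$ and $m^{\ast\prime}=2N+1$ with $N=2^{k-1}-n\equiv n \pmod 2$. Then
\[
\sigma(4n)=(-1)^{4n}\sigma(4N+1)=(-1)^N\sigma(2N+1)=(-1)^N(-1)^{2n}\sigma(2n)=(-1)^n\sigma(2n).
\]
The pair $4n+2$, $4n+3$ is handled the same way, now using the factor $(-1)^{m}$ from the recursion for both $m$ and $m^\ast$. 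The parities must combine to turn the rule for residue $3$ (the constant sign $-1$) into the rule for residue $2$ (the sign $(-1)^{n+1}$), and conversely. This is a short bookkeeping check of exponents mod $2$ that I expect to work out exactly as in the residue-$0$ computation.

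The main obstacle is the boundary: $m$ and $m^\ast$ need not fall in the same case. The threshold for $m$ is $3\cdot 2^{k-1}$, while that for $m^\ast\approx m/2$ is $3\cdot 2^{k-2}$, and the offsets $0,1,2,3$ can put $m$ in case (b) while $m^\ast$ is in case (a), or the reverse. The extremes $m=2^{k+1}$ and $m=2^k+1$ also need attention. Since $3\cdot 2^{k-1}$ is divisible by $4$ for $k\geq 3$, only a bounded number of $m$ near the threshold are affected. For these I would apply the recursion a second time, or compute $\sigma$ explicitly along the short chain of reductions, and verify those few cases by hand uniformly in $k$. The cases $m=4n+1$ with $n$ a power of $2$, which reflect onto $m'=2^k$, may need the same treatment.
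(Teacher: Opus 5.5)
There is a genuine gap, and in fact it cannot be closed, because the fourth identity is false as stated.

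\textbf{Where the argument breaks.} Your handling of the first two identities is fine and is essentially the paper's argument. The paper writes out only $\sigma(4n+1)$, pairing it with $\sigma(4N)$ under reflection, and says the other relations are similar. The step you postpone for residues $2$ and $3$ is not a matter of sign bookkeeping: the indices do not match. Put $N=2^{k-1}-n-1$. Reflection at level $k$ sends $4n+2\mapsto 4N+3$ and $4n+3\mapsto 4N+2$. But reflection at level $k-1$ sends the partner $2n+1\mapsto 2^k-2n=2N+2$, not $2N+1$. The reduced pairs $(4N+3,2N+2)$ and $(4N+2,2N+2)$ are not instances of any of the four rules, so your induction hypothesis does not apply to them.

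\textbf{The fourth identity is false.} The values listed in the introduction give $\mathcal{H}(3,1)=2$, $\mathcal{H}(7,1)=8$, $\mathcal{H}(15,1)=1836$, $\mathcal{H}(5,1)=-8$ and $\mathcal{H}(11,1)=-4$. Hence $\sigma(7)=\sigma(3)$, $\sigma(15)=\sigma(7)$ and $\sigma(11)=\sigma(5)$. More generally, Proposition~\ref{prop:special_cases} gives $\mathcal{H}(2^m-1,1)>0$ for all $m\geq 2$. This contradicts $\sigma(4n+3)=-\sigma(2n+1)$ at $n=2^{m-2}-1$ for every $m\geq 3$. The direct check up to $m\leq 16$ that you announce would already have failed at $m=7$. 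The paper's assertions that the base cases $n=1,2$ hold and that the remaining relations are ``similar'' have the same defect.

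\textbf{The missing ingredient.} What you need is a relation between consecutive indices:
\[
\sigma(2M)=(-1)^M\sigma(2M-1), \qquad M\geq 1.
\]
It is stable under both moves of the recursion. Translation by $2^k$ preserves it. Reflection sends $(2M,2M-1)$ to $(2P-1,2P)$ with $P=2^k-M+1\equiv M+1 \pmod 2$. So it follows by the same induction from the base values $\sigma(1)=1$, $\sigma(2)=-1$, $\sigma(3)=\sigma(4)=1$.

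Combined with the first two rules, it gives $\sigma(4n+2)=-\sigma(4n+1)=(-1)^{n+1}\sigma(2n+1)$, so the third identity is true. It also gives $\sigma(4n+3)=\sigma(4n+4)=(-1)^{n+1}\sigma(2n+2)=\sigma(2n+1)$. So the correct fourth identity carries a plus sign.

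\textbf{The boundary issue is not an obstacle.} For $k\geq 3$ the cut points $2^k$, $3\cdot 2^{k-1}$ and $2^{k+1}$ are multiples of $4$. For each of the four residues, $m$ at level $k$ and $m^\ast$ at level $k-1$ therefore always fall into the same case, including the extremes $m=2^k+1$ and $m=2^{k+1}$. The paper uses this alignment without comment.
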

\begin{proof}
We use induction on $n$. For $n=1,2$ our claim holds, and thus let $n \geq 3$. The argument is similar for each relation, so we only prove the second one.
If $2^k < 4n+1 \leq 3 \cdot 2^{k-1}$, then by Theorem \ref{thm:factorization}  and the inductive assumption, we get
$$
    \sigma(4n+1) = -\sigma(2^{k+1}-4n) = (-1)^{n+1} \sigma(2^{k}-2n) = (-1)^n \sigma(2n+1)
$$
If $3 \cdot 2^{k-1} < 4n +1 \leq 2^{k+1}$, then again
$$ \sigma(4n+1) = \sigma(4n- 2^{k}+1) = (-1)^n \sigma(2n-2^{k-1}+1) =  (-1)^n \sigma(2n+1),$$
as desired.
\end{proof}

We now show that the ordinary generating function of the sequence $(\sigma(n))_{n \geq 1}$ of signs of $\mathcal{H}(n,1)$ is transcendental. In particular, this implies that this sequence is not periodic. 

\begin{thm}
Let $F(x)=\sum_{n=1}^{\infty}\sigma(n)x^{n}$. The function $F(x)$ is transcendental over $\Q(x)$. In particular, the sequence $(\sigma(n))_{n \geq 1}$ is not periodic.
\end{thm}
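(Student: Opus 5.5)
The plan is to turn the recurrences \eqref{eq:sigma} into a Mahler-type functional equation for $F$ and then invoke the classical dichotomy for Mahler functions. The target is the well-known fact that a power series satisfying $F(x)=a(x)F(x^2)+b(x)$ with rational $a,b$ is either rational or transcendental over $\Q(x)$. Once this is in place, it remains to exclude rationality. Rationality is exactly what non-periodicity would follow from: a periodic $\pm1$ sequence has a rational generating function, so transcendence implies that $(\sigma(n))$ is not periodic.

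\textbf{Step 1: a vector functional equation.} Since \eqref{eq:sigma} mixes $\sigma(2n)$ and $\sigma(2n+1)$ with signs $(-1)^n$, I would work with the odd and even parts separately. Put
$$E(x)=\sum_{n\ge1}\sigma(2n)x^{n},\qquad O(x)=\sum_{n\ge0}\sigma(2n+1)x^{n}.$$
I would also introduce the twisted series $E^-(x)=E(-x)$ and $O^-(x)=O(-x)$ to absorb the factors $(-1)^n$. The relations then give
\begin{align*}
O(x)&=\sigma(1)+\sum_{n\ge1}\bigl(\sigma(4n+1)x^{2n}+\sigma(4n+3)x^{2n+1}\bigr)+\sigma(3)x,\\
E(x)&=\sum_n \sigma(4n)x^{2n}+\sum_n\sigma(4n+2)x^{2n+1},
\end{align*}
which can be expressed through $O(\pm x^2)$ and $E(-x^2)$, plus finitely many polynomial corrections from small $n$. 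This yields a linear system $V(x)=A(x)V(x^2)+B(x)$ for the vector $V=(E,E^-,O,O^-)$, with $A$ and $B$ polynomial.

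\textbf{Step 2: reduction to a single series.} Using $\sigma(4n+3)=-\sigma(2n+1)$ together with the other relations, I would eliminate variables to obtain a scalar equation for $F(x)=E(x^2)+xO(x^2)$ of Mahler type of order $2$ (or $4$), with polynomial coefficients. The conclusion then follows from Nishioka's theorem, or from the Becker/Bell--Coons rational-or-transcendental dichotomy for $2$-Mahler functions with polynomial coefficients. I expect $\sigma$ to be $2$-regular, and in fact $2$-automatic, since the recurrences are of automatic type. With that, one can alternatively cite the Fatou/Christol-type result that an automatic power series over $\Q$ is either rational or transcendental. This is probably the cleanest route: show $\sigma$ is $2$-automatic from \eqref{eq:sigma}, with $2$-kernel generated by $\sigma(n)$, $\sigma(2n)$, $\sigma(2n+1)$ and their twists by $(-1)^n$, so only finitely many sequences occur.

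\textbf{Step 3 (main obstacle): excluding rationality.} A rational series with bounded integer coefficients is eventually periodic, so I must show $(\sigma(n))$ is not eventually periodic. My plan is to exploit the relation $\sigma(4n+3)=-\sigma(2n+1)$ along the chain $m_0=1$, $m_{j+1}=2m_j+1=2^{j+1}-1$, which gives $\sigma(2^{j}-1)=(-1)^{j}\sigma(1)$ up to the base case. Suppose the sequence were eventually periodic with period $p$. Write $p=2^a q$ with $q$ odd and choose $j$ large. Then $2^{j}-1$ and $2^{j+\mathrm{ord}_q(2)}-1$ lie in the same residue class modulo $q$ but differ modulo $2^a$ only through high powers of $2$; this needs adjusting, so I would instead compare $2^{j}-1$ with $2^{j+r}-1$ where $r$ is a multiple of $\mathrm{ord}_q(2)$. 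These two indices are congruent modulo $p$ provided both are $\equiv -1 \pmod{2^a}$, which holds for $j\ge a$. Choosing $r$ odd (possible if $\mathrm{ord}_q(2)$ is odd; otherwise I would combine with the relations $\sigma(4n+1)$ and $\sigma(4n+2)$ to produce a similar chain whose sign alternates with the right parity) forces $\sigma(2^j-1)=-\sigma(2^{j+r}-1)$, contradicting periodicity. Handling the parity of $\mathrm{ord}_q(2)$ is the delicate point. If it fails, I would use a second family, for instance $n=2^j$, where $\sigma(2^{j+1})=(-1)^{2^{j-1}}\sigma(2^j)$, or mixed chains, to get sign patterns incompatible with any period.
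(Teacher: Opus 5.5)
Your dichotomy step is fine. Fatou's theorem already gives ``rational or transcendental'' directly, since the coefficients are integers and the series converges in the unit disc, so the Mahler system of Steps 1--2 is not even needed. The real problem is Step 3, which carries the whole content of the theorem, and it fails for two reasons.

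\emph{The input is false.} The relation $\sigma(4n+3)=-\sigma(2n+1)$ in \eqref{eq:sigma} has a sign error. For $n=1$, case (b) of the recursion in Theorem \ref{thm:factorization} gives $\sigma(7)=\sigma(3)$, consistent with $\mathcal{H}(3,1)=2$ and $\mathcal{H}(7,1)=8$. More generally, Proposition \ref{prop:special_cases} shows $\mathcal{H}(2^m-1,1)>0$, so $\sigma(2^j-1)=1$ for all $j\ge 1$. The correct relation is $\sigma(4n+3)=\sigma(2n+1)$. Your chain $2^j-1$ therefore carries no sign information at all. Neither does your fallback family $2^j$, on which $\sigma$ is constant for $j\ge 2$.

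\emph{The chain argument is incomplete even granting the signs.} Comparing indices along a single chain cannot rule out periods $p=2^aq$ with $\mathrm{ord}_q(2)$ even. For $p=3$, the indices $2^j-1$ lie in the class $0$ or $1$ modulo $3$ according to the parity of $j$, which is perfectly compatible with alternating signs. The appeal to ``a second family or mixed chains'' is not an argument, so non-periodicity remains unproved.

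The missing idea is to use the recursion at all indices to halve any period. Put $c_n=\sigma(2n+1)$, so that $c_{2n}=(-1)^nc_n$ and $c_{2n+1}=c_n$.
\begin{itemize}
\item If $\sigma$ were eventually periodic with period $p$, then so would be $(c_n)$.
\item Comparing $c_{2n+2p}=c_{2n}$ for large $n$ forces $(-1)^p=1$, so $p$ is even.
\item Then $c_{2n+1+p}=c_{2n+1}$ gives $c_{n+p/2}=c_n$ for large $n$.
\item So there is no minimal eventual period, which is a contradiction.
\end{itemize}
This descent works equally well with the sign $c_{2n+1}=-c_n$, so it does not depend on the error above. Together with Fatou's theorem it proves the statement.

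For comparison, the paper derives the Mahler equation $C(x)+(x-1)C(x^2)-2x^2C(x^4)=0$ for $C(x)=\sum_{n\ge 0}\sigma(2n+1)x^n$ and invokes Nishioka's theorem. It then excludes rationality by comparing degrees and leading coefficients. With the corrected sign, however, the equation reads $C(x)-(1+x)C(x^2)+2x^2C(x^4)=0$. There the leading coefficients of the two top-degree terms coincide, so that degree argument gives no contradiction. A non-periodicity argument like the one above is the robust way to finish.
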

\begin{proof}
It is clear that to show that the function $F(x)$ is transcendental it is 
enough to show that its odd part 
$$
C(x)=\sum_{n=0}^{\infty}\sigma(2n+1)x^{n}
$$
is transcendental. We split the series according to the parity of $n$:
$$
C(x)=\sum_{n=0}^{\infty} \sigma(4n+1)\, x^{2n}+\sum_{n=0}^{\infty} \sigma(4n+3)\, x^{2n+1},
$$
and using the relations $\sigma(4n+1)=(-1)^n \sigma(2n+1)$ and $\sigma(4n+3)=-\sigma(2n+1)$, we obtain the functional relation
\begin{equation}\label{functrel1}
C(x)=\sum_{n\geq 0} (-1)^n \sigma(2n+1) x^{2n}- x\sum_{n\geq 0} \sigma(2n+1) x^{2n}= C(-x^2)-xC(x^2).
\end{equation}
Next, in order to eliminate the part with $C(-x^2)$ from \eqref{functrel1}, we replace $x$ by $-x$ and get the relation 
$C(-x)=C(-x^2)+xC(x^2)$, or equivalently $C(-x^2)=C(-x)-xC(x^2)$. Putting this expression into (\ref{functrel1}) leads us to the identity
$$
C(-x)=2xC(x^2)+C(x),
$$
which, after replacing $x$ by $x^2$, gives the relation
$$
C(-x^2)=2x^2C(x^4)+C(x^2).
$$
Finally, we put the obtained expression for $C(-x^2)$ into (\ref{functrel1}) and obtain the functional relation for $C(x)$ 
in the following form: 
\begin{equation}\label{functrel2}
C(x)+(x-1)C(x^2)-2x^2C(x^4)=0.
\end{equation}

Because $C(x)$ satisfies a Mahler-type functional equation, we know, from the theorem of Nishioka \cite[Theorem~5.1.7]{N}, that $C$ is rational or 
transcendental over $\Q(x)$. Thus, it is enough to check that $C$ cannot represent a rational function.

For the contrary, suppose $C(x)$ is a rational function and not identically zero. Then we can write
$$
C(x) = \frac{P(x)}{Q(x)},
$$
where $P,Q\in\C[x]$, $Q\not\equiv 0$ and $\gcd(P,Q)=1$. Let
$$
p = \deg P,\qquad q = \deg Q,
$$
and let $a$ and $b$ be the leading coefficients of $P$ and $Q$ respectively, i.e.,
$$
P(x) = a x^p + \text{(lower degree terms)},\quad Q(x) = b x^q + \text{(lower degree terms)},
$$
with $a,b\neq 0$.

We put the rational expression for $C(x)$ into (\ref{functrel2}) and clear denominators to obtain the polynomial equality
$$
P(x)Q(x^2)Q(x^4)=(1-x)Q(x)P(x^2)Q(x^4)+2x^2Q(x)Q(x^2)P(x^4).
$$
The degrees of the particular terms (from left to right) are 
$$
p+6q,\qquad 1+2p+5q,\qquad 2+4p+3q.
$$
By comparing the degrees we see that the only possibility for this equality to hold is that $1+2p+5q > 2+4p+3q$. Indeed, if $1+2p+5q < 2+4p+3q$, then $p+6q = 2+4p+3q$ and we get 
$3(q-p)=2$, a contradiction. If $1+2p+5q = 2+4p+3q$, then $2(q-p)=1$, also a contradiction. We thus have $1+2p+5q > 2+4p+3q$, and then we get $p+6q=1+2p+5q$, which implies $q=p+1$.

However, the leading coefficient in $P(x)Q(x^2)Q(x^4)$ is $ab^2$, and the leading coefficient in $(1-x)Q(x)P(x^2)Q(x^4)$ is $-ab^2$, and since the 
common degree of these polynomials is bigger than the degree of $2x^2Q(x)Q(x^2)P(x^4)$, we get that $2ab^2=0$, which is a contradiction.
\end{proof}

\subsection{Divisibility properties}

Another consequence of (the proof of) Theorem \ref{thm:det_H_recurrence} is a simpler recurrence relation describing the parity of $\mathcal{H}_{\mathbf{w}}(n)$ for integer weights. As a special case, we can deduce the periodicity of $\mathcal{H}(n,1) \bmod{2}$. These are Hankel determinants, reduced modulo $2$, for the Thue--Morse sequence $(T_n)_{n \geq 0}$, defined by $T_0=0, T_{2n}=T_n, T_{2n+1}=1-T_n$. A similar result has already been proved for another variant of this sequence, with $T_0=1$ \cite[Proposition 2.2]{APWW}.

 \begin{cor}\label{cor:automaticitymod2}
 Assume that $w_j \in \Z$ for all $j \in \N$.
Let  $n \in \N_+$ and $k \in \N$ be such that $2^k < n \leq 2^{k+1}$. Then
$$\begin{bmatrix}
    \mathcal{H}_{\mathbf{w}}(n) \\
    \widetilde{\mathcal{G}}_{\mathbf{w}}(n)
\end{bmatrix} \equiv 
w_{k+1} \begin{bmatrix}
    1 & w_k \\ 0 & 1
\end{bmatrix}
\begin{bmatrix}
    \mathcal{H}_{\mathbf{w}}(2^{k+1}-n+1) \\
    \widetilde{\mathcal{G}}_{\mathbf{w}}(2^{k+1}-n+1)
\end{bmatrix} \pmod{2}.
$$
In particular, Hankel determinants for the Thue--Morse sequence satisfy
$$ \mathcal{H}(n,1) \equiv n+1 \pmod{2}. $$
 \end{cor}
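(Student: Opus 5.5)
The plan is to avoid the two-branch structure of Theorem \ref{thm:det_H_recurrence}. I will go back to the integral row and column reduction in its proof and reduce modulo $2$ before choosing a branch. All elementary operations used there (subtracting block rows and columns, and the $2^k$th row and column) have integer coefficients and determinant $1$. So the matrix displayed just after \eqref{eq:H_after_reduction} has the same determinant as $H_{\mathbf{w}}(2^{k+1})$, and likewise for its upper-left $n\times n$ submatrix, which is obtained from $H_{\mathbf{w}}(n)$ by the corresponding operations. The only division by $2$ in the earlier proof occurs in case (b), where half the last block row is added to the second. I will never perform that step. Instead I will work modulo $2$, where $u_{k+1}=w_{k+1}-2w_k\equiv w_{k+1}$ and the block $-2u_{k+1}M$ vanishes.

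First I will check that, after dropping $-2u_{k+1}M$, the $n\times n$ upper-left submatrix has exactly the block shape used in case (a), for every $2^k<n\le 2^{k+1}$. Set $r=n-2^k-1$. The last $r$ rows meet the first block column $\begin{bmatrix}H_{\mathbf w}(2^k)\end{bmatrix}$ region only through the top $r$ rows of $u_{k+1}M_{2^k-1}$. Since $M$ has ones only on or below the antidiagonal, these rows are supported in the last $r$ columns, where they form $u_{k+1}M_r$. Symmetrically, the first $r$ columns of the upper-right block $u_{k+1}M_{2^k-1}$ are supported in its bottom $r$ rows. Hence the bottom-right $r\times r$ corner is $\mathbf 0$ modulo $2$, and the matrix is congruent to the case (a) display. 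Expanding along the last block column and block row, and then doing the column-differencing step exactly as in case (a), gives
$$\mathcal H_{\mathbf w}(n)\equiv u_{k+1}^{2n-2^{k+1}-2}\bigl[u_{k+1}\mathcal H_{\mathbf w}(n')+w_k^2\widetilde{\mathcal G}_{\mathbf w}(n')\bigr]\pmod 2,\qquad n'=2^{k+1}-n+1.$$
Signs are irrelevant modulo $2$. The same argument, keeping the column of $1$s intact, gives the analogous congruence for $\widetilde{\mathcal G}_{\mathbf w}(n)$. Finally I reduce using $x^m\equiv x\pmod 2$ for integers $x$ and $m\ge1$. The exponent $2n-2^{k+1}-1$ is at least $1$ because $n\ge 2^k+1$, so $u_{k+1}^{2n-2^{k+1}-1}\equiv w_{k+1}$ and $u_{k+1}^{2n-2^{k+1}-2}w_k^2\equiv w_{k+1}w_k$. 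When the exponent $2n-2^{k+1}-2$ is $0$, the second term is $w_k^2\equiv w_k$, which equals $w_{k+1}w_k$ only after the reduction holds for both entries together. I expect this edge case to need a separate check.

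The main obstacle is the support bookkeeping for the truncated $M$ blocks in what was case (b), together with the edge cases. These are $n=2^{k+1}$, where $n'=1$, $\mathcal H_{\mathbf w}(1)=0$ and $\widetilde{\mathcal G}_{\mathbf w}(1)=1$, and the small values $n\le4$, which lie outside the range of Theorem \ref{thm:det_H_recurrence}. I plan to verify the small cases directly from the explicit $2\times2$, $3\times3$ and $4\times4$ matrices.

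For the Thue--Morse consequence I take $w_j=1$. The recursion for $\widetilde{\mathcal G}$ gives $\widetilde{\mathcal G}(n,1)\equiv\widetilde{\mathcal G}(n',1)\equiv\cdots\equiv\widetilde{\mathcal G}(1,1)=1$, so $\mathcal H(n,1)\equiv\mathcal H(n',1)+1$. Since $n'\equiv n+1\pmod 2$ and $n'<n$, induction starting from $\mathcal H(1,1)=0$ yields $\mathcal H(n,1)\equiv n'+1+1\equiv n+1\pmod 2$.
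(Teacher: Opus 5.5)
\textbf{Gap: the edge case $n=2^k+1$ cannot be closed, because the stated congruence is false there.}

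Your strategy is the paper's own. You reduce the matrix obtained from \eqref{eq:H_after_reduction} modulo $2$, so the block $-2u_{k+1}M$ disappears and every $n$ with $2^k<n\le 2^{k+1}$ takes the block shape of case (a). The following parts of your argument are sound:
\begin{itemize}
\item the support bookkeeping for the truncated $M$ blocks;
\item the resulting congruence
\[
\mathcal H_{\mathbf w}(n)\equiv u_{k+1}^{2n-2^{k+1}-2}\bigl[u_{k+1}\mathcal H_{\mathbf w}(n')+w_k^2\,\widetilde{\mathcal G}_{\mathbf w}(n')\bigr]\pmod 2,\qquad n'=2^{k+1}-n+1;
\]
\item the $\widetilde{\mathcal G}$ row;
\item the Thue--Morse induction.
\end{itemize}
The problem is the edge case you defer to a ``separate check''. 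Your remark that the reduction works ``for both entries together'' gives no mechanism. The two rows are independent congruences. For $n=2^k+1$ the exponent $2n-2^{k+1}-2$ is $0$, so the top row reads
\[
\mathcal H_{\mathbf w}(n)\equiv w_{k+1}\mathcal H_{\mathbf w}(2^k)+w_k\,\widetilde{\mathcal G}_{\mathbf w}(2^k)\pmod 2,
\]
with $w_k$ in place of $w_{k+1}w_k$.

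Here are explicit counterexamples.
\begin{itemize}
\item Take $\mathbf w=(1,0,0,\ldots)$ and $n=2$. Then $H_{\mathbf w}(2)=\begin{bmatrix}0&1\\1&0\end{bmatrix}$, so $\mathcal H_{\mathbf w}(2)=-1$. The right-hand side is $w_1\bigl(\mathcal H_{\mathbf w}(1)+w_0\widetilde{\mathcal G}_{\mathbf w}(1)\bigr)=0$.
\item Take $\mathbf w=(0,1,0,\ldots)$ and $n=3$. Then $\mathcal H_{\mathbf w}(3)=-1$, while the right-hand side has the factor $w_2=0$.
\item This is not a small-$n$ artifact. For $\mathbf w=(1,1,1,0,\ldots)$, Theorem \ref{thm:det_H_recurrence}(a) gives $\mathcal H_{\mathbf w}(5)=u_3\mathcal H_{\mathbf w}(4)+w_2^2\widetilde{\mathcal G}_{\mathbf w}(4)=(-2)\cdot 5+(-3)=-13$. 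This is odd, while the claimed right-hand side has the factor $w_3=0$.
\end{itemize}
In general, $\widetilde{\mathcal G}_{\mathbf w}(2^k)\equiv w_k$ for $k\ge 1$ and $\widetilde{\mathcal G}_{\mathbf w}(1)=1$. Hence the top congruence fails at $n=2^k+1$ exactly when $w_k$ is odd and $w_{k+1}$ is even.

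The paper's one-line proof has the same blind spot. It reduces $A_{\mathbf w}(n)C_{\mathbf w}(n)=u_{k+1}^{2n-2^{k+1}-2}w_k^2$ modulo $2$ as though the exponent were always positive.

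What your argument (and the paper's) actually proves is a corrected statement: the top-right entry is $w_k$ when $n=2^k+1$ and $w_{k+1}w_k$ otherwise. The Thue--Morse consequence survives, because for $w_j=1$ these two entries agree. Your planned direct check of $n\le 4$ would have exposed the problem at $n=2$ and $n=3$.
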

 \begin{proof}
After reducing modulo $2$, the last summand in \eqref{eq:H_after_reduction} becomes
$$ u_{k+1} \begin{bmatrix}
    0 & 1 \\ 1 & 0
\end{bmatrix} \otimes K_{2^k}.  $$
The remaining calculations proceed exactly as in Theorem \ref{thm:det_H_recurrence}(a). By reducing the formula there modulo $2$, we obtain the first part of our claim.

The second part follows quickly by observing that for weights equal to $1$, the determinants $\widetilde{\mathcal{G}}_{\mathbf{w}}(n)$ are odd for all $n \in \N_+$.
 \end{proof}

It is natural to ask whether the following more general property holds.
\begin{ques}
    Let $k \in \N_+$. Is the sequence $(\mathcal{H}(n+1,1) \bmod{2^k})_{n \geq 0}$  $2$-automatic?
\end{ques}  
Experimental computations suggest that the answer is affirmative at least for $k=2$ and $k=3$, where the automata generating the considered sequences have $12$ and $83$ states, respectively. Moreover, all possible remainders $0,1,\ldots,2^k-1$ seem to appear with similar frequency.

On the other hand, it turns out that almost all determinants $\mathcal{H}(n,1)$ vanish modulo any fixed odd number (in the sense of density). We note that this fact follows from the results of Section \ref{sec:vanishing}.

\begin{cor}
    For any odd $m \in \N_+$ the set $$\{n \geq 1: m \mid \mathcal{H}(n,1) \}$$
    has density $1$.
\end{cor}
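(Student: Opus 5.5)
The plan is to reduce the divisibility of $\mathcal{H}(n,1)$ by $m$ to the vanishing of $\mathcal{H}(n,t)$ at the points $t=2\zeta$. The density statement then follows from the sparsity results of Section \ref{sec:vanishing}.

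Fix an odd $m\in\N_+$. Since $2$ is invertible modulo $m$, let $D$ be the multiplicative order of $2$ modulo $m$, so that $m \mid 2^D-1$. Consider the monic integer polynomial
$$ t^D - 2^D = \prod_{d\mid D} \Psi_d(t), \qquad \Psi_d(t) := \prod_{\zeta \text{ of order } d} (t-2\zeta) = 2^{\varphi(d)}\Phi_d(t/2) \in \Z[t]. $$
Its roots $2\zeta$, for $\zeta^D=1$, are pairwise distinct. Suppose that $\mathcal{H}(n,t)$ vanishes at every $t=2\zeta$ with $\zeta^D=1$. Then $t^D-2^D$ divides $\mathcal{H}(n,t)$ in $\Q[t]$. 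Since $t^D-2^D$ is monic and $\mathcal{H}(n,t)\in\Z[t]$, the division algorithm shows that the quotient lies in $\Z[t]$. Evaluating at $t=1$ then gives $(1-2^D)\mid \mathcal{H}(n,1)$, and hence $m\mid \mathcal{H}(n,1)$.

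It remains to show that the hypothesis holds for a set of $n$ of density $1$. We treat the roots by order.
\begin{itemize}
\item For $\zeta=1$ (order $d=1$): Theorem \ref{thm:factorization} gives the factor $(2-t)^{n-2}$, so $\mathcal{H}(n,2)=0$ for all $n\ge 3$.
\item For each $d\mid D$ with $d\ge 2$ and each root of unity $\zeta$ of order $d$: I would invoke Theorem \ref{thm:H_zero2}, which bounds the number of exceptional indices $n\le N$ with $\mathcal{H}(n,2\zeta)\neq 0$ by $o(N)$. Hence the exceptional set for that $\zeta$ has density $0$.
\end{itemize}
The union of the exceptional sets over the finitely many relevant $\zeta$ is a finite union of density-zero sets, so it has density $0$. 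Its complement has density $1$ and is contained in $\{n: m\mid \mathcal{H}(n,1)\}$.

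The main point requiring care is that Theorem \ref{thm:H_zero2} really gives density zero for each fixed $d\ge2$, uniformly enough to take the finite union. I expect its quantitative bound, something like $N^{c}$ with $c<1$ or $N/\log N$, to be stated per order $d$, in which case this step is immediate. If that theorem were only stated for $\zeta$ of order exactly $d$, I would note that the exceptional set depends only on the minimal polynomial $\Psi_d$. Since $\mathcal{H}(n,t)$ has rational coefficients, its vanishing at one primitive $d$-th root $2\zeta$ implies vanishing at all Galois conjugates, so a single $\zeta$ per order suffices.
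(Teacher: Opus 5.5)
Your proposal is correct and follows essentially the same route as the paper: choose $D$ with $m \mid 2^D-1$, use Theorem~\ref{thm:H_zero2} together with $\mathcal{H}(n,2)=0$ for $n\geq 3$ to get $(t^D-2^D)\mid\mathcal{H}(n,t)$ for a density-one set of $n$, and then evaluate at $t=1$. You simply spell out details that the paper's one-line proof leaves implicit, namely that monic division keeps the quotient in $\Z[t]$ and that the union of the finitely many density-zero exceptional sets (one per $\zeta$) still has density zero.
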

\begin{proof}
    Let $d$ be such that $2^d \equiv 1 \pmod{m}$. Then by Theorem \ref{thm:H_zero2} the set $\{ n \geq 1: (t^d - 2^d) \mid \mathcal{H}(n,t)  \}$ has density $1$. The result follows after inserting $t=1$.
\end{proof}

\section{Vanishing and non-vanishing of $\mathcal{H}(n,t)$}\label{sec:vanishing}

In this section, we focus on the determinants $\mathcal{H}(n,t)$ viewed as polynomials in $t$. In particular, we study their vanishing and non-vanishing, which is important for the applications of Hankel determinants, as discussed in Section \ref{sec:Intro}. 

From the results proved so far, we can gather some facts concerning the roots of $\mathcal{H}(n,t)$ for $n \in \N_{\geq 2}$:
\begin{itemize}
\item $\mathcal{H}(n,0)=0 = \mathcal{H}(n,2) $ for $n \geq 3$;
\item $\mathcal{H}(n_k,\zeta)=0$ for any $(k+1)$st root of unity $\zeta$, except $\zeta=1$ (Theorem \ref{thm:nice_formula});
\item the roots of $\mathcal{H}(n,t)$ are algebraic integers (Proposition \ref{prop:degree});
\item if $\mathcal{H}(n,t)=0$ for some $t \in \mathbb{R} \setminus \{0,2\}$, then $t < 0$ (Theorem \ref{thm:factorization}).
\end{itemize}

One may ask the following natural questions:
\begin{enumerate}
    \item What algebraic integers can be roots of $\mathcal{H}(n,t)$ for some $n$?
    \item If $\mathcal{H}(n,t_0)=0$ for some $n \geq 2$ and $t_0 \in \C$, do there exist infinitely many such $n$?
    \item If the above is true, what is the order of the counting function of the set $\{n \geq 2: \mathcal{H}(n,t_0)=0\}$?
\end{enumerate}

\subsection{Behavior at doubled roots of unity}

We begin by exhibiting another infinite family of roots of the polynomials $\mathcal{H}(n,t)$, that is, doubled roots of unity. Instead of relying on the recursive formulas in Theorem \ref{thm:det_H_recurrence}, we take a more direct approach and explicitly construct nonzero vectors lying in $\ker H(n,2\zeta)$ for certain $n$, where $\zeta^d=1$. 

To begin, we give a simple but useful formula for a sum involving $S(n,2\zeta)$.
\begin{lem} \label{lem:alternating_sum}
   Let $d \in \N_+$ and $\zeta^d=1$. Then for each $i=0,1,\ldots, 2^{d}$ we have
   $$  \sum_{j=0}^{2^{d-1}-1} (S(i+2j+1,2\zeta)- S(i+2j,2\zeta)) = 2^{d-1}. $$
\end{lem}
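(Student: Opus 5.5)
The plan is to reduce each summand to the first difference $d_{\mathbf{w}}$ from Section~\ref{sec:G}, with weights $w_j=(2\zeta)^j$. By \eqref{eq:d_w},
$$ S(m+1,2\zeta)-S(m,2\zeta)=d_{\mathbf{w}}(m)=\sum_{r=0}^{\nu_2(m+1)} u_r, $$
where $u_0=1$ and $u_r=(2\zeta)^r-2(2\zeta)^{r-1}=2^{r-1}\zeta^{r-1}(2\zeta-2)$ for $r\ge1$. This telescopes neatly. Indeed,
$$ \sum_{r=0}^{v}u_r=(2\zeta)^v-\sum_{r=0}^{v-1}(2\zeta)^r=(2\zeta)^v-(2^v-1)\ \text{ at }\zeta=1, $$
but we keep it in the general form $d(m)=(2\zeta)^{v}-\sum_{r<v}(2\zeta)^r$ with $v=\nu_2(m+1)$.

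So the sum in the lemma is $\Sigma_i=\sum_{j=0}^{2^{d-1}-1} d_{\mathbf w}(i+2j)$. The arguments $m=i+2j$ all have the same parity as $i$. If $i$ is even, then every $m+1$ is odd, so $v=0$ and each term equals $u_0=1$. The sum is $2^{d-1}$ immediately.

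The real case is $i$ odd. Here the numbers $m+1=i+1+2j$ run over $2^{d-1}$ consecutive even numbers, that is, $2\cdot(c+j)$ with $c=(i+1)/2$ and $j=0,\dots,2^{d-1}-1$. So $\nu_2(m+1)=1+\nu_2(c+j)$, where $c+j$ runs over a full block of $2^{d-1}$ consecutive integers. Writing $d(m)=1+\sum_{r=1}^{v}u_r$, we get
$$ \Sigma_i=2^{d-1}+\sum_{r\ge1}u_r\cdot\#\{j:\nu_2(c+j)\ge r-1\}. $$
Among $2^{d-1}$ consecutive integers, the count of multiples of $2^{r-1}$ is exactly $2^{d-r}$ for $r\le d$. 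For $r>d$ the count is $0$ or $1$, and it can be $1$ for at most one $j$. So
$$ \Sigma_i-2^{d-1}=\sum_{r=1}^{d}2^{d-r}u_r+E, $$
where $E$ is a tail $\sum_{r=d+1}^{V}u_r$ coming from the single $j$ with $\nu_2(c+j)=V-1\ge d$, if such a $j$ exists.

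The main computation uses $u_r=2^{r-1}\zeta^{r-1}(2\zeta-2)$:
$$ \sum_{r=1}^{d}2^{d-r}u_r=2^{d-1}(2\zeta-2)\sum_{r=0}^{d-1}\zeta^{r}, $$
and this vanishes when $\zeta\ne1$ because $\zeta^d=1$. When $\zeta=1$ every $u_r$ with $r\ge1$ is $0$ anyway. The obstacle is the tail $E$. This is where the restriction $0\le i\le 2^d$ enters. We have $c+j\le (2^d+1)/2+2^{d-1}-1<2^d$ and $c+j\ge1$, so $\nu_2(c+j)\le d-1$ and $E$ is an empty sum. (At the boundary, $i=2^d$ is even, so it is already covered.) Hence $\Sigma_i=2^{d-1}$.

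I expect the only delicate points to be the exact counting of multiples of $2^{r-1}$ in a window of length $2^{d-1}$ (for $r\le d$ this is exact, since $2^{r-1}$ divides $2^{d-1}$) and checking that the range of $i$ excludes any term with valuation $\ge d$.
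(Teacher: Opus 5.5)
Your proof is correct, but it takes a different route from the paper's.

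\textbf{The paper's route.} The paper never uses \eqref{eq:d_w}. For odd $i=2k+1$ it telescopes: the sum of the $2^{d-1}$ odd-to-even differences equals $S(2^d+2k,2\zeta)-S(2k+1,2\zeta)$ minus the $2^{d-1}-1$ intervening even-to-odd differences, each of which is $1$. Since $2k<2^d$, adding $2^d$ only sets digit $d$, so the first difference is $(2\zeta)^d-1=2^d-1$, and the result follows.

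\textbf{Your route.} You expand every difference through the $2$-adic formula $d_{\mathbf w}(m)=\sum_{r\le \nu_2(m+1)}u_r$. You then count multiples of $2^{r-1}$ in a window of $2^{d-1}$ consecutive integers. The main term vanishes by $(2\zeta-2)\sum_{r<d}\zeta^r=2(\zeta^d-1)=0$, and you show separately that the tail $E$ is empty.

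\textbf{Comparison.} The paper's argument is shorter. It needs no counting, and it uses $\zeta^d=1$ only through $(2\zeta)^d=2^d$. Yours is more computational, but it ties the lemma to the $u_r$-machinery of Section~\ref{sec:G}. It also isolates exactly where the restriction $i\le 2^d$ matters: no $c+j$ in the window is divisible by $2^d$, so $E=0$. For $i$ outside that range it would give an explicit correction term.

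Both arguments in fact show that for general weights $w_j=t^j$ and odd $i<2^d$ the sum equals $t^d-2^{d-1}$. The hypothesis $t=2\zeta$ with $\zeta^d=1$ is only used at the very end.

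Your aside ``$(2\zeta)^v-(2^v-1)$ at $\zeta=1$'' mixes the specialized and general forms, but it plays no role in the argument.
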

\begin{proof}
If $i$ is even, then each summand equals $1$. If $i$ is odd, say $i=2k+1$, we rewrite the sum as
    \begin{align*} &S(2^d+2k ,2\zeta)-S(2k+1,2\zeta) - \sum_{j=1}^{2^{d-1}-1} (S(2k+2j+1,2\zeta)-S(2k+2j,2\zeta))= \\
    &(2\zeta)^d-1-(2^{d-1}-1)=2^{d-1},
    \end{align*}
    and the lemma is proved.
\end{proof}

We will also need the formula in Proposition \ref{prop:H_recursion}, which in the polynomial case $w_j=t^j$ takes the form
\begin{equation} \label{eq:Ht_recursion}
    H(2^k n,t) = J_{n} \otimes H(2^k,t) + t^k H(n,t) \otimes J_{2^k}  + t^k F(n,t) \otimes K_{2^k},
\end{equation}
where 
$$F(n,t) := G(n,t)-J_{n} = [S(i+j+1,t)-S(i+j,t)-1]_{0\leq i,j < n}.$$
This is precisely the matrix $F_{\mathbf{w}}(n)$, as in Section \ref{sec:H} for the weights $w_j = t^j$.

We now show how to construct vectors in $\ker H(2^d n,2\zeta)$ for a $d$th root of unity~$\zeta$. When writing the product $\otimes$ of vectors or subspaces, we identify the tensor and Kronecker product.

\begin{lem} \label{lem:kerFH}
Let $d,n \in \N_+$ and $\zeta^d=1$. Then
$$\ker F(n,2\zeta) \otimes \ker H(2^d,2\zeta) \subset \ker H(2^d n,2\zeta).$$
\end{lem}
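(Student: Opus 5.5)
The plan is to apply the block decomposition \eqref{eq:Ht_recursion} with $k=d$ and $t=2\zeta$, and to check that each of the three resulting terms annihilates a pure tensor $x\otimes y$. Here $x\in\ker F(n,2\zeta)$ and $y\in\ker H(2^d,2\zeta)$. Since $\zeta^d=1$, we have $t^d=(2\zeta)^d=2^d$, and therefore
$$H(2^d n,2\zeta)=J_n\otimes H(2^d,2\zeta)+2^d\,H(n,2\zeta)\otimes J_{2^d}+2^d\,F(n,2\zeta)\otimes K_{2^d}.$$
The mixed-product property $(A\otimes B)(x\otimes y)=(Ax)\otimes(By)$ gives
$$H(2^d n,2\zeta)(x\otimes y)=(J_nx)\otimes(H(2^d,2\zeta)y)+2^d\,(H(n,2\zeta)x)\otimes(J_{2^d}y)+2^d\,(F(n,2\zeta)x)\otimes(K_{2^d}y).$$
The first term vanishes because $H(2^d,2\zeta)y=0$, and the third vanishes because $F(n,2\zeta)x=0$. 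By linearity, the lemma then follows for the whole tensor product of the kernels, which is spanned by such pure tensors.

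The remaining and only nontrivial step is the middle term. I would show that $J_{2^d}y=0$, i.e.\ $\mathbf{1}^Ty=0$ for every $y\in\ker H(2^d,2\zeta)$. The approach is to exhibit $\mathbf{1}^T_{2^d}$ as a linear combination of the rows of $H(2^d,2\zeta)$. This is exactly what Lemma \ref{lem:alternating_sum} provides. Row $r$ of $H(2^d,2\zeta)$ has entry $S(r+c,2\zeta)$ in column $c$. Consider the combination
$$\sum_{j=0}^{2^{d-1}-1}\bigl(\text{row}_{2j+1}-\text{row}_{2j}\bigr),$$
which uses only rows $0,\dots,2^d-1$, so it is legitimate. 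Its $c$th entry is $\sum_j\bigl(S(c+2j+1,2\zeta)-S(c+2j,2\zeta)\bigr)$. By Lemma \ref{lem:alternating_sum} applied with $i=c\in\{0,\dots,2^d-1\}$, which lies within the allowed range $0\le i\le 2^d$, this entry equals $2^{d-1}$.

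Hence this row combination equals $2^{d-1}\mathbf{1}^T$. Applying it to $y$ gives $2^{d-1}\mathbf{1}^Ty=0$, so $\mathbf{1}^Ty=0$. Thus $J_{2^d}y=\mathbf{1}(\mathbf{1}^Ty)=0$, the middle term vanishes, and the inclusion is proved. I expect no further obstacle beyond verifying the index ranges in this application of the lemma.
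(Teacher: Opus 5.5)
Your proposal is correct and is essentially the paper's proof. You expand via \eqref{eq:Ht_recursion} with $k=d$, kill the first and third terms by the kernel hypotheses, and kill the middle term by showing $\mathbf{1}_{2^d}^T y=0$ via Lemma~\ref{lem:alternating_sum} applied to $\mathbf{1}_{2^{d-1}}\otimes[-1,1]^T$. The only difference is cosmetic: you write $2^{d-1}\mathbf{1}^T$ directly as a row combination of $H(2^d,2\zeta)$, whereas the paper shows $\mathbf{1}_{2^d}$ lies in the column space and appeals to symmetry, which is the same computation transposed.
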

\begin{proof}

If $U \in \ker F(n,2\zeta)$ and $V \in \ker H(2^d,2\zeta)$, then \eqref{eq:Ht_recursion} implies
$$H(2^d n,2\zeta)(U \otimes V) = t^d H(n,2\zeta) U \otimes J_{2^d}V.$$
We now argue that $J_{2^d}V=\mathbf{0}$. Since the kernel and image of a symmetric matrix are orthogonal, it suffices to show that the vector $\mathbf{1}_{2^d}$ is in the range of $H(2^d,2\zeta)$.  But the $i$th component of
$$  H(2^d,2\zeta) \left(\mathbf{1}_{2^{d-1}} \otimes \begin{bmatrix}
     -1 \\ 1
 \end{bmatrix} \right) $$
 is precisely the sum in Lemma \ref{lem:alternating_sum}, and the result follows.
\end{proof}

The next lemma gives a collection of vectors which will serve as ``building blocks'' for Lemma \ref{lem:kerFH}.

\begin{lem} \label{lem:kerFH2}
    Let $d, k \in \N_+$, $\zeta^d=1$, and let $V \in \C^{2^k}$ be any column vector. Then
    \begin{enumerate}[label=(\alph*)]
    \item $\mathbf{1}_{2^{d-1}} \otimes [1,0]^T,\mathbf{1}_{2^{d-1}} \otimes [0,1]^T \in \ker F(2^d,2 \zeta)$;
    \item if $\mathbf{1}_{2^{k}}^T V=0$, we have $\mathbf{1}_{2^{d-1}} \otimes [1,-1]^T \otimes V \in \ker H(2^{d+k},2 \zeta)$;
    \item $\mathbf{1}_{2^{d-1}}\otimes(2[1,0]^T\otimes\mathbf{1}_{2^{d-1}}\otimes[1,-1]^T-[1,-1]^T\otimes[1,\mathbf{0}_{2^{d}-1}]^T)\in \ker H(2^{2d},2 \zeta)$.
\end{enumerate}
\end{lem}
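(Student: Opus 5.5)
All three parts come from the block recursion \eqref{eq:Ht_recursion} at $t=2\zeta$ (so $t^d=2^d$), together with Lemma \ref{lem:alternating_sum} and the kernel-of-$J$ trick of Lemma \ref{lem:kerFH}. I will do (a) first, since (b) and (c) use it.

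\emph{Part (a).} By \eqref{eq:d_w} with $u_0=1$, the entry of $F(2^d,2\zeta)$ at $(i,j)$ is $f(m):=S(m+1,2\zeta)-S(m,2\zeta)-1$ with $m=i+j$, and $f(m)=0$ for even $m$. The $i$th entry of $F(2^d,2\zeta)(\mathbf{1}_{2^{d-1}}\otimes[1,0]^T)$ is
$$\sum_{j=0}^{2^{d-1}-1} f(i+2j).$$
Lemma \ref{lem:alternating_sum}, valid for every $i\le 2^d$, says $\sum_j\bigl(S(i+2j+1,2\zeta)-S(i+2j,2\zeta)\bigr)=2^{d-1}$. Subtracting the $2^{d-1}$ ones gives $0$. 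The vector $\mathbf{1}_{2^{d-1}}\otimes[0,1]^T$ is handled identically with $i$ replaced by $i+1\le 2^d$, which is still in range since $0\le i\le 2^d-1$.

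\emph{Part (b).} I apply \eqref{eq:Ht_recursion} with blocks of size $2^k$ and $n=2^d$:
$$H(2^{d+k},t)=J_{2^d}\otimes H(2^k,t)+t^kH(2^d,t)\otimes J_{2^k}+t^kF(2^d,t)\otimes K_{2^k},$$
and act on $U\otimes V$ with $U=\mathbf{1}_{2^{d-1}}\otimes[1,-1]^T$. Each term vanishes:
\begin{itemize}
\item $J_{2^d}U=\mathbf{0}$, so the first term is zero;
\item $J_{2^k}V=\mathbf{0}$ by the hypothesis $\mathbf{1}_{2^k}^TV=0$, so the second term is zero;
\item $F(2^d,2\zeta)U=\mathbf{0}$ because $U$ is the difference of the two vectors in (a), so the third term is zero.
\end{itemize}

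\emph{Part (c).} This is the main computation. I rewrite the vector as $2P\otimes A-Q\otimes e_0$, where
$$P=\mathbf{1}_{2^{d-1}}\otimes[1,0]^T,\quad Q=\mathbf{1}_{2^{d-1}}\otimes[1,-1]^T,\quad A=\mathbf{1}_{2^{d-1}}\otimes[1,-1]^T\in\C^{2^d},\quad e_0=[1,\mathbf{0}_{2^d-1}]^T.$$
I then use \eqref{eq:Ht_recursion} with blocks of size $2^d$ and $n=2^d$.

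For $P\otimes A$:
\begin{itemize}
\item the middle term dies because $J_{2^d}A=\mathbf{0}$;
\item the last term dies because $F(2^d,2\zeta)P=\mathbf{0}$ by (a);
\item the first term is $(J_{2^d}P)\otimes H(2^d,2\zeta)A=2^{d-1}\mathbf{1}\otimes 2^{d-1}\mathbf{1}$, again by Lemma \ref{lem:alternating_sum}.
\end{itemize}
Hence $H(2^{2d},2\zeta)(2P\otimes A)=2^{2d-1}\,\mathbf{1}_{2^{2d}}$.

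For $Q\otimes e_0$:
\begin{itemize}
\item the first term dies since $J_{2^d}Q=\mathbf{0}$;
\item the last term dies since $FQ=\mathbf{0}$ by (a);
\item the middle term is $t^dH(2^d,2\zeta)Q\otimes J_{2^d}e_0=2^d\cdot 2^{d-1}\mathbf{1}\otimes\mathbf{1}$, using $J_{2^d}e_0=\mathbf{1}$.
\end{itemize}
Note that the last term also vanishes for a second reason: $K_{2^d}e_0$, the first column of $K_{2^d}$, is zero. So $H(2^{2d},2\zeta)(Q\otimes e_0)=2^{2d-1}\,\mathbf{1}_{2^{2d}}$, the two contributions cancel, and the vector lies in the kernel. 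The only real obstacle here is the bookkeeping: getting the factorization into tensor factors right and checking that the two surviving terms carry exactly the same constant $2^{2d-1}$.
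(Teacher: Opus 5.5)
Your proof is correct and is essentially the paper's argument: you get (a) from Lemma~\ref{lem:alternating_sum}, and you get (b) and (c) by applying \eqref{eq:Ht_recursion} and discarding terms exactly as the paper does. There is one harmless slip. Since $A=\mathbf{1}_{2^{d-1}}\otimes[1,-1]^T$ pairs $S(i+2j,2\zeta)$ with $+1$ and $S(i+2j+1,2\zeta)$ with $-1$, Lemma~\ref{lem:alternating_sum} gives $H(2^d,2\zeta)A=-2^{d-1}\mathbf{1}_{2^d}$, so both contributions in (c) equal $-2^{2d-1}\mathbf{1}_{2^{2d}}$ (as in the paper) rather than $+2^{2d-1}\mathbf{1}_{2^{2d}}$; the same factor $H(2^d,2\zeta)A$ enters both terms, so the cancellation is unaffected.
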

\begin{proof}
Put $t = 2 \zeta$.
    Starting with (a), the $i$th component of $F(2^d,t) \left(\mathbf{1}_{2^{d-1}} \otimes [1,0]^T\right)$ is
    $$ \sum_{j=0}^{2^{d-1}-1} \left( S(i+2j+1,t) - S(i+2j,t) - 1 \right)=0,   $$
    by Lemma \ref{lem:alternating_sum}. 
    In the case of $\mathbf{1}_{2^{d-1}} \otimes [0,1]^T$, we need to replace $i$ with $i+1$ above, and Lemma \ref{lem:alternating_sum} again implies our claim.

    For the sake of (b) and (c) we put $W = \mathbf{1}_{2^{d-1}} \otimes [1,-1]^T$. To prove (b), we apply the equality \eqref{eq:Ht_recursion} to $n=2^d$. Using (a) and $J_{2^d} W =\mathbf{0}_{2^d}, J_{2^k} V =\mathbf{0}_{2^k}$, we quickly obtain $  H(2^{d+k},t)(W \otimes V) = \mathbf{0}_{2^{d+k}}.$
 
    In (c) we again apply \eqref{eq:Ht_recursion} to $k=d, n=2^d$, obtaining
    $$  H(2^{2d},t) (\mathbf{1}_{2^{d-1}}\otimes[1,0]^T\otimes W) = 2^{d-1} \mathbf{1}_{2^{d}} \otimes H(2^d,t)W = -2^{2d-2}  \mathbf{1}_{2^{2d}}.$$
   At the same time, by Lemma \ref{lem:alternating_sum} we get
    $$  H(2^{2d},t) \left(W \otimes\begin{bmatrix}
        1 \\ \mathbf{0}_{2^{d}-1}
    \end{bmatrix}\right) = t^d H(2^d,t) W \otimes J_{2^d} \begin{bmatrix}
        1 \\ \mathbf{0}_{2^{d}-1}
    \end{bmatrix} = -2^{2d-1} \mathbf{1}_{2^{2d}}. $$
    Both equalities together give (c).
\end{proof}

We are now ready to state a result which says that for any root of unity $\zeta$ the sequence $(\mathcal{H}(n,2\zeta))_{n \in \N}$ contains arbitrarily long runs of zeros at indices concentrated around multiples of powers of $2$.

\begin{thm} \label{thm:H_zero}
Let $\zeta \in \C$ be a root of unity of order $d \geq 2$ and let $l,s \in \N$ be such that $l \geq d+1$ and $s$ is odd. Write $l = qd +r$, where $q \geq 1, r \in\{1,\ldots,d\},$ and put
$$z_{d,l} = 2^r \frac{2^{qd}-1}{2^d-1}-\begin{cases}
        2 &\text{if } r<d, \\
        1 &\text{if } r=d.
    \end{cases}$$
Then for $i=-z_{d,l}, \ldots, z_{d,l}+1$ we have
$$\mathcal{H}(2^l s + i,2\zeta)=0.$$
In particular, we have $\det(H(n,-2))=0$ for all $n \neq n_k-1,n_k, n_{k}+1$, where the sequence $(n_k)_{k \geq 0}$ is defined by \eqref{eq:n_k}.
\end{thm}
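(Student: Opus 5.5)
The plan is to show vanishing by exhibiting explicit nonzero kernel vectors, in the spirit of Lemmas \ref{lem:kerFH} and \ref{lem:kerFH2}, rather than by using the recursion of Theorem \ref{thm:det_H_recurrence}. Put $t=2\zeta$. The basic observation is the following. Suppose $v\in\C^{M}$ is nonzero, has support in $[a,b)$, and satisfies $\sum_j S(i+j,t)v_j=0$ for all $0\le i<c$. Then for every $N$ with $b\le N\le c$, the restriction of $v$ to its first $N$ coordinates lies in $\ker H(N,t)$, so $\mathcal{H}(N,t)=0$. It therefore suffices, for $N$ near $2^l s$, to produce a kernel vector whose support avoids a long initial and final stretch of coordinates. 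We will also need that it annihilates Hankel rows somewhat beyond $2^l s$.

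\emph{Step 1 (periodicity).} Apply \eqref{eq:Ht_recursion} with $k=l$ and with $n=s$ or $n=s+1$. Blockwise, the rows of $H(2^l(s+1),t)$ are built from $H(2^l,t)$, $J_{2^l}$ and $K_{2^l}$. The proof of Lemma \ref{lem:kerFH} shows that any $V\in\ker H(2^l,t)$ satisfies $J_{2^l}V=0$. If moreover $K_{2^l}V$ vanishes on the relevant rows, then placing $V$ in the $m$th block (any $m<s$) gives a vector annihilated by all block-rows. This is ensured by taking $V$ with many leading zeros: $K$ has ones only below the antidiagonal, so leading zeros of $V$ kill the upper rows of $K_{2^l}V$. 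Shifting such a vector by a few positions changes only finitely many rows, and the Hankel structure converts a shift of the support into a shift of the annihilated rows.

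\emph{Step 2 (construction of $V_l\in\ker H(2^l,t)$ with long zero stretches).} Proceed by descending in blocks of $d$. Write $l=qd+r$ and set
$$V_l=\mathbf{1}_{2^{d-1}}\otimes[0,1]^T\otimes V_{l-d}.$$
Lemma \ref{lem:kerFH} applies to this vector, using Lemma \ref{lem:kerFH2}(a) for the factor $\mathbf{1}_{2^{d-1}}\otimes[0,1]^T\in\ker F(2^d,t)$. The base case $V_r$ comes from Lemma \ref{lem:kerFH2}(b), or from (c) when $r=d$; in the base case the support is pushed as far as possible from the ends.

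The factor $[0,1]$ at each level contributes $2^{l-jd}$ leading zeros. The companion vector built with $[1,0]$ gives the symmetric trailing zeros. Summing over the levels gives $\sum_{j=1}^{q}2^{l-jd}=2^r\frac{2^{qd}-1}{2^d-1}$, which is where $z_{d,l}$ comes from. The correction $-2$ or $-1$ accounts for the small base vector. With these vectors, Step 1 applied near the boundary between blocks $s-1$ and $s$ covers every $N=2^l s+i$ with $-z_{d,l}\le i\le z_{d,l}+1$.

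\emph{Step 3 (the case $t=-2$).} Here $\zeta=-1$ and $d=2$. I expect the union of the intervals $[2^l s-z_{2,l},\,2^l s+z_{2,l}+1]$ over $l\ge3$ and odd $s$ to miss exactly the $n$ of the form $n_k-1,n_k,n_k+1$. The reason is that $z_{2,l}\approx 2^{l}/3$ and $n_k=\lceil 2^{k+2}/3\rceil$ lies farthest from the multiples of $2^l$. The small $n$ are handled by direct check.

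The main obstacle is the index bookkeeping in Steps 1 and 2. One must verify that the $K_{2^l}$-terms really vanish on all the required rows, including rows just past $2^l s$ that straddle two blocks, and that the exact endpoints $-z_{d,l}$ and $z_{d,l}+1$ come out right. The distinction between $r<d$ and $r=d$ in the base vector is the first place I would check carefully.
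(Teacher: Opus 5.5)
\textbf{There is a genuine gap in Step~1.} The failure is structural, not a matter of index bookkeeping. Take $V\in\ker H(2^l,t)$ with $\mathbf{1}_{2^l}^TV=0$, and let $e_m$ be the $m$th unit vector in $\C^s$. Then \eqref{eq:Ht_recursion} gives
\[
H(2^l s,t)\,(e_m\otimes V)=t^l\,\bigl(F(s,t)e_m\bigr)\otimes K_{2^l}V .
\]
The $(i,m)$ entry of $F(s,t)$ is $0$ when $i+m$ is even. When $i+m$ is odd it equals $(t-2)(1+t+\cdots+t^{\nu-1})$ with $\nu=\nu_2(i+m+1)\geq 1$, which is nonzero at $t=2\zeta$ because $\zeta\neq 1$ and $|t|=2$. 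On the other side, $(K_{2^l}V)_r=\sum_{j\geq 2^l-r}V_j$ are suffix sums, so $K_{2^l}V\neq\mathbf{0}$ for every nonzero $V$ of zero sum. For odd $s\geq 3$ there is always some $i\in\{m-1,m+1\}\cap[0,s)$, and that entire block-row lies among the first $2^ls$ rows. So neither zero stretches in $V$ nor translating a single copy (same computation) can make it vanish. Your construction therefore only handles $s=1$, where $F(1,t)=0$.

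\textbf{The missing idea.} You need to superpose copies of your kernel vector $V_l$ in all blocks, with coefficients $c\in\ker F(s,2\zeta)$. By Lemma~\ref{lem:kerFH}, $c\otimes V_l\in\ker H(2^ls,2\zeta)$. A nonzero such $c$ exists precisely because $s$ is odd: $F(s,t)_{ij}=0$ whenever $i\equiv j\pmod 2$, so $F(s,t)$ maps the $\frac{s+1}{2}$-dimensional space of vectors supported on even indices into an $\frac{s-1}{2}$-dimensional space. That your proposal never uses the parity of $s$ is the symptom of this omission.

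\textbf{Getting the endpoints.} Once $c\otimes V_l$ is available, both directions come from the \emph{trailing} zeros of $V_l$, i.e.\ the $[1,0]$ version. Truncation handles $i\leq 0$. For $i>0$, let $\tilde c\in\C^{s+1}$ be $c$ padded by a zero. Then
\[
H(2^l(s+1),t)\,(\tilde c\otimes V_l)=t^l\,\bigl(F(s+1,t)\tilde c\bigr)\otimes K_{2^l}V_l .
\]
Its first $s$ block-rows vanish, and its last is a multiple of $K_{2^l}V_l$, which begins with $z_{d,l}+1$ zeros.

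\textbf{Smaller corrections.}
\begin{itemize}
\item Your remark that leading zeros kill the upper rows of $K_{2^l}V$ is reversed. Trailing zeros do that; leading zeros together with zero sum kill the lower rows. So the $[0,1]$ variant in Step~2 is unnecessary.
\item The descent must stop at $V_{d+r}$ with $d+r\in\{d+1,\ldots,2d\}$, where Lemma~\ref{lem:kerFH2}(b),(c) apply, not at $V_r$. The matrix $H(2^r,2\zeta)$ can be invertible; for example $\mathcal{H}(4,-2)=128$.
\item Step~3 needs the explicit check $2^l-z_{2,l}=n_{l-1}+2$ and $2^l+z_{2,l}+1=n_l-2$ for $l\geq 3$, using $s=1$. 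All $n\leq 7$ are among the excluded indices, so no small cases remain.
\end{itemize}
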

\begin{proof}
    We first find for each $l \geq d+1$ a nonzero vector $U_l \in \ker H(2^l,2\zeta)$ with possibly many trailing zeros.

    For $l=d+1, \ldots,2d-1$ by Lemma \ref{lem:kerFH2}(b) we take 
    $$U_l = \mathbf{1}_{2^{d-1}} \otimes \begin{bmatrix}
        1 \\ -1
    \end{bmatrix}
    \otimes \begin{bmatrix}
        1 \\ -1 \\ \mathbf{0}_{2^{l-d}-2}
    \end{bmatrix}.$$
    We can use the same formula for any $l$, however in order to obtain more trailing zeros we put
    $$  U_{2d} = \mathbf{1}_{2^{d-1}}\otimes \left(2\begin{bmatrix}
      1 \\ 0  
    \end{bmatrix}
    \otimes\mathbf{1}_{2^{d-1}}\otimes
    \begin{bmatrix}
      1 \\ -1  
    \end{bmatrix}-\begin{bmatrix}
      1 \\ -1  
    \end{bmatrix}\otimes\begin{bmatrix}
      1 \\ \mathbf{0}_{2^{d}-1}  
    \end{bmatrix}\right),  $$
    as in Lemma \ref{lem:kerFH2}(c). For $l \geq 2d+1$ we recursively define
    $$U_l = \mathbf{1}_{2^{d-1}} \otimes \begin{bmatrix}
        1 \\ 0
    \end{bmatrix} \otimes U_{l-d}$$
    and by Lemmas \ref{lem:kerFH} and \ref{lem:kerFH2}(a) we have $U_l \in \ker H(2^l,2\zeta)$.
    
    Let $z(U)$ denote the number of trailing zeros in a vector $U$. We have 
    $$z(U_l)=2^{l-d} -\begin{cases}
        2 &\text{if } d+1 \leq l < 2d, \\
        1 &\text{if } l=2d,
    \end{cases}$$
    and
    $$
    z(U_l) = 2^{l-d}+z(U_{l-d}), \qquad l > 2d.
    $$
    It is straightforward to check that $z(U_l)=z_{d,l}$.
    
    Now, let $s$ be odd and note that the matrix $F(s,t)$ is singular for any $t$, since its entries at positions $(i,j)$ with $i \equiv j \pmod{2}$ are zero.
    Choose any nonzero $V_s \in \ker F(s,2\zeta)$, so that
    $$ U_{l,s} := V_s \otimes U_l \in \ker H(2^l s,2 \zeta).$$
    Observe that $z(U_{l,s}) \geq z(U_l) = z_{d,l}.$
    
    We now check that for $-z_{d,l} \leq i \leq z_{d,l}+1$ we have $\mathcal{H}(2^ls+i,2\zeta)=0$. For $i \leq 0$ we truncate $|i|$ zeros from $U_{l,s}$, obtaining a nonzero vector in  $\ker H(2^ls-|i|,2\zeta)$. For $i > 0$ we can append $i$ zeros to $U_{l,s}$, again yielding an element of $\ker H(2^ls+i,2\zeta)$. To see why this is true, consider the vector
$$ U_{l,s}' = \begin{bmatrix}
    U_{l,s} \\ 0_{2^{l}}
\end{bmatrix} = \begin{bmatrix}
    V_s \\ 0
\end{bmatrix} \otimes U_l. $$
By equality \eqref{eq:Ht_recursion} we get
$$ H(2^{l}(s+1),2\zeta) \:U_{l,s}' = (2\zeta)^l \left( F(s+1,2\zeta)\begin{bmatrix}
    V_s \\ 0
\end{bmatrix}\right)  \otimes  \left( K_{2^l}U_l\right) = \begin{bmatrix}
    0_s \\ C
\end{bmatrix} \otimes  \left( K_{2^l}U_l\right) $$
for some $C \in \C$.
The product $K_{2^l}U_l$ starts with with $z_{d,l}+1$ zeros. Hence, the whole vector starts with $2^l s+z_{d,l}+1$ zeros so $$H(2^{l}s+i,2\zeta) \begin{bmatrix}
    U_{l,s} \\ \mathbf{0}_i
\end{bmatrix} = \mathbf{0}_{2^l s+i}$$
for $i=1,\ldots,z_{d,l}+1$, and the first part of our claim follows.

In the case $d=2$, we have
$$z_{2,l} = \begin{cases}
  \frac{2^l-7}{3} &\text{if } 2\mid l, \\
   \frac{2^l-8}{3} &\text{if } 2\nmid l.
\end{cases}$$
If $l \geq 3$ is even, then $\mathcal{H}(n,-2)=0$ for $2^l-z_{2,l} \leq n \leq 2^l+z_{2,l}+1$, where
\begin{align*}
  2^l-z_{2,l}  &= \frac{2^{l+1}+7}{3} = n_{l-1}+2, \\
  2^l+z_{2,l}+1  &= \frac{2^{l+2}-4}{3} = n_{l}-2
\end{align*}
We get the same equalities for $l$ odd, which proves the second part of the result.
\end{proof}
For each fixed $d \geq 2$ consider the set of indices as in Theorem \ref{thm:H_zero}:
$$A_d := \N_+ \cap \bigcup_{l=d+1}^{\infty} \bigcup_{\substack{s \in \N \\ s \text{ odd}}} (2^ls-z_{d,l}-1,2^ls+z_{d,l}+1].$$
In particular, we have
$$ A_2 =  \N_+ \setminus \bigcup_{k=0}^{\infty} \{n_k-1,n_k,n_k+1 \}. $$
\bigskip

In the following theorem we bound from above the counting function of the complement of $A_d$. In particular, the set of $n$ such that $H(n,2\zeta) \neq 0$ turns out to have density $0$.

\begin{thm}\label{thm:H_zero2}
 For real $x \geq 1$ we have
 $$  \# \{n \leq x:   H(n,-2) \neq 0 \} \leq 3 \log_2 x. $$
 If $d \geq 3$ and $\zeta^d=1$, then
 $$\# \{n \leq x:   H(n,2\zeta) \neq 0 \} =O(x^{\delta_d}),  \vspace{-3pt}$$ 
 where $\delta_d < 1 + \log_2(1-2^{-d}) < 1$.
\end{thm}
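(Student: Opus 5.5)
The plan is to bound the complement of $A_d$, since Theorem \ref{thm:H_zero} gives $\mathcal{H}(n,2\zeta)=0$ for every $n\in A_d$. That is, we count the $n\le x$ not lying in any interval $(2^ls-z_{d,l}-1,\,2^ls+z_{d,l}+1]$ with $l\ge d+1$ and $s$ odd.

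For $d=2$ this is immediate from the description $A_2=\N_+\setminus\bigcup_k\{n_k-1,n_k,n_k+1\}$. Since $n_k=\lceil 2^{k+2}/3\rceil$, we have $n_k\le x$ only when $k\le \log_2 x-1$ roughly, so there are at most about $\log_2 x$ admissible $k$, each contributing three indices. This gives $3\log_2 x$ after checking the small cases (for instance $n_0=2$) by hand, so that the count comes out as exactly $\le 3\log_2 x$ rather than up to an additive constant.

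For $d\ge 3$ I would describe the complement digitally. For a given $l$, the union over odd $s$ of the intervals covers every $n$ whose residue $n \bmod 2^{l+1}$ lies within distance about $z_{d,l}$ of $2^l$. Now $z_{d,l}\approx 2^{l}\cdot\frac{1}{2^d-1}\cdot 2^{r-d}\cdot(\dots)$, which is of order $2^{l-d}(1+2^{-d}+2^{-2d}+\cdots)$. So an uncovered $n$ must, at every scale $l$, avoid a window around $2^l$ of relative width about $2^{-d}$ in $\mathbb{Z}/2^{l+1}$.

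In binary terms I will use a sufficient covering condition: $n$ is covered at level $l$ whenever bits $l,l-1,\dots,l-d+1$ of $n$ read $1,0,\dots,0$, or $0,1,\dots,1$ (the mirror case below $2^l$, using $2^ls-i$). Modulo carry/boundary effects, which the slack $z_{d,l}\ge 2^{l-d}-2$ should handle, an uncovered $n$ with $m=\lfloor\log_2 n\rfloor$ must then avoid a fixed length-$d$ pattern starting at each position $l\ge d+1$. Partition the top bits into consecutive disjoint blocks of length $d$. In each block, one specific pattern (say $10\cdots0$ aligned at the block's top position) is forbidden, leaving $2^d-1$ choices out of $2^d$. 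With about $m/d$ blocks, the count of uncovered $n<2^{m+1}$ is at most $2^{d+1}\cdot(2^d-1)^{m/d}$, up to a constant. Summing over dyadic ranges gives $O\bigl(x^{\log_2(2^d-1)/d}\bigr)$, and
\[
\frac{\log_2(2^d-1)}{d}=1+\frac{\log_2(1-2^{-d})}{d}.
\]

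This exponent is worse than the claimed $1+\log_2(1-2^{-d})$. To reach the claimed $\delta_d$, I would exploit the window more efficiently: forbid a pattern at every position $l$, not just at block boundaries. The count of binary strings of length $m$ avoiding $10^{d-1}$ (together with its complementary pattern) at all positions grows like $\lambda^m$, where $\lambda$ is the largest root of the corresponding transfer-matrix or linear recurrence. We would then set $\delta_d=\log_2\lambda$ and verify the inequality $\log_2\lambda<1+\log_2(1-2^{-d})$, i.e. $\lambda<2-2^{1-d}$. Alternatively, a finer choice of forbidden set, where the window of relative width $\approx 2^{-d}$ removes a proportion $2^{-d}$ of all continuations at each step, gives the factor $(1-2^{-d})$ per bit directly.

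The main obstacle is this last step. We must make the per-position forbidding rigorous despite overlaps between successive windows and carries at the window edges. We must also confirm that the resulting growth rate really beats $2(1-2^{-d})$. I would first try a one-step recursion on the count $c_m$ of uncovered residues mod $2^m$: show $c_{m+1}\le 2c_m-(\text{number forbidden at level } m)$ with the forbidden number at least $2^{-d}c_{m-d}$-ish. I would then bound $c_m$ via the characteristic polynomial of that recurrence.
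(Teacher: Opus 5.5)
Your $d=2$ case is fine; as in the paper, it reads the complement of $A_2$ off Theorem~\ref{thm:H_zero}. Bounding the complement of $A_d$ is also the right strategy for $d\ge 3$. The $d\ge3$ argument, however, has a genuine gap.

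\textbf{The covering condition is off by one bit.} The level-$l$ window of $A_d$ has half-width $z_{d,l}+1\approx 2^{l-d}\cdot\frac{2^d}{2^d-1}$. The $d$-bit word $1\,0^{d-1}$ in positions $l,\dots,l-d+1$ only gives $n\bmod 2^{l+1}\in[2^l,2^l+2^{l-d+1})$, a window almost twice as wide. This is not a boundary effect that the slack $z_{d,l}\ge 2^{l-d}-2$ can absorb. Take $d=3$ and $76=(1001100)_2$. Its bits $6,5,4$ read $100$, yet the level-$6$ window is $(56,72]$ and no level $l\ge 4$ covers $76$, so $76\notin A_3$. More strongly, every $n$ with no three equal consecutive binary digits lies outside $A_3$. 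Hence the complement of $A_3$ has $\gg x^{\log_2\varphi}$ elements up to $x$, with $\varphi=\frac{1+\sqrt5}{2}$. Avoiding your patterns $100$ and $011$ at all positions would leave only $O(\log x)$ of them, so the bound your plan would produce is false. The words that actually force coverage at level $l$, up to boundary cases, are the $(d+1)$-bit words $1\,0^{d}$ and $0\,1^{d}$.

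\textbf{The decisive step is left open, and your suggested routes fail.} You need a growth rate strictly below $2-2^{1-d}$. Removing a proportion $2^{-d}$ per bit gives at best $2(1-2^{-d})$ itself, which is not strict, and the removal is not uniform anyway. A recursion $c_{m+1}\le 2c_m-2^{-d}c_{m-d}$ has dominant root about $2-2^{-2d}$, which exceeds $2-2^{1-d}$.

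\textbf{The missing idea} is that the number of points removed at a new level is an exact earlier count, not a small fraction of one.
\begin{itemize}
\item The paper shrinks $A_d$ to $\widetilde{A}_d=\bigcup_{l\ge 2d}\bigcup_{s\text{ odd}}(2^ls-2^{l-d},2^ls+2^{l-d}]$. This is legitimate because $z_{d,l}+1\ge 2^{l-d}$ for $l\ge 2d$.
\item It then uses the reflection $x\mapsto 2^k-x$. Up to endpoints, $\widetilde{A}_d\cap(0,2^k]$ is the union of $\widetilde{A}_d\cap(0,2^{k-1}]$, its mirror image, and $(2^k-2^{k-d},2^k]$. The last two overlap in a set of the same size as $\widetilde{A}_d\cap(0,2^{k-d}]$.
\item Let $b_k$ be the number of $n\in(0,2^k]$ outside $\widetilde{A}_d$. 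The above gives exactly $b_k=2b_{k-1}-b_{k-d}$.
\item The characteristic polynomial is $x^d-2x^{d-1}+1=(x-1)(x^{d-1}-x^{d-2}-\cdots-1)$. Its dominant root $R$ satisfies $R<2-2^{1-d}$, because the polynomial is increasing beyond $2(d-1)/d$ and equals $1-(1-2^{-d})^{d-1}>0$ at $2-2^{1-d}$.
\end{itemize}

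Your pattern idea can be repaired to give the same exponent. With the words $1\,0^d$ and $0\,1^d$, an uncovered $n$ has all binary runs of length at most $d-1$, apart from $O(d)$ low-order bits. Such strings grow like $R^m$ with the same $R$, the $(d-1)$-bonacci constant. That count, and the comparison $R<2-2^{1-d}$, are exactly what your proposal leaves undone.
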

\begin{proof}
The claim for $t=-2$ follows directly from the second part of Theorem \ref{thm:H_zero}, hence fix $d \geq 3$.
Instead of $A_d$, we consider the simpler set
$$\widetilde{A}_d:=\bigcup_{l=2d}^{\infty} \bigcup_{\substack{s \in \N \\ s \text{ odd}}} (2^ls-2^{l-d},2^ls+2^{l-d}],  $$
which satisfies $\N_+ \cap \widetilde{A}_d \subset A$.

For  $k \in \N$ let $\widetilde{A}_{d,k} :=\widetilde{A}_d \cap (0,2^k]$. 
We are going to find a recurrence relation for the cardinalities $a_k := \# (\widetilde{A}_{d,k} \cap \N_+)$, which can be conveniently expressed as the Lebesgue measure $\lambda(\widetilde{A}_{d,k})$.
For $k \geq 3d$ we have 
$$  \widetilde{A}_{d,k} = \widetilde{A}_{d,k-1} \cup (2^k - \widetilde{A}_{d,k-1}) \cup (2^k-2^{k-d}, 2^k],$$
up to a finite set (consisting of endpoints of the intervals comprising $2^k - \widetilde{A}_{d,k-1}$).
The second and third set have a nontrivial intersection of Lebesgue measure
$$ \lambda((2^k - \widetilde{A}_{d,k-1}) \cap (2^k-2^{k-d}, 2^k]) = \lambda(-\widetilde{A}_{d,k-1}\cap (-2^{k-d},0]) = a_{k-d}.$$
Hence, for $k \geq 3d$ we have the recurrence relation
$$ a_k = 2a_{k-1}+2^{k-d}-a_{k-d}.$$
Letting $b_k = 2^k-a_k = \lambda((0,2^k]\setminus \widetilde{A}_{d,k})$, this can be rewritten as
$$ b_k = 2b_{k-1}-b_{k-d}. $$
The characteristic polynomial of this recurrence relation is
$$ B(x) = x^d - 2x^{d-1} + 1 = (x-1)(x^{d-1} - x^{d-2} - \cdots - x - 1). $$
Now, \cite[Lemma 3.6]{W}  implies that the second factor has a unique real root $R$ lying in the interval $(1, 2)$, while all other complex roots have absolute value less than $1$. Moreover, we have $R < 2-2^{1-d}$, since    $B$ is increasing in the interval $(\frac{2(d-1)}{d},+\infty)$ containing $2-2^{1-d}$, and $B(2-2^{1-d})>0$.
Letting $\delta_d = \log_2 R$, we get $b_k = O(2^{k \delta})$.
To finish the proof, let $2^{k-1} \leq x < 2^k$ so that $k= \log_2 x + O(1)$. Then
$$ \# \{n \leq x:   H(n,2\zeta) \neq 0 \} \leq b_k = O(x^{\delta_d}). \qedhere$$
\end{proof}

\begin{rem}
Since $\delta_d<1$, the theorem implies in particular that the set of indices $n\leq x$ for which $H(n,2\zeta)\neq 0$ has sublinear growth. 
\end{rem}

We implemented the recurrence from Proposition~\ref{prop:h_g_recurrence} in Mathematica~\cite{Mathematica} to test the vanishing pattern predicted by Theorem~\ref{thm:H_zero}.  In particular, for all $2 \leq n\leq 1000$ and for primitive roots of unity $\zeta$ of orders $d=2,3,4,5,7$, the set of indices such that $\mathcal{H}(n,2\zeta) = 0$ is precisely $A_d\cap \{1,\ldots,1000\}$. This suggests the following conjecture. 

\begin{conj}\label{conj:Ad}
Let $d \in \N_{\geq 2}$, and let $\zeta$ be a primitive $d$th root of unity.  Then for
every $n \in  \N_{\geq 2}$, we have
$$
    \mathcal{H}(n,2\zeta)=0
$$
if and only if $n\in A_d$. 

In particular, $\mathcal{H}(n,-2)=0$ if and only if $n \neq n_k-1,n_k,n_k+1$.

\end{conj}

The ``hard part'' of the conjecture is to prove that the determinants $\mathcal{H}(n,2\zeta)$ do not vanish outside of $A_d$. This is already known in the special case $n_k = \lceil 2^{k+2}/3 \rceil$, where the formula from Theorem \ref{thm:nice_formula} shows that the only roots of $\mathcal{H}(n_k,t)$ are $0,2$ and $(k+1)$st roots of unity. However, such explicit characterizations seem rare. We now illustrate a possible different approach, which does not rely on computing the determinants directly.

\begin{lem} \label{lem:H_nonsingular}
Let $n \in \N_+$ and assume that $\mathbf{1}_{n}$ is in the image of $H_\mathbf{w}(n)$. If $\mathcal{G}_\mathbf{w}(n) \neq 0$, then $\mathcal{H}_\mathbf{w}(n) \neq 0$.
\end{lem}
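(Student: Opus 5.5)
The plan is to prove the contrapositive directly: assume $\mathcal{H}_\mathbf{w}(n)=0$ and produce a nonzero vector in $\ker G_\mathbf{w}(n)$, which forces $\mathcal{G}_\mathbf{w}(n)=0$. The link between the two matrices is that $d_\mathbf{w}(m)=s_\mathbf{w}(m+1)-s_\mathbf{w}(m)$. Hence $G_\mathbf{w}(n)$ is a ``discrete derivative'' of $H_\mathbf{w}(n)$ along the antidiagonals, and a summation-by-parts (partial sums) transformation should carry kernel vectors of $H_\mathbf{w}(n)$ to kernel vectors of $G_\mathbf{w}(n)$.

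\emph{Step 1: a kernel vector of $H_\mathbf{w}(n)$ has zero coordinate sum.} Let $v=(v_0,\ldots,v_{n-1})^T\neq 0$ satisfy $H_\mathbf{w}(n)v=\mathbf{0}_n$. Write $\mathbf{1}_n=H_\mathbf{w}(n)x$, which is possible by hypothesis. Since $H_\mathbf{w}(n)$ is symmetric (as a bilinear form, with no complex conjugation involved), we get
$$\mathbf{1}_n^Tv=x^TH_\mathbf{w}(n)^Tv=x^TH_\mathbf{w}(n)v=0.$$
This is the same orthogonality trick used in the proof of Lemma \ref{lem:kerFH}.

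\emph{Step 2: partial sums.} Define $w_j=\sum_{m=0}^{j}v_m$ for $0\le j\le n-1$, and set $w_{-1}=0$. By Step 1, $w_{n-1}=0$. Moreover $w\neq 0$, because $v_j=w_j-w_{j-1}$.

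\emph{Step 3: $w$ lies in the kernel of $G_\mathbf{w}(n)$.} For each row $i$, split the sum and shift the index in the first part:
$$\sum_{j=0}^{n-1}\bigl(s_\mathbf{w}(i+j+1)-s_\mathbf{w}(i+j)\bigr)w_j=\sum_{j=1}^{n}s_\mathbf{w}(i+j)w_{j-1}-\sum_{j=0}^{n-1}s_\mathbf{w}(i+j)w_j .$$
Collecting terms, the right-hand side equals
$$s_\mathbf{w}(i+n)\,w_{n-1}-\sum_{j=0}^{n-1}s_\mathbf{w}(i+j)\,(w_j-w_{j-1}) = s_\mathbf{w}(i+n)\,w_{n-1}-\bigl(H_\mathbf{w}(n)v\bigr)_i .$$
Both terms vanish: the first because $w_{n-1}=0$, the second because $H_\mathbf{w}(n)v=\mathbf{0}_n$. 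Hence $G_\mathbf{w}(n)w=\mathbf{0}_n$ with $w\neq0$, so $\mathcal{G}_\mathbf{w}(n)=0$, which is the contrapositive of the claim.

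The only delicate point is the boundary term $s_\mathbf{w}(i+n)w_{n-1}$ produced by the summation by parts. This is exactly where the hypothesis that $\mathbf{1}_n$ lies in the image of $H_\mathbf{w}(n)$ is needed, through Step 1. The remaining steps are routine index manipulations.
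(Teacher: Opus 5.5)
Your proof is correct and is essentially the paper's argument: your summation by parts is the transpose of the paper's identity $\mathbf{1}_n R_n - H_\mathbf{w}(n) = T\,G_\mathbf{w}(n)$, where $T$ is the upper triangular all-ones matrix and $R_n=[s_\mathbf{w}(n),\ldots,s_\mathbf{w}(2n-1)]$. You kill the rank-one boundary term on a kernel vector via $\mathbf{1}_n^Tv=0$, whereas the paper substitutes $\mathbf{1}_n=H_\mathbf{w}(n)X$ and takes determinants, which additionally gives $\mathcal{G}_\mathbf{w}(n)=(-1)^{n-1}(R_nX-1)\mathcal{H}_\mathbf{w}(n)$; one cosmetic point is that you should rename your partial sums, since $w_j$ already denotes the weights defining $s_\mathbf{w}$.
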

\begin{proof}
Let $X \in \C^n$ be such that $H_\mathbf{w}(n) X = \mathbf{1}_{n}.$ For $m \geq 0$ consider row vectors $R_m = [s_\mathbf{w}(n), \ldots, s_\mathbf{w}(m+n-1)]$. In particular, for $m < n$ this is the $m$th row of $H_\mathbf{w}(n)$.
Letting $I_n$ denote the $n \times n$ identity matrix, we have
$$ H_\mathbf{w}(n)(X R_n-I_n) = \mathbf{1}_n R_n- H_\mathbf{w}(n) = \begin{bmatrix}
    R_n-R_0\\
    R_n-R_1\\
    \vdots \\
    R_n-R_{n-1}
\end{bmatrix} = \begin{bmatrix}
    1 &\cdots &  \cdots & 1 \\
    0 & \ddots &  & \vdots \\
    \vdots & \ddots & \ddots & \vdots \\
    0 & \cdots & 0 & 1
\end{bmatrix} G_\mathbf{w}(n),  $$
and the result follows.
\end{proof}

\begin{rem}
From the proof, we can derive the relation
$$(-1)^{n-1}(R_n X-1)\mathcal{H}_\mathbf{w}(n) = \mathcal{G}_\mathbf{w}(n). $$
Indeed, the eigenvalues of $X R_n$ are: $0$ with multiplicity $n-1$, and $R_n X$. Hence, $\det (X R_n-I_n) = (-1)^{n-1}(R_n X-1)$, and the claim follows.
\end{rem}

We now apply the lemma to exhibit a family of indices different from $n_k$, where $\mathcal{H}(n,2\zeta) \neq 0$.

\begin{thm} \label{thm:H_nonzero}
Let $d \in  \N_{\geq 2}$ and let $\zeta$ be a primitive root of unity of order $d$. Then for any $q \geq 1 , r \in \{0,1,\ldots,d-1\}$,
and $$ n = 2^{qd+r}+2^{(q-1)d+r}+\cdots + 2^{d+r} + 2^r = 2^r \frac{2^{(q+1)d}-1}{2^d-1}, $$
we have
$$  \mathcal{H}(n,2\zeta) \neq 0. $$
\end{thm}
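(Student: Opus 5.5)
Since the text just before the statement introduces Lemma~\ref{lem:H_nonsingular}, I will use it with $\mathbf{w}=(t^j)_{j\ge 0}$ and $t=2\zeta$. Two things must be checked for $n = 2^r(2^{(q+1)d}-1)/(2^d-1)$: that $\mathcal{G}(n,2\zeta)\neq 0$, and that $\mathbf{1}_n$ lies in the image of $H(n,2\zeta)$.

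\textbf{Step 1: $\mathcal{G}(n,2\zeta)\ne0$.} The binary expansion of $n$ is $(1\,0^{d-1})^{q}\,1\,0^{r}$. In the notation of Theorem~\ref{thm:det_G} this gives $l_1=r$, and every other run has length $1$ (runs of ones) or $d-1$ (runs of zeros). Hence $l=\max_i l_i\le \max(r,d-1)\le d-1$; the case $d=2$ is included, since then all runs have length at most $1$. By Corollary~\ref{cor:det_G(n,t)}, the nonzero roots of $\mathcal{G}(n,t)$ are exactly $2\xi$ with $\xi$ a root of unity of order at most $l<d$. As $\zeta$ is primitive of order $d$, $2\zeta$ is not a root, so $\mathcal{G}(n,2\zeta)\neq0$. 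This step should be routine.

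\textbf{Step 2: $\mathbf{1}_n\in\operatorname{im}H(n,2\zeta)$.} This is the main obstacle. Since $H(n,2\zeta)$ is symmetric (complex symmetric, not Hermitian), its image is the annihilator of $\ker H(n,2\zeta)$ under the bilinear pairing $x^Ty$. So it suffices to show that every $V\in\ker H(n,2\zeta)$ satisfies $\mathbf{1}_n^TV=0$. I would try to mimic the proof of Lemma~\ref{lem:kerFH}: find an explicit $X$ with $H(n,2\zeta)X=\mathbf{1}_n$, in the spirit of Lemma~\ref{lem:alternating_sum}.

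For $n\ge 2^d$ the natural candidate is $X=c\,(\mathbf{1}_{2^{d-1}}\otimes[-1,1]^T)$, padded with zeros, with $c=2^{1-d}$. Lemma~\ref{lem:alternating_sum} gives each component as $2^{d-1}$, but only for row indices $i\le 2^d$. Extending it to all rows $i<n$ requires the alternating sum $\sum_j\bigl(S(i+2j+1,t)-S(i+2j,t)\bigr)$ over a window of length $2^d$ starting at odd $i$. That sum equals $1-$(correction terms) and involves $S(i+2^d,t)-S(i+1,t)$, which in general is not constant. The cleaner approach is to replace $X$ by a combination supported on positions adapted to the structure of $n$, and to verify $H(n,2\zeta)X=\mathbf{1}_n$ by induction on $q$ using \eqref{eq:Ht_recursion}.

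Concretely, let $n_q=2^r(2^{(q+1)d}-1)/(2^d-1)$. Then $n_q=2^d n_{q-1}+2^r$, and the top block of size $2^dn_{q-1}$ decomposes via \eqref{eq:Ht_recursion} with $k=d$. I would look for $X_q$ of the form $X_{q-1}'\otimes(\text{a vector of length }2^d)$ plus a correction in the last $2^r$ coordinates. The identity $(2\zeta)^d=2^d$ makes the factor $t^d$ behave like the ordinary factor $2^d$, which should let the inductive step close. The base case $q=0$, that is $n=2^r$ with $r<d$, I would check directly: there $H(2^r,2\zeta)$ only sees the weights $1,t,\dots,t^{r}$.

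If the explicit preimage resists, the fallback is the Remark after Lemma~\ref{lem:H_nonsingular} read in reverse. Suppose $\mathcal{H}(n,2\zeta)=0$ and take $V\in\ker H(n,2\zeta)$. The shift structure $H(n)-\mathbf{1}R_n=-T\,G(n)$ from that proof gives $T\,G(n,2\zeta)V=(R_nV)\mathbf{1}_n$, where $T$ is the invertible upper-triangular matrix appearing there. If $R_nV=0$ this contradicts $\mathcal{G}(n,2\zeta)\ne0$. The remaining case $R_nV\neq0$ would have to be excluded using the specific shape of $n$. In either approach, establishing this image condition is the crux of the proof.
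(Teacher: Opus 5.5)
Your Step 1 is correct and matches the paper's first step. All runs in $(10^{d-1})^q10^r$ have length at most $d-1$, so Corollary~\ref{cor:det_G(n,t)} gives $\mathcal{G}(n,2\zeta)\neq0$, and Lemma~\ref{lem:H_nonsingular} reduces the theorem to showing $\mathbf{1}_n\in\operatorname{im}H(n,2\zeta)$.

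\textbf{The gap is Step 2, which carries the whole content of the theorem.} Your inductive ansatz $X_q=X'_{q-1}\otimes v$ plus a correction in the last $2^r$ coordinates is never specified. You do not say what $v$ or the correction is. Nothing controls the term $(2\zeta)^dF(n_{q-1},2\zeta)X'_{q-1}\otimes K_{2^d}v$ that \eqref{eq:Ht_recursion} produces, nor the $2^r$ border rows and columns of $H(n_q,2\zeta)$ that your decomposition of the top $2^dn_{q-1}$ block does not cover. The fallback reduces nothing either. Since $T$ and $G(n,2\zeta)$ are invertible, $TG(n,2\zeta)V=(R_nV)\mathbf{1}_n$ already forces $R_nV\neq0$ for every nonzero $V\in\ker H(n,2\zeta)$. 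So the case you leave open is the entire problem.

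\textbf{The missing idea} is to build the preimage inside a power-of-two matrix and reach the index $n$ through trailing zeros, not a border correction. Put $l=qd+r$. The paper constructs $V_l\in\C^{2^l}$ with $H(2^l,2\zeta)V_l=\alpha_l\mathbf{1}_{2^l}$, $\alpha_l\neq0$ and $\mathbf{1}_{2^l}^TV_l=0$:
\begin{itemize}
\item $V_d=\mathbf{1}_{2^{d-1}}\otimes[1,-1]^T$, using Lemma~\ref{lem:alternating_sum};
\item $V_{d+r}=V_d\otimes[1,\mathbf{0}_{2^r-1}]^T$;
\item $V_l=(\mathbf{1}_{2^{d-1}}\otimes[1,0]^T)\otimes V_{l-d}$ for $l\geq 2d$.
\end{itemize}
At each step one term of \eqref{eq:Ht_recursion} gives a multiple of $\mathbf{1}$. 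The other two vanish because $\mathbf{1}^TV=0$, because $K_{2^r}[1,\mathbf{0}_{2^r-1}]^T=\mathbf{0}$, or by Lemma~\ref{lem:kerFH2}(a). Note that the $2^d$-periodic factor sits in the coarse (outer) Kronecker position and the recursion runs in the fine one, the reverse of your ansatz.

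These vectors satisfy $z(V_{d+r})=2^r-1$ and $z(V_l)=2^{l-d}+z(V_{l-d})$ for the number of trailing zeros, so $z(V_l)=n-2^l-1$. Finally, \eqref{eq:Ht_recursion} with $(k,n)=(l,2)$ gives
\[
H(2^{l+1},2\zeta)\left(\begin{bmatrix}1\\0\end{bmatrix}\otimes V_l\right)=\alpha_l\mathbf{1}_{2^{l+1}}+(2\zeta)^l\begin{bmatrix}0\\2\zeta-2\end{bmatrix}\otimes K_{2^l}V_l .
\]
The entry $(K_{2^l}V_l)_i$ is the sum of the last $i$ entries of $V_l$, so it vanishes for $i\leq z(V_l)$. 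Hence the first $2^l+z(V_l)+1=n$ entries all equal $\alpha_l$. That is, $H(n,2\zeta)$ sends $V_l$ padded by $n-2^l$ zeros to $\alpha_l\mathbf{1}_n$. This is how the failure of Lemma~\ref{lem:alternating_sum} beyond row $2^d$, which you correctly identified, is avoided.
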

\begin{proof}
First, note that the binary expansion of $n$ is $(10^{d-1})^k10^r$, which means that $\mathcal{G}(n,2\zeta)\neq 0$ by Corollary  \ref{cor:det_G(n,t)}. By Lemma \ref{lem:H_nonsingular} it remains to show that $\mathbf{1}_n$ lies in the image of $H(n,2\zeta)$.

We first construct a sequence of vectors $(V_l)_{l \geq d}$ such that $H(2^l,2\zeta)V_l = \alpha_l \mathbf{1}_{2^d}$
for some $\alpha_l \in \C$. For $l=d$, we may take
$$V_d = \mathbf{1_{2^{d-1}}} \otimes \begin{bmatrix}
    1 \\ -1
\end{bmatrix},$$
as in the proof of Lemma \ref{lem:kerFH}.
Further, for $l=d+1,\ldots,2d-1$ we put
$$ V_{l} = V_d \otimes \begin{bmatrix}
    1 \\ 0_{2^{l-d}-1}
\end{bmatrix}, $$
which is a valid choice thanks to the equality \eqref{eq:Ht_recursion} applied to $k=l-d, n= 2^d$. Finally, for $l \geq 2d$ we let
$$V_l = \left( \mathbf{1}_{2^{d-1}} \otimes \begin{bmatrix}
    1 \\0
\end{bmatrix}\right) \otimes V_{l-d},$$
where again we use \eqref{eq:Ht_recursion} and Lemma \ref{lem:kerFH2}(a).

We now count the trailing zeros of $V_l$. For $l=d,\ldots, 2l-1$ we have $z(V_l) = 2^{l-d}-1$, whereas for $l \geq 2d$ we get
$$  z(V_l) = 2^{l-d} + z(V_{l-d}). $$
Writing $l=qd+r$ as in the statement, we thus get $z(V_l) = n - 2^l -1$. A similar computation as in the proof of Theorem \ref{thm:H_zero}  shows that
$$ H(2^{l+1},2\zeta) \left(\begin{bmatrix}
    1 \\ 0
\end{bmatrix} \otimes V_l\right) = \begin{bmatrix}
    1 \\ 1
\end{bmatrix} \otimes \alpha_l \mathbf{1}_{2^l} + (2 \zeta)^l \begin{bmatrix}
    0 \\ 2 \zeta-2
\end{bmatrix}  \otimes K_{2^l} V_l. $$
But $K_{2^l} V_l$ has $z(V_l)+1$ leading zeros so the top $2^l+z(V_l)+1=n$ entries in the whole vector are $\alpha_l$. It follows that 
$$H(n,2\zeta)\begin{bmatrix}
    V_l \\ \mathbf{0}_{n-2^l}
\end{bmatrix}= \alpha_l \mathbf{1}_n,$$ as desired.
\end{proof}

\subsection{Vanishing of $\mathcal{H}(n,t)$ for $t \neq 0, 2\zeta$}
For other algebraic numbers $t$ it seems much harder to describe the set of $n$ such that $\mathcal{H}(n,t)=0$. However, as shown in the following result and its corollary, once it is known that $\mathcal{H}(n,t)=0$ for some $n \geq 2$, it is possible to construct infinitely many such indices.

\begin{thm} \label{thm:H_p_q_zero}
Let $t \in \C$ be such that $t \neq 0$ and $t/2$ is not a root of unity. Let $p,q \in \N_+$ be such that $p\geq 3$ is odd and $\mathcal{H}(p,t) = \mathcal{H}(q,t) = 0$. 
Then 
$$\mathcal{H}(2^{m}(p-1)+q,t)=0,$$
where $m \in \N_+$ is such that $2^{m-1} < q \leq 2^m$.
\end{thm}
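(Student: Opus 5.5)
The plan is to construct an explicit nonzero vector in $\ker H(N,t)$ for $N=2^m(p-1)+q$. I would build it as a sum of Kronecker products of short kernel vectors, padded with zeros, and then truncate trailing zeros as in the proof of Theorem \ref{thm:H_zero}.

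\textbf{Set-up.} Let $U\in\ker H(p,t)$ and $Y\in\ker H(q,t)$ be nonzero, and put $W=[Y^T,\mathbf{0}_{2^m-q}^T]^T\in\C^{2^m}$. I apply \eqref{eq:Ht_recursion} with $k=m$ and $n=p$:
$$H(2^mp,t)(U\otimes W)=(J_pU)\otimes H(2^m,t)W+t^m\,H(p,t)U\otimes J_{2^m}W+t^m\,F(p,t)U\otimes K_{2^m}W .$$
The middle term vanishes. For the last term I would compute $F(p,t)U$. Row $i$ of $G(p,t)$ is the difference of rows $i+1$ and $i$ of $H(p+1,t)$, restricted to the first $p$ columns. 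Since $H(p,t)U=0$, this gives
$$F(p,t)U=c\,e_{p-1}-(\mathbf{1}_p^TU)\mathbf{1}_p,\qquad c=[S(p,t),\ldots,S(2p-1,t)]\,U .$$
Next, the matrix $H(2^m,t)-t^mK_{2^m}$ has entries $S(i+j)$ for $i+j<2^m$ and $S(i+j-2^m)$ otherwise. So $H(2^m,t)W$ and $K_{2^m}W$ are controlled on the first $q$ rows by $H(q,t)Y=0$. Only the first $q$ rows of the last block matter after truncation, because the vector has $2^m-q$ trailing zeros.

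\textbf{Role of the hypotheses.} The hypothesis that $t/2$ is not a root of unity (together with $t\neq0$) gives $\mathcal{G}(p,t)\neq0$ by Corollary \ref{cor:det_G(n,t)}. By the Remark after Lemma \ref{lem:H_nonsingular}, $\mathbf{1}_p$ is then not in the image of $H(p,t)$. Since $H(p,t)$ is symmetric, its image is the annihilator of its kernel, so there is a kernel vector $U$ with $\mathbf{1}_p^TU\neq0$. This term $(\mathbf{1}_p^TU)\mathbf{1}_p\otimes(\cdots)$ is what pollutes the blocks $0,\ldots,p-2$. To kill it, I would use oddness of $p$: $F(p,t)$ is singular with an explicit kernel vector $V$ (checkerboard structure, as in the proof of Theorem \ref{thm:H_zero}). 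I would then look for the kernel vector of $H(2^mp,t)$, truncated, in the form
$$X=U\otimes W+V\otimes W'$$
for a suitable $W'\in\C^{2^m}$ with $2^m-q$ trailing zeros. The next step is to choose $W'$ so that
$$(\mathbf{1}_p^TU)\bigl(H(2^m,t)-t^mK\bigr)W+(J_pV)\otimes H(2^m,t)W'+t^mH(p,t)V\otimes J W'=0$$
on all rows except the last $2^m-q$. The natural candidate for $W'$ is a multiple of $W$ or of $K_{2^m}W$, tuned to match the $e_{p-1}$-term involving $c$.

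\textbf{Main obstacle.} The hard step is this cancellation: proving that some combination of $U$, $V$, $W$, $W'$ annihilates every row of blocks $0,\ldots,p-2$, not just the last block. I would first test whether taking $U$ with $\mathbf{1}_p^TU=0$ is possible after modifying by $V$. If not, I would work with the $2\times2$ structure relating $\mathcal H$ and $\widetilde{\mathcal G}$ from Theorem \ref{thm:det_H_recurrence} to find the right correction. The final truncation argument, which removes the trailing $2^m-q$ zeros, I expect to be routine and identical to the one in Theorem \ref{thm:H_zero}.
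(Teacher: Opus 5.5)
\textbf{There is a genuine gap: the ansatz cannot work.} Several of your ingredients are right. Applying \eqref{eq:Ht_recursion} with $k=m$ and $n=p$ is legitimate, the formula $F(p,t)U=c\,e_{p-1}-(\mathbf{1}_p^TU)\mathbf{1}_p$ is correct, and oddness of $p$ does give a nonzero vector in $\ker F(p,t)$. But no vector of the form $U\otimes W+V\otimes W'$ with $V\in\ker F(p,t)$ works in general, and the cancellation you leave open is exactly where it breaks.

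Your claim that $K_{2^m}W$ is controlled on the first $q$ rows by $H(q,t)Y=0$ is false. In fact $(K_{2^m}W)_i=\sum_{j=2^m-i}^{q-1}W_j$, which is in general nonzero for $2^m-q<i<q$.

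Since $G(p,t)V=(\mathbf{1}_p^TV)\mathbf{1}_p$, the numbers $(H(p,t)V)_i$ form an arithmetic progression with difference $\mathbf{1}_p^TV$. So $V\otimes W'$ contributes to block $i$ the vector $(\mathbf{1}_p^TV)H(2^m,t)W'+t^m(\mathbf{1}_{2^m}^TW')(H(p,t)V)_i\mathbf{1}_{2^m}$. Consequently, block $p-1$ minus block $p-2$ of $H(2^mp,t)(U\otimes W+V\otimes W')$ equals $t^mc\,K_{2^m}W+t^m(\mathbf{1}_p^TV)(\mathbf{1}_{2^m}^TW')\mathbf{1}_{2^m}$. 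This difference must vanish on the first $q$ rows. Row $0$ forces $(\mathbf{1}_p^TV)(\mathbf{1}_{2^m}^TW')=0$, and then $c\,(K_{2^m}W)_i=0$ for all $i<q$, whatever $W'$ is. Sums of several such correction terms do not help either.

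This already fails in the basic case $t=-1$, $p=q=3$, $m=2$, so $N=11=n_3$. There $U=(1,1,1)^T$ spans $\ker H(3,-1)$, $c=S(3,-1)+S(4,-1)+S(5,-1)=3$, $W=(1,1,1,0)^T$ and $K_4W=(0,0,1,2)^T$. Your fallback to the $2\times 2$ recursion is not an argument, and you never show that the final vector is nonzero.

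\textbf{The missing idea.} The partner of $U$ must not be $W$ itself but an auxiliary vector $\widetilde W\in\C^{2^m}$ defined by $H(2^m,t)W=t^mK_{2^m}\widetilde W$ and $\mathbf{1}_{2^m}^T\widetilde W=\mathbf{1}_{2^m}^TW$. This is solvable because the image of $K_{2^m}$ consists exactly of the vectors with vanishing $0$th entry, and $(H(2^m,t)W)_0=0$. Because the first $q$ entries of $H(2^m,t)W$ vanish, $\widetilde W$ ends with $q-1\geq 2^m-q$ zeros, so $K_{2^m}\widetilde W$ vanishes on its first $q$ rows.

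Next, normalize $V\in\ker F(p,t)$ so that $\mathbf{1}_p^TV=\mathbf{1}_p^TU$. This is where $\mathcal{G}(p,t)\neq 0$ is genuinely needed. If every vector in $\ker F(p,t)$ had zero coordinate sum, symmetry would give $\mathbf{1}_p\in\operatorname{im}F(p,t)$, and then $G(p,t)=F(p,t)+J_p$ would be singular, contradicting Corollary~\ref{cor:det_G(n,t)}.

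Now set $Z=U\otimes\widetilde W+V\otimes(W-\widetilde W)$. Using $\mathbf{1}_{2^m}^T(W-\widetilde W)=0$, the three terms of \eqref{eq:Ht_recursion} collapse to $H(2^mp,t)Z=t^mc\,e_{p-1}\otimes K_{2^m}\widetilde W$. This vanishes on the first $2^m(p-1)+q$ rows, and $Z$ ends with $2^m-q$ zeros, so truncation finishes the argument.

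This is the paper's vector $A\otimes W+B\otimes X$ with $A=V$, $B=U-V$, $X=\widetilde W$. The paper works inside $H(2^{m+n},t)$ with $2^n\geq p$; using $n=p$ directly, as you do, is a harmless simplification. Finally, your choice $\mathbf{1}_p^TU\neq 0$ makes nonvanishing immediate: $(\mathbf{1}_p^T\otimes I_{2^m})Z=(\mathbf{1}_p^TU)W\neq 0$.
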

\begin{proof}
Let $n \in \N$ be such that $p \leq 2^n$.
    There exist nonzero column vectors $U' \in \C^p$ and $W' \in \C^q$ such that $H(p,t) U' = 0$ and $H(q,t)W' = 0$. We pad $U'$ and $W'$ with trailing zeros, obtaining $U \in \C^{2^n}$ and $W \in \C^{2^m}$. 

    For a vector $V \in \C^k$ we will now write $\Sigma V = \mathbf{1}_k^T V$ to denote the sum of its components.
    Let $V \in \C^{2^n}$ and $X \in \C^{2^m}$ be such that
\begin{equation} \label{eq:UVWX}
\begin{aligned} 
        H(2^n,t) U &= t^n K_{2^n} V, \\
        H(2^m,t) W &= t^m K_{2^m} X, \\
        \Sigma U &= \Sigma V, \\
        \Sigma W  &= \Sigma X.
\end{aligned}
\end{equation}
This is possible since the first row of the matrices $K_l$ is zero and the remaining rows are linearly independent.

Our goal now is to construct two vectors $Y,Z \in \C^{2^{m+n}}$ satisfying 
\begin{equation} \label{eq:HY_KZ}
    H(2^{m+n},t)Y = t^{m+n} K_{2^{m+n}}Z,
\end{equation}
such that $Y$ ends with and $Z$ begins with sufficiently many zeros, so that after truncating we get a nonzero vector $Y' \in \operatorname{ker} H(2^{m}(p-1)+q,t)$. They will be of the form
 \begin{align*}
     Y &= A \otimes W+ B \otimes X, \\
     Z &= C \otimes W + D \otimes X,
 \end{align*}
for some $A,B,C,D \in \C^{2^n}$ satisfying
\begin{equation} \label{eq:ABCD}
\begin{aligned} 
    A+B &= U, \\
    C+D &= V, \\
    \Sigma B &=0, \\
    F(2^n,t) A &= t^n L_{2^n} C.
\end{aligned}
\end{equation}

We first choose $A$ so that it ends with many zeros, $C$ resulting from \eqref{eq:ABCD} begins with many zeros, and  $\Sigma A = \Sigma U$.
  We claim that it is possible to choose a vector $A'$ (which will constitute the first $p$ components of $A$) from the kernel of $F(p,t)$, in such a way that $\Sigma A' = \Sigma U$. If $\Sigma U = 0$, we simply take $A' = \mathbf{0}_p$. Otherwise, if $\Sigma U \neq 0$, we use the fact that the matrix $F(p,t)$ is singular for $p$ odd. Suppose by contradiction that $\Sigma A' = 0$ for all $A' \in \operatorname{ker} F(p,t)$. Because the matrix $F(p,t)$ is symmetric, $\mathbf{1}_p$ must lie in its image, and thus  $J_p = F(p,t) Q$ for a $p \times p$ matrix $Q$. Consequently, the matrix $G(p,t) = F(p,t) + J_p$ is also singular. But this cannot be the case, as by Corollary \ref{cor:det_G(n,t)} the roots of $\mathcal{G}(p,t)$ are $t=0$ and $t=2\zeta$, where $\zeta$ is a root of unity.  Hence, there exists $A' \in \operatorname{ker} F(p,t)$ wih $\Sigma A'$ nonzero and after scaling we can assume $\Sigma A' = \Sigma U$. Then $A \in \C^{2^n}$ is obtained by padding $A'$ with $2^n-p$ trailing zeros, so that $\Sigma A = \Sigma U$ holds. The remaining vectors $B,C,D$ are uniquely determined by the imposed conditions \eqref{eq:ABCD}.

  Next, we show that the relations \eqref{eq:UVWX} and \eqref{eq:ABCD} imply equality \eqref{eq:HY_KZ}. We expand $H(2^{m+n},t)Y$ using Proposition \ref{prop:H_recursion}, obtaining three products on the left-hand side. The first is
\begin{align*}
    (J_{2^n} \otimes H(2^m,t) ) Y &= J_{2^n} A \otimes H(2^m,t) W + J_{2^n} B \otimes H(2^m,t) X \\
    &= t^m (\Sigma U) \mathbf{1}_{2^n} \otimes K_{2^m} X.
\end{align*}  
The second product is
\begin{align*}
    (t^m H(2^n,t) \otimes J_{2^m} ) Y &= t^m H(2^n,t) A \otimes J_{2^m} W + t^m H(2^n,t) B \otimes J_{2^m} X \\
    &= t^{m+n} (\Sigma W) K_{2^n} V \otimes \mathbf{1}_{2^m}.
\end{align*} 
The final product on the left-hand side is
\begin{align*}
t^m (F(2^n,t)\otimes K_{2^m})) Y&=
    t^m F(2^n,t) A \otimes K_{2^m}W + t^m F(2^n,t) B \otimes K_{2^m}X \\ 
    &=t^{m+n}L_{2^n} C \otimes K_{2^m}W +  t^m F(2^n,t) B \otimes K_{2^m}X
\end{align*} 
The right-hand side of \eqref{eq:HY_KZ} is
\begin{align*}
   &t^{m+n}\bigl(L_{2^n}\otimes K_{2^m}+K_{2^n} \otimes J_{2^m}\bigr)Z = \\
   &t^{m+n} \bigl(L_{2^n}C \otimes K_{2^m}W + L_{2^n}D \otimes K_{2^m}X + (\Sigma W) K_{2^n}V \otimes \mathbf{1}_{2^m}\bigr).
\end{align*} 
In order to prove our claim it remains to show that
$$F(2^n,t) B + \mathbf{1}_{2^n} = t^n L_{2^n} D.  $$
After adding $F(2^n,t) A = t^n L_{2^n} C$ to both sides and multiplying by $L_{2^n}$ from the left, this is equivalent to
\begin{equation} \label{eq:V_form}
    t^n  V = L_{2^n} F(2^n,t) U + (\Sigma U) \mathbf{1}_{2^n}.
\end{equation}
Note that, given $U$, the conditions $H(2^n,t) U  = t^n K_{2^n} V$ and $\Sigma U = \Sigma V$ determine $V$ uniquely. Hence, it is enough to check that $V$ as in \eqref{eq:V_form} satisfies them both. The first one follows from the identity
$$ H(2^n,t) = K_{2^n}( L_{2^n} F(2^n,t) + J_{2^n}) + \mathbf{1}_{2^n} \begin{bmatrix}
    S(0,t) & \cdots & S(2^n-1,t)
\end{bmatrix},$$
which in turn is a direct consequence of the definition of $F$.
The second condition is obtained by summing the components of both sides of \eqref{eq:V_form} and using the equality
$$ \mathbf{1}_{2^n}^T F(2^n,t) = (t^n - 2^n) \mathbf{1}_{2^n}^T.  $$

We now count the trailing zeros in $Y$ and the leading zeros in $Z$, where the given amounts are not necessarily maximal.
Both $A$ and $U$ end with $2^n-p$ zeros, hence so does $B=U-A$. It follows that $Y$ ends with $2^n-p$ blocks of zeros having length $2^m$. Additionally, the $(p-1)$st block in $Y$ (numbering from $0$) is a linear combination of $W$ and $X$, and these vectors end with $2^m-q$ and $q-1$ zeros, respectively. Since $q-1 > 2^m-q$, we deduce that $Y$ ends with $2^m(2^n-p) + 2^m-q = 2^{m+n} - (2^{m}(p-1)+q)$ zeros. 
Furthermore, from the relations $F(p,t)A'=\mathbf{0}_p$ and $F(2^n,t) A = t^n L_{2^n} C$ we deduce that $C$ ends with  $p$ zeros. At the same time, $V$ ends with $p-1$ zeros, which means that $D = V-C$ also ends with $p-1$ zeros. Hence, $Z$ ends with $p-1$ blocks of zeros of length $2^m$ and the previous block is a multiple of $X$. Therefore, $Z$ ends with at least $2^m(p-1) + q-1$ zeros. If we let $Y'$ denote the vector $Y$ truncated to the initial $2^{m}(p-1)+q$ components, then by the above considerations \eqref{eq:HY_KZ} implies
$$H(2^{m}(p-1)+q,t) Y' = \mathbf{0}_{2^{m}(p-1)+q}.$$

It remains to show that $Y'$, or equivalently, $Y$ is nonzero. Suppose by contradiction that $Y = A \otimes W+ B \otimes X$ is the zero vector. 

Since $A+B=U \neq \mathbf{0}_{2^n}$, at least one of $A,B$ is non-zero, and thus $X,W$ must be linearly dependent. Because $W$ is non-zero, we can write $X = \alpha W$  for some $\alpha \in \C$, which gives $Y = (A + \alpha B) \otimes W$, and consequently $A + \alpha B = \mathbf{0}_{2^n}$. Then $\Sigma U = 0$, in which case we have chosen $A = \mathbf{0}_{2^n}$ in an earlier step in the proof. It follows that $\alpha=0$, meaning that $X = \mathbf{0}_{2^m}$. By the conditions \eqref{eq:UVWX}, we have $H(2^m,t)W = \mathbf{0}_{2^m} $ and $\Sigma W =\Sigma X = 0$. Since the matrix $H(2^m,t)$ is symmetric, $\mathbf{1}_{2^m}$ belongs to its image. But $\mathcal{G}(n,t) \neq 0$ by Corollary \ref{cor:det_G(n,t)} so Lemma \ref{lem:H_nonsingular} implies $H(2^m,t)$ is non-singular, a contradiction.
\end{proof}

If we take $q=p$ in the theorem, we can construct a sequence of indices such that $\mathcal{H}(p_k,t)=0$.

\begin{cor}\label{cor:nonunity2}
Let $t \in \C$ be such that $t \neq 0$ and $t/2$ is not a root of unity. Let $p \geq 3$ be odd and such that $\mathcal{H}(p,t) = 0$. 
Then for all $k \in \N$ we have
$$\mathcal{H}(p_k,t)=0,$$
where
$$ p_k =  (p-1) \frac{2^{(k+1)m}-1}{2^m-1} +1$$
and $m \in \N_+$ is such that $2^{m-1} < p \leq 2^m$.
\end{cor}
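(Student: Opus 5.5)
The plan is induction on $k$, applying Theorem \ref{thm:H_p_q_zero} each time with the fixed odd index $p$ and with $q=p_{k-1}$. The base case $k=0$ gives $p_0=(p-1)+1=p$, and $\mathcal{H}(p,t)=0$ holds by assumption.

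For the inductive step, assume $\mathcal{H}(p_{k-1},t)=0$. To apply Theorem \ref{thm:H_p_q_zero} with $q=p_{k-1}$ we need three things.

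\begin{enumerate}[label=(\roman*)]
\item The hypotheses on $t$ ($t\neq 0$, $t/2$ not a root of unity) are unchanged.
\item The index $p\geq 3$ is odd, and $\mathcal{H}(p,t)=0$.
\item We must identify the exponent $m'$ with $2^{m'-1}<p_{k-1}\le 2^{m'}$ that the theorem uses.
\end{enumerate}

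The theorem then yields $\mathcal{H}(2^{m'}(p-1)+p_{k-1},t)=0$. So the whole argument reduces to checking that this index equals $p_k$. This requires $m'=km$, because
$$p_k = 2^{km}(p-1) + p_{k-1},$$
which follows directly from the geometric sum formula: $p_k-1=(p-1)(1+2^m+\dots+2^{km})$.

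The key step is therefore the size estimate $2^{km-1}<p_{k-1}\le 2^{km}$. I would verify it as follows.

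\begin{itemize}
\item For $k=1$, we have $p_0=p$, and $2^{m-1}<p\le 2^m$ is the definition of $m$.
\item For $k\ge 2$, write $p_{k-1}=(p-1)\bigl(2^{(k-1)m}+2^{(k-2)m}+\dots+1\bigr)+1$.
\item Upper bound: since $p-1\le 2^m-1$, we get $p_{k-1}\le (2^m-1)\frac{2^{km}-1}{2^m-1}+1=2^{km}$.
\item Lower bound: since $p-1\ge 2^{m-1}$ (because $p>2^{m-1}$), we get $p_{k-1}\ge 2^{m-1}\cdot 2^{(k-1)m}+1>2^{km-1}$.
\end{itemize}

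This gives $m'=km$, so $p_k=2^{m'}(p-1)+p_{k-1}$ and the induction closes.

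I expect no real obstacle here. The only point needing care is confirming the exact power of $2$ attached to $q=p_{k-1}$, including the boundary case $p=2^m$. That case cannot occur anyway, since $p$ is odd and $p\ge 3$, but the bounds above hold regardless.
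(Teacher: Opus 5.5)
Your proof is correct. It uses the same tool as the paper, namely iterating Theorem \ref{thm:H_p_q_zero}, but with the two slots of the theorem filled the other way round.

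The paper keeps $q=p$ fixed and puts the growing index into the odd slot. It applies the theorem to the pair $(p_k,p)$, obtaining the recursion $p_{k+1}=2^m(p_k-1)+p$. Because $q=p$ never changes, the exponent $m$ stays the same throughout, so no size estimate is needed. The only requirement is that each $p_k$ is odd and at least $3$, which holds automatically since $2^m(p_k-1)+p$ is odd.

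You instead keep $p$ in the odd slot and take $q=p_{k-1}$, giving the recursion $p_k=p_{k-1}+2^{km}(p-1)$. This means the exponent changes at each step. You therefore need the bound $2^{km-1}<p_{k-1}\le 2^{km}$, which you verified correctly. In exchange, no parity check on the iterates is needed.

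Both recursions produce the same closed form for $p_k$, so the two arguments are equally valid.
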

\begin{proof}
By Theorem \ref{thm:H_p_q_zero} we have $\mathcal{H}(p'_k,t)=0$, where 
$p'_0 = p$ and $p'_{k+1} = 2^m(p'_k-1)+p$. It is easy to check by induction that $p'_k = p_k$.
\end{proof}

For example, if we take $t = -1$ and $p=3,2^m=4$, then we retrieve the subsequence $p_k = n_{2k+1}$, where $n_k$ is defined by \eqref{eq:n_k}.

\section{Further questions and problems} \label{sec:Questions}

We conclude with several additional questions and problems that are naturally suggested by the results of this paper.

\begin{prob}
Let $t\in\C$ be fixed and such that $\mathcal{H}(n,t)=0$ for some $n \in \N_{\geq 2}$. Describe the structure of the set
\[
\{n\ge 0 : \mathcal{H}(n,t)=0\}.
\]
In particular, is this set automatic?
\end{prob}

In view of the polynomial formula in Theorem \ref{thm:nice_formula} and the vanishing patterns obtained in Section \ref{sec:vanishing}, this is an interesting question especially in the case when $t =\zeta$ and $t=2\zeta$, where $\zeta$ is a root of unity.

\begin{ques}
Do there exist $n\ge 2$ and $t\in\C$ such that $\mathcal{H}(n,t)$ has a multiple root
at $t$, other than $t=0$ and $t=2\zeta$?
\end{ques}

Another interesting problem is further restricting the set of possible roots of the polynomials $(\mathcal{H}(n,t))_{n \geq 2}$. We state two further questions in this direction.

\begin{ques}
Does there exist a uniform bound $|t| \leq M$ satisfied by all roots of $\mathcal{H}(n,t)$ for all $n \geq 2$?
\end{ques}
Experimental computations suggest that the answer is affirmative and the optimal bound is $M=\frac{\sqrt{5}+3}{2}$. Moreover, the value $-\frac{\sqrt{5}+3}{2}$ can apparently be approached from above by real roots of the polynomials $\mathcal{H}(a_k,t)$, where $a_0=5$ and
$$  a_{k+1} = \begin{cases}
    2 a_k &\text{if } k \equiv 0,1 \pmod{4}, \\
    2 a_k-1 &\text{if } k \equiv 2,3 \pmod{4}.
\end{cases} $$

\begin{ques}
Is it true that for every fixed degree $D\ge 1$ there exist only finitely many
algebraic numbers $t\in\C$ of degree at most $D$ such that $\mathcal{H}(n,t)=0$ for
some $n\ge 2$?
\end{ques}

Equivalently, do the polynomials $\mathcal{H}(n,t)$ have only finitely many distinct
irreducible factors of any fixed degree over $\Q$? Our computations suggest that this is indeed the case.

\begin{prob}
Find further natural families of indices $n$ for which $\mathcal{H}(n,t)$ admits a
simple closed formula (for general $t$ as well as for fixed $t=t_{0})$.
\end{prob}

Besides Theorem \ref{thm:nice_formula}, the formulas from Proposition \ref{prop:special_cases} suggest that other such families may exist.

\begin{prob}
Describe more precisely the set of non-vanishing indices for $t=2\zeta$, where
$\zeta$ is a root of unity.
\end{prob}

Theorems \ref{thm:H_zero} and \ref{thm:H_zero2} show that this set is sparse, but its finer structure remains unclear.

A natural direction of further research is to consider more general sequences than those considered in the paper. In particular, the methods of the paper are strongly tied to the binary expansion, but it is natural to ask to what extent they persist in other bases. More precisely, we formulate the following problem.

\begin{prob}
Extend the results of this paper to weighted sum-of-digits functions in base
$b\ge 2$.
\end{prob}

One may also ask for the values of Hankel determinants for a shift of the sequence $(s_{\mathbf{w}}(n))_{n \geq 0}$, as was done in \cite{APWW} in the Thue--Morse case.

\begin{prob}
    Find recursive or explicit formulas for Hankel determinants corresponding to the shifted sequence $(s_{\mathbf{w}}(n+k))_{n \geq 0}$, where $k \in \N$.
\end{prob}

Finally, we formulate the following general problem.

\begin{prob}
Investigate Hankel determinants associated with other regular digital sequences, for instance sequences counting occurrences of a fixed block in the base-$b$ expansion of $n$.
\end{prob}

These sequences share many structural features with weighted sum-of-digits functions and may exhibit similar determinant phenomena.

\bibliographystyle{plain}

\bigskip

\noindent  Bartosz Sobolewski, Jagiellonian University, Faculty of Mathematics and Computer Science, Institute of Mathematics, {\L}ojasiewicza 6, 30 - 348 Krak\'{o}w, Poland\\
e-mail:\;{\tt  bartosz.sobolewski@uj.edu.pl} \vspace{1em}

\noindent  Maciej Ulas, Jagiellonian University, Faculty of Mathematics and Computer Science, Institute of Mathematics, {\L}ojasiewicza 6, 30 - 348 Krak\'{o}w, Poland\\
e-mail:\;{\tt  maciej.ulas@uj.edu.pl}
\end{document}